\newtheorem{thm}{Theorem}[section]
\newtheorem{prop}[thm]{Proposition}
\newtheorem{cor}[thm]{Corollary}
\theoremstyle{definition}
\newtheorem{defn}[thm]{Definition}
\newtheorem{exmpl}[thm]{Example}
\newtheorem{rmk}[thm]{Remark}
\numberwithin{equation}{section}
\newcommand{\Hu}{\mathbb{H}}
\newcommand{\N}{\mathbb{N}}
\newcommand{\R}{\mathbb{R}}
\newcommand{\Q}{\mathbb{Q}}
\newcommand{\Z}{\mathbb{Z}}
\newcommand{\K}{\mathcal{K}}
\newcommand{\M}{\mathcal{M}}
\newcommand{\T}{\mathcal{T}}
\newcommand{\U}{\mathcal{U}}
\newcommand{\ep}{\varepsilon}
\newcommand{\normp}[2]{\left\| #1 \right\|_{p}} 
\newcommand{\C}{\mathfrak{C}_p}
\newcommand{\LC}{\mathfrak{LC}_p}
\newcommand{\SC}{\mathfrak{SC}_p}
\newcommand{\nbd}{\nobreakdash}
\newcommand{\TC}{\Theta\C}
\newcommand{\nin}{{n\in\N}}
\newcommand{\nti}{{n\to\infty}}
\DeclareMathOperator{\cls}{cls}
\newcommand{\meas}{\mathrm{meas}}
\DeclareMathOperator{\supp}{supp}
\begin{document}
\title[Topologies of $L^p_{loc}$ type for Carath\'eodory functions in ODE$\text{s}$]
{Topologies of $L^p_{loc}$ type for Carath\'eodory functions with applications in non-autonomous differential equations}
\author[I.P.~Longo]{Iacopo P. Longo}
\address{Departamento de Matem\'{a}tica Aplicada, Universidad de
Valladolid, Paseo del Cauce 59, 47011 Valladolid, Spain.}
\email[Iacopo P. Longo]{iaclon@wmatem.eis.uva.es}
\email[Sylvia Novo]{sylnov@wmatem.eis.uva.es}
\email[Rafael Obaya]{rafoba@wmatem.eis.uva.es}
\thanks{Partly supported by MINECO/FEDER
under project MTM2015-66330-P and EU Marie-Sk\l odowska-Curie ITN Critical Transitions in
Complex Systems (H2020-MSCA-ITN-2014 643073 CRITICS)}
\author[S.~Novo]{Sylvia Novo}
\author[R.~Obaya]{Rafael Obaya}
\subjclass[2010]{34A34, 37B55, 34A12, 34F05}
\date{}
\begin{abstract}
Metric topological vector spaces of Carath\'eodory functions and topologies of $L^p_{loc}$ type are introduced, depending on a suitable set of moduli of continuity. Theorems of continuous dependence on initial data for the solutions of non-autonomous Carath\'eodory differential equations are proved in such new topological structures. As a consequence, new families of continuous linearized skew-product semiflows are provided in the Carath\'eodory spaces.
\end{abstract}
\keywords{Carath\'eodory functions, non-autonomous Carath\'eodory differential equations, continuous dependence on initial data, linearized skew-product semiflow.}
\maketitle
\section{Introduction}\label{secintro}
In this paper we introduce new topologies of $L^p_{loc}$ type in order to study the behavior of the solutions of non-autonomous Carath\'eodory differential equations. The problem is classic and it was firstly introduced by Miller and Sell~\cite{book:RMGS,paper:RMGS1}. Since then, $L^p_{loc}$ topologies have been employed to investigate non-autonomous linear differential equations (see Bodin and Sacker~\cite{paper:SBRS}, Chow and Leiva~\cite{paper:CHLE} and Siegmund~\cite{paper:SS} among others) but, despite its potential interest, the classic theory has not been conveniently developed in the field of non-linear differential equations.
\par
\vspace{0.05cm}
To this aim, we introduce new dynamical arguments, filling some gaps in the theory and improving its applicability. In particular, we define new topologies and new locally convex vector spaces where the flow map defined by the time-translation proves to be continuous, and deduce theorems of continuous dependence with respect to the variation of initial data for the solutions of differential problems whose vector fields belong to such spaces. The continuity of the skew-product flow composed by the base flow on the \emph{hull} of a vector field, and by the solutions of the respective differential problem, is also achieved. As a consequence of the previous results, a range of dynamical scenarios is opened in which it is possible to combine techniques of continuous skew-product flows, processes and random dynamical systems (see Arnold~\cite{book:LA}, Aulbach and  Wanner~\cite{paper:AW}, Johnson et al.~\cite{book:JONNF},  Carvalho et al.~\cite{book:CLR}, P\"otzsche and Rasmussen~\cite{paper:CPMR}, Sell~\cite{book:GS},  Shen and Yi~\cite{paper:WSYY} and the references therein).
\par
\vspace{0.05cm}
The structure and the main results of the paper are organized as follows. Section~\ref{sectopo}
is devoted to recall the topologies $\T_B$ and $\T_D$ on the space $\SC$ of strong Carath\'eodory functions which were firstly presented in the classic references~\cite{book:RMGS,paper:RMGS1}, as well as to introduce the new spaces of $\Theta$-Carath\'eodory functions and the respective new topologies denoted by $\T_\Theta$, where $\Theta$ is a suitable set of moduli of continuity determined by the non-linear equations to be studied. The locally convex metric spaces $(\TC,\T_\Theta)$ will be essential in our paper.
Additionally, the symbol $\LC$ will denote the space of Lipschitz Carath\'eodory functions where the restriction of the previously outlined topologies will play an important role.
\par\vspace{0.05cm}
Section~\ref{sectrans} mainly deals with proving that the map defined by the time-translation provides a continuous flow on $\TC$. Such a result paves the way to the introduction of the concept of the hull of a function in $(\TC,\T_\Theta)$. The definitions of the hulls in $\SC$ and $\LC$ with respect to any of the considered topologies are also recalled.\par\vspace{0.05cm}
In Section~\ref{secTopml} we investigate the topological properties of Carath\'eodory functions admitting $L^p_{loc}$-bounded (resp. $L^1_{loc}$-equicontinuous) $m$-bounds and/or $l$-bounds. In particular, we prove that if $E\subset \SC$ has $L^p_{loc}$-bounded (resp. $L^1_{loc}$-equicontinuous) $m$-bounds and/or $l$-bounds, then  such a property is also inherited by the closure of $E$ in $(\SC,\T)$, where $\T$ is any of the topologies introduced in Section~\ref{sectopo}. As a corollary, we have the corresponding versions of these results for $E=\mathrm{Hull}_{(\SC,\T)}(f)$ where $f\in\SC$ has adequate $m$-bounds and/or $l$-bounds. Furthermore, we prove that, if $E\subset\LC$ has $L^p_{loc}$-bounded  $l$-bounds, then $\mathrm{cls}_{(\SC,\T_1)}(E)=\mathrm{cls}_{(\SC,\T_2)}(E)$, where $\T_1$ and $\T_2$ are any of the previously introduced topologies. We conclude the section giving a sufficient condition for the relative compactness of a set  $E\subset\LC$ with respect to any of the previous topologies under the assumption that $E$ admits $L^p_{loc}$-bounded  $l$-bounds. The problems considered in this section were initially posed by Artstein~\cite{paper:ZA1,paper:ZA2} and Sell~\cite{paper:GS1}.
\par \vspace{0.05cm}
Section~\ref{secContFlowODEs} is devoted to the study of  triangular Carath\'eodory systems of the type $\dot x=f(t,x)$, $\dot y=F(t,x)y+h(t,x)$, where $f\in\LC$   admits either $L^1_{loc}$-equicontinuous $m$-bounds, or $L^1_{loc}$-bounded $l$-bounds. We determine a suitable set of moduli of continuity $\Theta$, starting, in the first case, from the $L^1_{loc}$-equicontinuous $m$-bounds and, in the second one, from the solutions of the differential equations $\dot x=f(t,x)$ when $f$ is in a compact subset of $\LC$. The functions $F$ and $h$ are chosen to be in  $\TC\big(\R^{N\times N}\big)$ and $\TC$, respectively. In each of the two considered scenarios, we give sufficient conditions for the continuity of the solutions with respect to the variation of the initial conditions firstly, and then for the continuity of the skew-product semiflow composed by the base flow on the $\mathrm{Hull}_{(\LC\times \TC\times\TC)}(f,F,h) $ and the solutions of the respective differential equations.
\par\vspace{0.05cm}
In Section~\ref{sec:linearizedSPflow}, assuming that $f$ admits $L^1_{loc}$-equicontinuous $m$-bounds and continuous partial derivatives with respect to $x$, that $J_xf\in\SC$, and using the results obtained in Section~\ref{secContFlowODEs}, we prove the existence of the linearized skew-product semiflow composed by the base flow on the $\mathrm{Hull}_{(\LC\times \TC,\T_\Theta)}(f,J_xf) $ and the solutions of the respective differential equations. In particular, we show that the solutions of Carath\'eodory differential equations are differentiable with respect to initial data even in some cases in which the vector field  has not continuous partial derivative with respect to $x$. \par\vspace{0.05cm}
Finally, notice that the continuous variation of the solutions of Carath\'eodory differential equations  has been widely investigated when weak topologies are considered (see Artstein~\cite{paper:ZA1,paper:ZA2, paper:ZA3}, Heunis~\cite{paper:AJH} and Neustadt~\cite{paper:LWN} among others). The use of some of the ideas contained in this paper and the employment of weak and strong $L^p_{loc}$-like topologies in the study of  Carath\'eodory functional differential equations will be the contents of a forthcoming publication.
\section{Spaces and topologies}\label{sectopo}
In the following, we will denote by $\R^N$ the $N$\nbd-dimensional euclidean space with norm $|\cdot|$ and by $B_r$ the closed ball of $\R^N$ centered at the origin and with radius $r$. When $N=1$ we will simply write $\R$ and the symbol $\R^+$ will denote the set of positive real numbers. Moreover, for any interval $I\subseteq\R$ and any $W\subset\R^N$, we will use the following notation
\begin{itemize}
\item[] $C(I,W)$: space of continuous functions from $I$ to $W$ endowed with the norm $\|\cdot\|_\infty$.
\item[] $C_C(\R)$: space of continuous functions with compact support in $\R$, endowed with the norm $\|\cdot\|_\infty$. When we want to restrict to the positive continuous functions with compact support in $\R$, we will write  $C^+_C(\R)$.
\item[] $L^p(I,\R^N)$, $1\le p <\infty$: space of measurable functions from $I$ to $\R^N$ whose norm is in the Lebesgue space $L^p(I)$.
\item[] $L^p_{loc}(\R^N)$, $1\le p <\infty$: the space of all functions $x(\cdot)$ of $\R$ into $\R^N$ such that for every compact interval $I\subset\R$, $x(\cdot)$ belongs to $L^p\big(I,\R^N\big)$. When $N=1$, we will simply write $L^p_{loc}$.
\end{itemize}
Let $1\le p <\infty$; we will consider, and denote by $\C\big(\R^M\big)$ (or simply $\C$ when $M=N$), the set of functions $f\colon\R\times\R^N\to \R^M$ satisfying
\begin{itemize}
\item[(C1)] $f$ is Borel measurable and
\item[(C2)] for every compact set $K\subset\R^N$ there exists a real-valued function $m^K\in L^p_{loc}$, called \emph{$m$-bound} in the following, such that $|f(t,x)|\le m^K(t)$ for any $x\in K$ and almost every $t\in\R$.
\end{itemize}
Now we introduce the sets of Carath\'eodory functions which are subsequently used.
\begin{defn}\label{def:LC}
A function $f\colon\R\times\R^N\to \R^M$ is said to be \emph{Lipschitz Carath\'eodory for $1\le p <\infty$}, and we will write $f\in \LC\big(\R^M\big)$ (or simply $f\in\LC$ when $M=N$), if it satisfies (C1), (C2) and
\begin{itemize}
\item[(L)] for every compact set $K\subset\R^N$ there exists a real-valued function $l^K\in L^p_{loc}$ such that $|f(t,x)-f(t,y)|\le l^K(t)|x-y|$ for any $x,y\in K$ and almost every $t\in\R$.
\end{itemize}
In particular, for any compact set $K\subset\R^N$, we refer to \emph{the optimal $m$-bound} and \emph{the optimal $l$-bound} of $f$ as to
\begin{equation}
m^K(t)=\sup_{x\in K}|f(t,x)|\qquad \mathrm{and}\qquad l^K(t)=\sup_{\substack{x,y\in K\\ x\neq y}}\frac{|f(t,x)-f(t,y)|}{|x-y|}\, ,
\label{eqOptimalMLbound}
\end{equation}
respectively. Clearly, for any compact set $K\subset\R^N$ the suprema in \eqref{eqOptimalMLbound}  can be taken for a countable dense subset of $K$ leading to the same actual definition, which grants that the functions defined in \eqref{eqOptimalMLbound} are measurable.
\end{defn}
\begin{defn}\label{def:SC}
A function $f\colon\R\times\R^N\to \R^M$ is said to be \emph{strong Carath\'eodory for $1\le p <\infty$}, and we will write $f\in \SC\big(\R^M\big)$ (or simply $f\in\SC$ when $M=N$), if it satisfies (C1), (C2) and
\begin{itemize}
\item[(S)] for almost every $t\in\R$, the function $f(t,\cdot)$ is continuous.
\end{itemize}
The concept of \emph{optimal $m$-bound} for a strong  Carath\'eodory function on any compact set $K\subset\R^N$, is defined exactly as in equation \eqref{eqOptimalMLbound}.
\end{defn}
Functions which are not necessarily continuous in the second variable, are also considered. In order to define such a set, we need to set some notation.
\begin{defn}
We call  \emph{a suitable set of moduli of continuity}, any countable  set of non-decreasing continuous functions
\begin{equation*}
\Theta=\left\{\theta^I_j \in C(\R^+, \R^+)\mid j\in\N, \ I=[q_1,q_2], \ q_1,q_2\in\Q\right\}
\end{equation*}
such that $\theta^I_j(0)=0$ for every $\theta^I_j\in\Theta$, and  with the relation of partial order given~by
\begin{equation*}\label{def:modCont}
\theta^{I_1}_{j_1}\le\theta^{I_2}_{j_2}\quad \text{whenever } I_1\subseteq I_2 \text{ and } j_1\le j_2 \, .
\end{equation*}
\end{defn}
We can now introduce the family of sets $\TC\big(\R^M\big)$, where $\Theta$ is a suitable set of moduli of continuity.
\begin{defn}\label{def:TC}
Let   $\Theta$ be a suitable set of moduli of continuity, and $\K_j^I$ the set of functions in $C(I,B_j)$ which admit $\theta^I_j$ as modulus of continuity. We say that $f$ is \emph{$\Theta$\nbd-Carath\'eodory for $1\le p <\infty$} and write $f\in \TC\big(\R^M\big)$ (or simply $f\in\TC$ when $M=N$), if $f$ satisfies  (C1), (C2), and for each $j\in\N$ and $I=[q_1,q_2], \ q_1,q_2\in\Q$ it~holds
\begin{itemize}
\item[(T)] if $\big(x_n(\cdot)\big)_{\nin}$ is a sequence in $\K_j^I$  uniformly converging to $x(\cdot)\in\K_j^I$, then
\begin{equation}
\lim_{\nti}\int_I\big|f\big(t,x_n(t)\big)-f\big(t,x(t)\big)\big|^pdt=0.
\label{T}
\end{equation}
\end{itemize}
\end{defn}
\begin{rmk}
As regards Definitions \ref{def:LC}, \ref{def:SC} and \ref{def:TC}, we identify the functions which lay in the same set and only differ on a negligible subset of  $\R^{1+N}$. The constraint about belonging to the same set is crucial. Indeed, without any additional constraint, a function in $\SC\big(\R^M\big)$ could in fact be identified with a function which is not in $\SC\big(\R^M\big)$. Furthermore, such a rule implies that $\LC\big(\R^M\big)\subset\SC\big(\R^M\big)$ but $\SC\big(\R^M\big)$  is not included in $\TC\big(\R^M\big)$. Nevertheless, a continuous injection  (which is not a bijection, as we will show in section 6) of $\SC\big(\R^M\big)$ in $\TC\big(\R^M\big)$ is straightforward.  Thus, the following chain can be sketched
\begin{equation}
\LC\big(\R^M\big)\subset\SC\big(\R^M\big)\hookrightarrow\TC\big(\R^M\big)\, ,
\label{eq:SPincl}
\end{equation}
where $\Theta$ is any suitable set of moduli of continuity.
\end{rmk}
In particular, the following proposition characterizes the process of identification in $\TC\big(\R^M\big)$ and, as a consequence, implies that $\TC\big(\R^M\big)$ is a metric space when endowed with the topology defined immediately after.
\begin{prop}
Let $f,g\in\TC\big(\R^M\big)$ coincide almost everywhere in $\R\times\R^N$. Then, for any $\K_j^I$ as in \rm{Definition~\ref{def:TC}}, we have that
\begin{equation*}\label{prop:coincideAE=>CoincideOnTheContinuous}
\forall\, x(\cdot)\in\K_j^I\ :\ f\big(t,x(t)\big)=g\big(t,x(t)\big) \text{ for a.e. } t\in I\, .
\end{equation*}
\end{prop}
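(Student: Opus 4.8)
The plan is to use the continuity property (T) to reduce the claim to agreement of $f$ and $g$ along \emph{perturbed} curves, for which a Fubini-type argument applies directly. The heart of the matter — and the step I expect to be the main obstacle — is that the graph $\{(t,x(t)):t\in I\}$ is a Lebesgue-null subset of $\R\times\R^N$ (it is the image of the interval $I$ under a continuous map into $\R^{1+N}$ with $N\ge 1$), so the hypothesis that $f$ and $g$ agree off a null set $Z\subset\R\times\R^N$ gives \emph{no} direct information along this single curve. The device to get around this is to translate $x$ by small constant vectors: for almost every choice of constant the translated curve meets $Z$ in a null set of times, and property (T) transfers the resulting a.e.\ agreement back to $x$ itself.

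First I would set $Z=\{(t,x)\in\R\times\R^N : f(t,x)\neq g(t,x)\}$, which is Borel measurable with $\meas(Z)=0$ by hypothesis, and observe that $(t,c)\mapsto(t,x(t)+c)$ is measurable on $I\times\R^N$, so that $(t,c)\mapsto\chi_Z\big(t,x(t)+c\big)$ is measurable. By Tonelli's theorem, $\int_{\R^N}\big(\int_I\chi_Z(t,x(t)+c)\,dt\big)\,dc=\int_I\big(\int_{\R^N}\chi_Z(t,x(t)+c)\,dc\big)\,dt$. For each fixed $t$, the translation $y=x(t)+c$ shows the inner integral on the right equals the $N$-dimensional measure of the slice $\{y:(t,y)\in Z\}$, and integrating these slices over $t\in I$ yields $\meas\big(Z\cap(I\times\R^N)\big)=0$. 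Hence the double integral vanishes, and therefore for a.e.\ $c\in\R^N$ one has $f\big(t,x(t)+c\big)=g\big(t,x(t)+c\big)$ for a.e.\ $t\in I$. The decisive feature here is that this computation uses only translation invariance of Lebesgue measure in the $x$-variable, so no regularity of $x$ beyond measurability is needed; in particular I avoid any Jacobian or change-of-variables formula for the non-smooth shear $\Phi(t,c)=(t,x(t)+c)$.

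Finally, since the set of admissible constants has full measure in $\R^N$, I would select a sequence $c_n\to 0$ with $|c_n|\le 1$ along which $f\big(\cdot,x(\cdot)+c_n\big)=g\big(\cdot,x(\cdot)+c_n\big)$ a.e.\ on $I$. Writing $x_n:=x+c_n$, each $x_n$ admits the same modulus of continuity as $x$ and takes values in $B_{j+1}$, so $x_n\in\K_{j+1}^I$; moreover $x\in\K_j^I\subseteq\K_{j+1}^I$ (using the partial order $\theta^I_j\le\theta^I_{j+1}$), and $x_n\to x$ uniformly. Applying property (T) at index $j+1$ to both $f$ and $g$, the limit \eqref{T} gives $\int_I|f(t,x_n(t))-f(t,x(t))|^p\,dt\to 0$ and $\int_I|g(t,x_n(t))-g(t,x(t))|^p\,dt\to 0$. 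Since $f(\cdot,x_n(\cdot))=g(\cdot,x_n(\cdot))$ a.e.\ on $I$, the triangle inequality in $L^p(I)$ forces $\big\|f(\cdot,x(\cdot))-g(\cdot,x(\cdot))\big\|_{L^p(I)}=0$, i.e.\ $f(t,x(t))=g(t,x(t))$ for a.e.\ $t\in I$, which is the assertion. The only subtlety in this last stage is keeping all the approximants in a \emph{common} $\K$, which is precisely why I pass to the index $j+1$ and restrict to $|c_n|\le 1$.
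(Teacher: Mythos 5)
Your proposal is correct and follows essentially the same route as the paper's own proof: a Fubini/Tonelli argument showing that almost every constant translate $x(\cdot)+c$ of the curve avoids the disagreement set for almost every $t$, followed by choosing translates $c_n\to 0$, placing everything in $\K_{j+1}^I$, and invoking property (T) for both $f$ and $g$ together with the triangle inequality. The only cosmetic difference is that you integrate $\chi_Z$ directly via Tonelli and translation invariance, whereas the paper phrases the same computation in terms of sections of the complementary set $E\subset I\times B_1$; the substance is identical.
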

\begin{proof}
Consider $x(\cdot)\in\K_j^I$ and let $V\subset \R\times\R^N$ be such that
\begin{equation*}
f(t,x)=g(t,x)\quad \forall\, (t,x)\in V\quad \text{ and } \quad \meas_{\R^{1+N}}\left(\R^{1+N}\setminus V\right)=0 \, .
\end{equation*}
Consider the set $E=\left\{(t,\ep)\in I\times B_1\subset \R^{1+N}\mid \big(t, x(t)+\ep\big)\in V\right\}$, and for any $t\in I$ denote by $E_t$ the section in $t$  of $E$, i.e. $E_t=\{ \ep\in B_1\mid (t,\ep)\in E \}$. Now, for a given $t\in I$ one has
\begin{equation*}
x(t) + (B_1\setminus E_t)\subset B_{j+1} \setminus V_t\, .
\end{equation*}
Therefore, $\meas_{\R^N}(B_1\setminus E_t)=0$ for almost every $t\in I$. Then, applying Fubini's theorem twice, one has
\begin{equation*}
\meas_{\R}(I)\cdot \meas_{\R^N}(B_1)=\meas_{\R^{1+N}}(E)=\int_{\R^N}  \meas_{\R}(E_\ep) \, d\ep  \, ,
\end{equation*}
where $E_\ep$ denotes the section of $E$ given for any fixed $\ep\in B_1$. Therefore, we have that $\meas_{\R}(E_\ep)=\meas_{\R}(I)$ for almost every $\ep\in B_1$. Now, let $(\ep_n)_{\nin}\subset B_1$ be such that
 \begin{equation*}
\ep_n\xrightarrow{\nti}0\quad \text{ and }\quad \meas_{\R}(E_{\ep_n})=\meas_{\R}(I)\quad \forall \ n\in \N\,.
\end{equation*}
Then, called $x_n(t)=x(t)+\ep_n$ for any $\nin$, one has that $x_n(\cdot)\in\K_{j+1}^I$ and
\begin{equation*}
\begin{split}
\int_I\big|f\big(t,x(t)\big)&-g\big(t,x(t)\big)\big|\, dt\le \int_I\big|f\big(t,x(t)\big)-f\big(t,x_n(t)\big)\big|\, dt\\
&+ \int_I\big|f\big(t,x_n(t)\big)-g\big(t,x_n(t)\big)\big|\, dt+ \int_I\big|g\big(t,x_n(t)\big)-g\big(t,x(t)\big)\big|\, dt\,.
\end{split}
\end{equation*}
The terms on the right-hand side go to zero as $\nti$ because $f\big(t,x_n(t)\big)=g\big(t,x_n(t)\big)$ almost everywhere, $f$ and $g$ are in $\TC\big(\R^M\big)$ and $L^p_{loc}\subset L^1_{loc}$. Therefore, we have that $f\big(t,x(t)\big)=g\big(t,x(t)\big)$ almost everywhere.
\end{proof}
\par
Now we endow the previously introduced spaces with suitable topologies. As a rule, when inducing a topology on a subspace we will denote the induced topology with the same symbol which denotes the topology on the original space. The space $\TC\big(\R^M\big)$ will be endowed with the following topology.
\begin{defn}
Let $\Theta$ be a suitable set of moduli of continuity. We call $\T_{\Theta}$ the topology on $\TC\big(\R^M\big)$ generated by the family of seminorms
\begin{equation*}
p_{I,\, j}(f)=\sup_{x(\cdot)\in\K_j^I}\left[\int_I\big|f\big(t,x(t)\big)\big|^pdt\right]^{1/p} ,\quad f\in\TC\big(\R^M\big)\, ,
\end{equation*}
with $I=[q_1,q_2]$, $q_1,q_2\in\Q$, $j\in\N$, and $\K_j^I$  as in Definition~\ref{def:TC}.
$\left(\TC\big(\R^M\big),\T_{\Theta}\right)$ is a locally convex metric space.
\end{defn}
\par
We introduce two topologies on the set $\SC\big(\R^M\big)$.
\begin{defn}
We call $\T_{B}$ the topology on $\SC\big(\R^M\big)$ generated by the family of seminorms
\begin{equation*}
p_{I,\, j}(f)=\sup_{x(\cdot)\in C(I,B_j)}\left[\int_I\big|f\big(t,x(t)\big)\big|^pdt\right]^{1/p} ,\quad f\in\SC\big(\R^M\big)\, ,
\end{equation*}
where $I=[q_1,q_2]$, $q_1,q_2\in\Q$ and $j\in\N$.
$\left(\SC\big(\R^M\big),\T_{B}\right)$ is a locally convex metric space.
\end{defn}
\begin{defn}\label{def:TD}
Let $D$ be a countable and dense subset of $\R^N$. We call $\T_{D}$ the topology on $\SC\big(\R^M\big)$ generated by the family of seminorms
\begin{equation*}
p_{I,\, x_j}(f)=\left[\int_I|f(t,x_j)|^pdt\right]^p,\quad f\in\SC\big(\R^M\big), \, x_j\in D,\, I=[q_1,q_2],\,  q_1,q_2\in\Q\, .
\end{equation*}
$\left(\SC\big(\R^M\big),\T_{D}\right)$ is a  locally convex metric space.
\end{defn}
Notice that $\SC\big(\R^M\big)$ and $\LC\big(\R^M\big)$ can be endowed with all the previous topologies and the following chain of order holds
\begin{equation}
\T_D\le\T_{\Theta}\le\T_{B}\, .
\label{eq:TopIncl}
\end{equation}
We conclude this section presenting a result about the space $\left(\TC\big(\R^M\big),\T_{\Theta}\right)$.
\begin{thm}
Let $f$ be a function in $\C\big(\R^M\big)$. If there exists a sequence $(f_n)_{n\in\N}$  in
$ \TC\big(\R^M\big)$ such that for every $\K_j^I$, as in \rm{Definition~\ref{def:TC}}, one has
\begin{equation}\label{eq:13.09_10:10}
\lim_{\nti}\sup_{y(\cdot)\in\K_j^I}\int_I\big|f_{n}\big(t,y(t)\big)-f\big(t,y(t)\big)\big|^pdt=0\, ,
\end{equation}
then $f\in \TC\big(\R^M\big)$.
\label{thm:meas+conv=>ThetaC}
\end{thm}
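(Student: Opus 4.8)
The plan is straightforward once one notices that, since $f\in\C\big(\R^M\big)$ is assumed, $f$ already satisfies (C1) and (C2); hence the only thing left to establish is property (T). To avoid a clash of indices I would relabel the approximating sequence as $(f_k)_{k\in\N}$, reserving $n$ for the sequences appearing in (T). So fix $\K_j^I$ as in Definition~\ref{def:TC} and let $(x_n)_{\nin}\subset\K_j^I$ converge uniformly to $x(\cdot)\in\K_j^I$. Before estimating, I would record that every composition in sight lies in $L^p(I)$: for any $y(\cdot)\in\K_j^I\subset C(I,B_j)$ the map $t\mapsto\big(t,y(t)\big)$ is continuous, so the Borel measurability in (C1) of $f$ and of each $f_k$ makes $t\mapsto f\big(t,y(t)\big)$ and $t\mapsto f_k\big(t,y(t)\big)$ measurable, while the $m$\nbd-bounds in (C2) give $\big|f\big(t,y(t)\big)\big|\le m^{B_j}(t)$ with $m^{B_j}\in L^p_{loc}$, and likewise for $f_k$. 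Thus all the quantities below are finite and the triangle inequality in $L^p(I)$ applies. (It is worth stressing that Borel, rather than merely Lebesgue, measurability is exactly what licenses composing with the continuous $x_n$.)

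The core of the argument is the decomposition, valid for every $k,n\in\N$,
\begin{equation*}
\left[\int_I\big|f\big(t,x_n(t)\big)-f\big(t,x(t)\big)\big|^p\,dt\right]^{1/p}\le A_{k,n}+B_{k,n}+C_k,
\end{equation*}
where $A_{k,n}$, $B_{k,n}$, $C_k$ denote the $L^p(I)$\nbd-norms of $f(\cdot,x_n)-f_k(\cdot,x_n)$, of $f_k(\cdot,x_n)-f_k(\cdot,x)$, and of $f_k(\cdot,x)-f(\cdot,x)$, respectively. For the two outer terms I would use $x_n,x\in\K_j^I$ to bound
\begin{equation*}
\max\{A_{k,n},\,C_k\}\le\left[\sup_{y(\cdot)\in\K_j^I}\int_I\big|f_k\big(t,y(t)\big)-f\big(t,y(t)\big)\big|^p\,dt\right]^{1/p},
\end{equation*}
which by hypothesis \eqref{eq:13.09_10:10} tends to $0$ as $k\to\infty$, crucially \emph{uniformly in $n$}. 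The middle term is controlled, for each fixed $k$, by property (T) of $f_k\in\TC\big(\R^M\big)$: since $x_n\to x$ uniformly inside $\K_j^I$, we get $B_{k,n}\to0$ as $\nti$.

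Assembling these into an $\ep/3$ argument completes the proof: given $\ep>0$, the uniform-in-$n$ convergence lets me fix $k$ so large that $A_{k,n}<\ep/3$ for \emph{all} $n$ and simultaneously $C_k<\ep/3$; then, with this $k$ frozen, property (T) of $f_k$ supplies $N$ with $B_{k,n}<\ep/3$ for all $n\ge N$, whence the left-hand side is below $\ep$ for $n\ge N$, proving \eqref{T} and thus (T). The single point that genuinely needs care — and the reason the hypothesis is phrased with the supremum over $\K_j^I$ rather than as convergence at each individual $y(\cdot)$ — is precisely this uniformity in $n$ in the control of $A_{k,n}$; without it one could not decouple the choice of $k$ from the later choice of $N$, and the $\ep/3$ scheme would break down.
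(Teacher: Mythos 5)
Your proposal is correct and follows essentially the same route as the paper's proof: the same three-term triangle-inequality decomposition, with the two outer terms controlled uniformly in $n$ by hypothesis \eqref{eq:13.09_10:10} (for a fixed, sufficiently large index of the approximating sequence) and the middle term killed by property (T) of that fixed approximant. The extra remarks on measurability of the compositions and the explicit emphasis on uniformity in $n$ are sound additions but do not change the argument.
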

\begin{proof}
Since condition (C1) and (C2) are satisfied by hypothesis, we only need to prove condition (T). Consider $I=[q_1,q_2]$, $q_1,q_2\in\Q$, $j\in\N$, and let $(x_k(\cdot))_{k\in\N}$ be a sequence in $\K_j^I$  converging uniformly to some  $x(\cdot)\in\K_j^I$. Thanks to equation \eqref{eq:13.09_10:10}, for a fixed $\ep>0$ there exists $n_0\in\N$ such that,
\begin{equation*}
\sup_{y(\cdot)\in\K_j^I}\left[\int_I\big|f_{n_0}\big(t,y(t)\big)-f\big(t,y(t)\big)\big|^pdt\right]^{1/p}<\ep/2 \, .
\end{equation*}
Therefore, we have that
\begin{equation}
\begin{split}
\big\|f\big(\cdot,x_k(\cdot)\big)-&f\big(\cdot,x(\cdot)\big)\big\|_{p}\le\normp{f\big(\cdot,x_k(\cdot)\big)-f_{n_0}\big(\cdot,x_k(\cdot)\big)}{I}\\
+&\normp{f_{n_0}\big(\cdot,x_k(\cdot)\big)-f_{n_0}\big(\cdot,x(\cdot)\big)}{I} +\normp{f\big(\cdot,x(\cdot)\big)-f_{n_0}\big(\cdot,x(\cdot)\big)}{I}\\
\le\,\ep\, &+\normp{f_{n_0}\big(\cdot,x_k(\cdot)\big)-f_{n_0}\big(\cdot,x(\cdot)\big)}{I}.
\end{split}
\label{eq:03.05_13:15}
\end{equation}
Then, recalling that $f_{n_0}\in\TC\big(\R^M\big)$, from~\eqref{T} and \eqref{eq:03.05_13:15}, we conclude that
\begin{equation*}
\lim_{k\to\infty}\int_I\big|f\big(t,x_k(t)\big)-f\big(t,x(t)\big)\big|^pdt=0\,,
\end{equation*}
and condition (T) holds for $f$.
\end{proof}
\section{Continuity of time translations}\label{sectrans}
Let $\Theta$ be a suitable set of moduli of continuity as in Definition~\ref{def:modCont} and consider $f\in\TC\big(\R^M\big)$. In the following we will denote by $f_t$ the time translation of $f$, i.e. the map of $\R\times\R^N$ into $\R^M$ defined by $(s,x)\mapsto f_t(s,x)=f(s+t,x)$, where trivially $f_t\in\TC\big(\R^M\big)$ for every $t\in\R$. The aim of this section is to prove that the time translation defines a continuous flow on $\left(\TC\big(\R^M\big),\T_\Theta\right)$.
\begin{thm}\label{thm:theta_translCont}
Let $\Theta$ be a suitable set of moduli of continuity. The map
\begin{equation*}
\Pi:\R\times \TC\big(\R^M\big)\to \TC\big(\R^M\big)\, ,\qquad (t,f)\mapsto\Pi(t,f)=f_t\, ,
\end{equation*}
defines a continuous flow on $\left(\TC\big(\R^M\big),\T_\Theta\right)$.
\label{thm:continuityBASE}
\end{thm}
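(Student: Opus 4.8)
The two flow axioms are immediate: $\Pi(0,f)=f_0=f$, and since $(f_s)_t(\sigma,x)=f(\sigma+t+s,x)=f_{t+s}(\sigma,x)$ one gets $\Pi\big(t,\Pi(s,f)\big)=\Pi(t+s,f)$. Hence the whole point is the joint continuity of $\Pi$. As $\big(\TC(\R^M),\T_\Theta\big)$ and $\R$ are metrizable, I would argue sequentially: taking $(t_n,f_n)\to(t_0,f_0)$, I must show that $p_{I,\,j}\big((f_n)_{t_n}-(f_0)_{t_0}\big)\to0$ for every rational interval $I=[q_1,q_2]$ and every $j\in\N$ (note that $\TC(\R^M)$ is a vector space on which each $p_{I,\,j}$ is a seminorm). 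The natural split is through the triangle inequality into $A_n=p_{I,\,j}\big((f_n)_{t_n}-(f_0)_{t_n}\big)$ and $B_n=p_{I,\,j}\big((f_0)_{t_n}-(f_0)_{t_0}\big)$, isolating the variation in the function from the variation in time.

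For $A_n$, I would change variables $\tau=s+t_n$ inside the defining integral: the admissible curves $x(\cdot)\in\K_j^I$ turn into $\tau\mapsto x(\tau-t_n)$ on the translated interval $I+t_n$, still bounded by $B_j$ and still carrying the modulus $\theta^I_j$. Since $t_n\to t_0$, all these translates lie in one fixed rational interval $I'\supseteq I$; extending each such curve to $I'$ by its boundary values preserves the bound and the modulus, and the partial order of $\Theta$ gives $\theta^I_j\le\theta^{I'}_j$, so the extensions belong to $\K_j^{I'}$. Because the integrand is nonnegative, this yields $A_n\le p_{I',\,j}(f_n-f_0)$, which tends to $0$ as $f_n\to f_0$ in $\T_\Theta$. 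This step is exactly where the order structure of a suitable set of moduli of continuity is used.

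For $B_n$ I would first use the cocycle identity to write $(f_0)_{t_n}=\big((f_0)_{t_0}\big)_{t_n-t_0}$, reducing the claim to: for fixed $g\in\TC(\R^M)$, $p_{I,\,j}(g_h-g)\to0$ as $h\to0$, uniformly over $x(\cdot)\in\K_j^I$. For each competitor curve I would insert the aligned shift and estimate
$$\big|g(s+h,x(s))-g(s,x(s))\big|\le\big|g(s+h,x(s))-g(s+h,x(s+h))\big|+\big|G_x(s+h)-G_x(s)\big|,$$
where $G_x(s)=g(s,x(s))$ is understood after extending $x$ to a fixed larger rational interval. The second summand is a genuine $L^p$-translation of the single function $G_x$; the family $\{G_x\}$ is the image of the compact set $\K_j^I$ (compact in $C(I,B_j)$ by Arzel\`a--Ascoli, its elements being uniformly bounded and equicontinuous) under $x\mapsto g(\cdot,x(\cdot))$, which is continuous by condition (T). Hence $\{G_x\}$ is compact in $L^p$, and translation acts equicontinuously on compact subsets of $L^p$, so this term goes to $0$ uniformly in $x$. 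After a change of variables the first summand compares $g\big(\tau,x(\tau-h)\big)$ with $g\big(\tau,x(\tau)\big)$, two curves whose uniform distance is at most $\theta^I_j(|h|)\to0$; the uniform continuity of the same evaluation map on the compact set $\K_j^{I'}$ makes it vanish uniformly as well. Combining, $B_n\to0$, and with $A_n\to0$ the proof closes.

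The main obstacle I anticipate is precisely the uniformity hidden in $B_n$: condition (T) is phrased only for a fixed function and a single uniformly convergent sequence, whereas here the estimate must hold uniformly over the entire family $\K_j^I$ and must also survive the shift of the integration domain $I\mapsto I+h$. The way through is to trade the sequential statement (T) for the uniform continuity of the evaluation map $x\mapsto g(\cdot,x(\cdot))$, which is legitimate because $\K_j^I$ is compact by Arzel\`a--Ascoli; the classical uniform $L^p$-continuity of translation then acts on the resulting compact image, and the enlargement to a fixed rational interval $I'$ — admissible thanks to the monotonicity of $\Theta$ — reconciles the moving domains with a single seminorm.
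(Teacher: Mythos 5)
Your proposal is correct, and its skeleton (continuity in $f$ uniformly for $t$ in compacts, continuity in $t$ for fixed $f$, then the triangle inequality to combine) coincides with the paper's; the estimate $A_n\le p_{I',j}(f_n-f_0)$ via translation of the curves and the monotonicity of $\Theta$ is exactly the paper's first step. Where you genuinely diverge is in the treatment of $B_n$. The paper first proves the limit $p_{I,j}(g_h-g)\to0$ for a \emph{single fixed} curve $x(\cdot)$ — introducing the cut-off translation operator $T_\tau$, invoking continuity of translations in $L^p$ for the one function $g(\cdot,x(\cdot))$, and controlling the two boundary strips $[a,a_n]$ and $[b_n,b]$ by the $m$-bound $m^j$ together with absolute continuity of the integral — and only afterwards upgrades to uniformity over $\K_j^I$ by a contradiction argument: extract a uniformly convergent subsequence $x_n\to x$ from the compact $\K_j^I$ and play \eqref{eq:06.07-18:22}--\eqref{eq:07.07-11:47} against each other. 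You instead get the uniformity directly, by two compactness lemmas: (T) plus compactness of $\K_j^{I'}$ makes the evaluation map $x\mapsto g(\cdot,x(\cdot))$ uniformly continuous with compact image in $L^p$, and translation is equicontinuous on $L^p$-compact sets (finite $\ep$-net plus $\|T_h\|\le1$). Your route buys a cleaner, quantifier-friendly argument that never touches the $m$-bounds (the extension to $I'$ absorbs the boundary strips that the paper must estimate separately as $I_2,I_3$), at the price of having to state and justify the two auxiliary facts explicitly; the paper's route is more elementary per step but hides the uniformity inside a subsequence/contradiction argument. Both ultimately rest on the same ingredients — compactness of $\K_j^I$, condition (T), and $L^p$-continuity of translations — so either proof is acceptable; just make sure, if you write yours out, to record that $\K_j^{I'}$ is closed under uniform limits (so Arzel\`a--Ascoli gives compactness, not merely relative compactness) and that the constant extension of $x(\cdot)$ and of $x(\cdot-h)$ to $I'$ keeps their uniform distance below $\theta^I_j(|h|)$.
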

\begin{proof}
We separately deal with the continuity with respect to $t$ and with respect to $f$, and eventually gather them together.
\par\vspace{0.05cm}
Let $(f_n)_{\nin}$  be a sequence in $\TC\big(\R^M\big)$  converging to some $f$ in $\left(\TC\big(\R^M\big),\T_{\Theta}\right)$. We prove that $(f_n)_t\to f_t$ in $\left(\TC\big(\R^M\big),\T_{\Theta}\right)$ as $\nti$, uniformly for $t$ in a compact interval.
Consider $I=[p_1,p_2]$ and $J=[q_1,q_2]$ such that  $p_1,p_2,q_1,q_2\in\Q$, $0\in J$ and fix $t\in J$. Moreover, for any $j\in\N$ consider $\K_j^I$ and $\K_j^{I+J}$ as in Definition~\ref{def:TC}. Notice that $x(\cdot)\in\K_j^I$ implies $x(\cdot-t)\in\K_j^{I+J}$ up to a suitable extension by constants of the function $x(\cdot-t)$ in $I+J$. Then
\begin{equation}\label{eq:07.07-12:02}
\begin{split}
\lim_{\nti}\sup_{x(\cdot)\in\K_j^I}\int_I\big|(f_n&)_t\big(s,x(s\big))-f_t\big(s,x(s)\big)\big|^pds\\
&=\lim_{\nti}\sup_{x(\cdot)\in\K_j^I}\int_{I+t}\big|f_n\big(r,x(r-t)\big)-f\big(r,x(r-t)\big)\big|^pdr\\
&\le\lim_{\nti}\sup_{x(\cdot)\in\K_j^{I+J}}\int_{I+J}\big|f_n\big(r,x(r)\big)-f\big(r,x(r)\big)\big|^pdr=0\, .
\end{split}
\end{equation}
\par\vspace{0.05cm}
Next, we prove the continuity with respect to the first variable; in other words, the map $t\mapsto f_t$ of $\R$ into $\left(\TC\big(\R^M\big),\T_{\Theta}\right)$ is continuous.
Consider $f\in\TC\big(\R^M\big)$, $I=[a,b]$ where  $a,b\in\Q$ and $t\in\R$ fixed. We aim to prove that for any compact set $\K_j^I$, as  in Definition~\ref{def:TC}, we have that
\begin{equation}
\lim_{\tau\to0}\sup_{x(\cdot)\in\K_j^I}\int_I\big|f_{t+\tau}\big(s,x(s)\big)-f_t\big(s,x(s)\big)\big|^pds=0\,.
\label{eq:Lem2th}
\end{equation}
Firstly, let us fix $x(\cdot)\in\K_j^I$ and prove that if $\tau_n\to0$ as $\nti$ then
\begin{equation}
\lim_{\nti}\int_I\big|f_{t+\tau_n}\big(s,x(s)\big)-f_t\big(s,x(s)\big)\big|^pds=0\, .
\label{eq:06.07-13:48}
\end{equation}
Notice that $f_t\big(\cdot,x(\cdot)\big)\in L^p\big(I,\R^M\big)$ and consider the operator $T_\tau: L^p\big(I,\R^M\big)\to L^p\big(\R,\R^M\big)$, such that $g(\cdot)\mapsto T_\tau g(\cdot)$, where $T_\tau g(\cdot)$ is defined by
\begin{equation*}
T_{\tau}g(s)=
\begin{cases}
g(s+\tau)\, , & \text{if }s+\tau \in I\\
 \quad\ \ \, 0\, , & \text{otherwise.}
\end{cases}
\end{equation*}
By the continuity of translations in $L^p(I)$, see Castillo and Rafeiro \cite[Theorem 3.58]{book:CR}, we have that, if $|\tau_n|\to 0$ as $\nti$, then for a given $\ep>0$ there exists $\delta>0$ such that $\sup_{|\tau_n|<\delta}\normp{T_{\tau_n}f_t\big(\cdot, x(\cdot)\big)-f_t\big(\cdot, x(\cdot)\big)}{I}\le \ep$.
\par
Now, for any $\nin$ define $a_n=\max\{a,a-\tau_n\}$ and $b_n=\min\{b,b-\tau_n\}$, and  consider $n_0\in\N$ so that for any $n>n_0$ we have $|\tau_n|<\delta$. Therefore, for any $n>n_0$ the following chain of inequalities holds
\begin{equation*}
\begin{split}
&\normp{f_{t+\tau_n}\big(\cdot,x(\cdot)\big)-f_t\big(\cdot,x(\cdot)\big)}{I}\\
&\qquad\le \normp{T_{\tau_n}f_t\big(\cdot,x(\cdot)\big)-f_t\big(\cdot,x(\cdot)\big)}{I}+\normp{f_{t+\tau_n}\big(\cdot,x(\cdot)\big)-T_{\tau_n}f_t\big(\cdot,x(\cdot)\big)}{I}\\
&\qquad\le \, \ep +\normp{f_{t+\tau_n}\big(\cdot,x(\cdot)\big)-T_{\tau_n}f_t\big(\cdot,x(\cdot)\big)}{I}\\
&\qquad\le \, \ep +\bigg[\int_{a_n}^{b_n}\big|f_{t}\big(s+\tau_n,x(s)\big)-f_t\big(s+\tau_n,x(s+\tau_n)\big)\big|^pds\bigg]^{1/p}\\
&\qquad\qquad+\left[\int_{a}^{a_n}\big|f_{t}\big(s+\tau_n,x(s)\big)\big|^pds\right]^{1/p}+\bigg[\int_{b_n}^{b}\big|f_{t}\big(s+\tau_n,x(s)\big)\big|^pds\bigg]^{1/p}\\
&\qquad\le \, \ep +\bigg[\int_{a_n+\tau_n}^{b_n+\tau_n}\big|f_{t}\big(u,x(u-\tau_n)\big)-f_t\big(u,x(u)\big)\big|^pdu\bigg]^{1/p}\\
&\qquad\qquad+\left[\int_{a+\tau_n}^{a_n+\tau_n}\big|f_{t}\big(u,x(u-\tau_n)\big)\big|^pdu\right]^{1/p}+\bigg[\int_{b_n+\tau_n}^{b+\tau_n}\big|f_{t}\big(u,x(u-\tau_n)\big)\big|^pdu\bigg]^{1/p}\\
&\qquad= \, \ep +I_1+I_2+I_3\, ,
\end{split}
\end{equation*}
As regard $I_1$, notice that, up to extending the functions $x(\cdot)$ and $\big(x(\cdot -\tau_n)\big)_\nin$ by constants to an interval $J$ containing $I+[-\delta,\delta]$ we have that
\begin{equation}
I_1\le \left[\int_J\big|f_{t}\big(u,x(u-\tau_n)\big)-f_t\big(u,x(u)\big)\big|^pdu\right]^{1/p}\,,
\label{eq:06.07-13:30}
\end{equation}
and the integral on the right-hand side of equation \eqref{eq:06.07-13:30} goes to zero as $\nti$, due to the fact that $f\in\TC\big(\R^M\big)$ and $\|x(\cdot -\tau_n)-x(\cdot)\|_{\infty}\to0$ in $J$ as $\nti$. As regard  $I_2$, let $m^j$ be an $m$-bound of $f$ on $B_j$ and notice that the following chain of inequalities holds
\begin{equation}
I_2\le \bigg[\int_{a-|\tau_n|}^{a}\big|f_{t}\big(u,x(u-\tau_n)\big)\big|^pdu\bigg]^{1/p}\le \bigg[\int_{a-|\tau_n|}^{a}\big(m^j_{t}(u)\big)^pdu\bigg]^{1/p}\,,
\label{eq:06.07-13:43}
\end{equation}
and the integral on the right-hand side of equation \eqref{eq:06.07-13:43} goes to zero as $\nti$, thanks to the absolute continuity of the Lebesgue integral. Similar reasonings apply to $I_3$. Therefore, for any fixed $t\in \R$ and $x(\cdot)\in\K_j^I$ we obtain the limit in \eqref{eq:06.07-13:48}. Next we check that such a convergence is uniform in $\K_j^I$. Otherwise there would exist an $\ep>0$, a sequence $\big(x_n(\cdot)\big)_{n\in\N}$  in $\K_j^I$, and a sequence $(\tau_n)_{n\in\N}$ in $\R$ converging to $0$, such that
\begin{equation*}
\left[\int_I\big|f_{t+\tau_n}\big(s,x_n(s)\big)-f_t\big(s,x_n(s)\big)\big|^pds\right]^{1/p}>\ep, \qquad \forall\ n\in\N\,.
\end{equation*}
However, being $\K_j^I$  compact, there exists a convergent subsequence of $\big(x_n(\cdot)\big)_{n\in\N}$, which we keep on denoting with the same indexes,  converging uniformly in $I$ to some $x(\cdot)\in\K_j^I$ as $n\to\infty$. Nevertheless, from \eqref{eq:06.07-13:48},  there exists $n_0\in\N$ such that, if $n>n_0$, then
\begin{equation}
\normp{f_{t+\tau_n}\big(\cdot,x(\cdot)\big)-f_t\big(\cdot,x(\cdot)\big)}{I}<\frac{\ep}{4}\,.
\label{eq:06.07-18:22}
\end{equation}
Moreover, since $f_t\in\TC\big(\R^M\big)$ and $\big(x_n(\cdot)\big)_{n\in\N}$ converges uniformly to $x(\cdot)$, there exists $n_1\in\N$ such that, if $n>n_1$, then
\begin{equation}
\normp{f_{t}\big(\cdot,x(\cdot)\big)-f_{t}\big(\cdot,x_n(\cdot)\big)}{I}<\frac{\ep}{4}\, .
\label{eq:07.07-11:49}
\end{equation}
Then, for $n> \max\{n_0,n_1\}$, we have that
\begin{equation}
\begin{split}
\ep&<\normp{f_{t+\tau_n}\big(\cdot,x_n(\cdot)\big)-f_t\big(\cdot,x_n(\cdot)\big)}{I}\\
&\le\normp{f_{t+\tau_n}\big(\cdot,x_n(\cdot)\big)-f_{t+\tau_n}\big(\cdot,x(\cdot)\big)}{I}+\normp{f_{t+\tau_n}\big(\cdot,x(\cdot)\big)-f_t\big(\cdot,x(\cdot)\big)}{I}\\
&\qquad \quad +\normp{f_{t}\big(\cdot,x(\cdot)\big)-f_{t}\big(\cdot,x_n(\cdot)\big)}{I}\\
&=A_1+\ep/4+\ep/4
\end{split}
\label{eq:07.07-11:46}
\end{equation}
Finally, notice that
\begin{equation}
\begin{split}
A_1&=\left[\int_{I+\tau_n}\big|f_{t}\big(u,x_n(u-\tau_n)\big)-f_t\big(u,x(u-\tau_n)\big)\big|^pdu\right]^{1/p}\\
&\le \left[\int_{J}\big|f_{t}\big(u,x_n(u-\tau_n)\big)-f_t\big(u,x(u-\tau_n)\big)\big|^pdu\right]^{1/p}<\frac{\ep}{4}\, ,
\end{split}
\label{eq:07.07-11:47}
\end{equation}
for $n$ greater than some $n_2\in\N$ since, once again, $f_t\in\TC\big(\R^M\big)$ and $\big(x_n(\cdot)\big)_{n\in\N}$ converges uniformly to $x(\cdot)$.
Gathering \eqref{eq:07.07-11:46}, \eqref{eq:06.07-18:22}, \eqref{eq:07.07-11:49} and \eqref{eq:07.07-11:47} we get a contradiction, which implies the uniform limit in \eqref{eq:Lem2th}.
\par\vspace{0.05cm}
In order to conclude the proof, consider $(f_n)_{\nin}\subset\TC\big(\R^M\big)$ converging to some $f$ in $\left(\TC\big(\R^M\big),\T_{\Theta}\right)$ and $(t_n)_{\nin}\subset\R$  converging to some $t\in\R$. Fixed $j\in\N$, $I=[q_1,q_2]$ where $q_1,q_2\in\Q$,  and $\K_j^I$ as in Definition~\ref{def:TC}, recalling that the limit in \eqref{eq:07.07-12:02} is uniform for $t$ in compact sets, we have that
\begin{equation*}
\begin{split}
\lim_{\nti}\sup_{x(\cdot)\in\K_j^I}&\left[\int_I\big|(f_n)_{t_n}\big(s,x(s)\big)-f_t\big(s,x(s)\big)\big|^pds\right]^{1/p}\\
&\le \lim_{\nti}\ \sup_{x(\cdot)\in\K_j^I}\left[\int_I\big|(f_n)_{t_n}\big(s,x(s)\big)-f_{t_n}\big(s,x(s)\big)\big|^pds\right]^{1/p}\\
&\quad \ \ \, +\lim_{\nti}\sup_{x(\cdot)\in\K_j^I}\left[\int_I\big|f_{t_n}\big(s,x(s)\big)-f_t\big(s,x(s)\big)\big|^pds\right]^{1/p}=0\,,
\end{split}
\end{equation*}
which ends the proof.
\end{proof}
\begin{rmk}
The continuity of the time translation map in $\left(\SC\big(\R^M\big),\T_{D}\right)$ can be easily proved using the same arguments of the proof of Theorem \ref{thm:continuityBASE}. Therefore, the proof is omitted. Nevertheless, the continuity in $\left(\SC\big(\R^M\big),\T_{B}\right)$ is stated in \cite{book:GS} and the proof can be derived by the one given for $\left(\LC\big(\R^M\big),\T_{B}\right)$ in \cite{book:RMGS}.
\end{rmk}
We conclude this section introducing the concept of hull of a function.
\begin{defn}\label{def:Hull}
Let $E$ denote one of the sets in \eqref{eq:SPincl} and $\T$  one of the topologies in \eqref{eq:TopIncl}, assuming that endowing $E$ with the topology $\T$ makes sense. If $f\in E$, we call  \emph{the hull of $f$ with respect to $(E,\T)$}, the topological subspace of  $(E,\T)$ defined~by
\begin{equation*}
\mathrm{Hull}_{(E,\T)}(f)=\big(\mathrm{cls}_{(E,\T)}\{f_t\mid t\in\R\} ,\, \T\big)\, ,
\end{equation*}
where, $\mathrm{cls}_{(E,\T)}(A)$ represents the closure in $(E,\T)$ of the set $A$, and $\T$ is the induced topology.
\end{defn}
As a corollary of the previous theorem, we deduce  the continuity of the translations in any suitable hull.
\begin{cor}
Let $(E,\T)$  be defined as in {\rm Definition~\ref{def:Hull}}, and let $f$ be a function in $E$. Then, the map
\begin{equation*}
\Pi:\R\times \mathrm{Hull}_{(E,\T)}(f)\to \mathrm{Hull}_{(E,\T)}(f)\, ,\qquad (t,g)\mapsto\Pi(t,g)=g_t\, ,
\end{equation*}
defines a continuous flow on $\mathrm{Hull}_{(E,\T)}(f)$.
\end{cor}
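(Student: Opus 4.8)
The plan is to deduce the corollary from Theorem~\ref{thm:continuityBASE} (and the Remark following it) via the standard principle that the closure of an orbit under a continuous flow is an invariant set, and that the restriction of a continuous flow to a closed invariant subset carrying the induced topology is again a continuous flow. Thus the argument splits into an invariance step and a routine restriction step, and I expect essentially all the genuine content to sit in the former.

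First I would record that, for every admissible pair $(E,\T)$, the translation map $\Pi$ is a continuous flow on the ambient space $(E,\T)$. For $\big(\TC\big(\R^M\big),\T_\Theta\big)$ this is exactly Theorem~\ref{thm:continuityBASE}; for $\big(\SC\big(\R^M\big),\T_D\big)$ and $\big(\SC\big(\R^M\big),\T_B\big)$ it is supplied by the Remark above. Since $\LC\big(\R^M\big)\subset\SC\big(\R^M\big)\hookrightarrow\TC\big(\R^M\big)$, each of these subspaces is mapped into itself by $g\mapsto g_t$ (translation preserves (C1), (C2), (S) and (L), with the respective $m$- and $l$-bounds merely translated) and carries the subspace topology, so continuity of $\Pi$ on the smaller spaces is inherited by restriction. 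In particular, in all admissible cases the partial map $\Pi(t,\cdot)\colon(E,\T)\to(E,\T)$ is continuous for each fixed $t\in\R$, and $\Pi(0,\cdot)=\mathrm{id}$, $\Pi(t+s,\cdot)=\Pi(t,\cdot)\circ\Pi(s,\cdot)$.

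Next I would prove that $H:=\mathrm{Hull}_{(E,\T)}(f)$ is invariant, i.e. that $g_s\in H$ for every $g\in H$ and every $s\in\R$. As $(E,\T)$ is metrizable, fix $g\in H$ and choose a sequence $(t_n)_{\nin}\subset\R$ with $f_{t_n}\to g$ in $(E,\T)$. Fixing $s\in\R$ and invoking the continuity of $\Pi(s,\cdot)$ from the previous step, one gets $f_{t_n+s}=\Pi(s,f_{t_n})\to\Pi(s,g)=g_s$. Since each $f_{t_n+s}$ belongs to the orbit $\{f_t\mid t\in\R\}$, its limit $g_s$ lies in $\mathrm{cls}_{(E,\T)}\{f_t\mid t\in\R\}=H$. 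Hence $\Pi$ maps $\R\times H$ into $H$.

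Finally I would conclude. Endowing $H$ with the induced topology, the map $\Pi\colon\R\times H\to H$ is the restriction, to the subspace $\R\times H$ of its domain and the subspace $H$ of its codomain, of the continuous map $\Pi\colon\R\times E\to E$; a restriction of a continuous map between subspaces carrying the subspace topologies is continuous, so $\Pi$ is continuous on $\R\times H$. The flow identities $\Pi(0,g)=g$ and $\Pi(t+s,g)=\Pi(t,\Pi(s,g))$ hold on $H$ because they already hold on the ambient space. The only step requiring care is the invariance claim, and there the sole ingredient is the continuity of the time-$s$ map on $(E,\T)$, which is precisely what Theorem~\ref{thm:continuityBASE} and the subsequent Remark provide; everything else is formal.
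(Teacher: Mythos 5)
Your proposal is correct and follows exactly the route the paper intends (the paper states the corollary without proof as an immediate consequence of Theorem~\ref{thm:continuityBASE} and the accompanying remark): continuity of the ambient translation flow, invariance of the orbit closure via the continuity of the time-$s$ map, and restriction to the subspace topology. The invariance step, which is the only non-formal point, is argued correctly.
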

\section{Topological properties of the $m$-bounds and $l$-bounds}\label{secTopml}
In this section we will analyze some topological properties of Carath\'eodory functions admitting $L^1_{loc}$-equicontinuous $m$-bounds and/or $L^p_{loc}$-bounded $l$-bounds. The role of $m$-bounds and/or $l$-bounds in proving the continuous variation of ODEs' solutions with respect to initial conditions, has been fully explored in \cite{paper:ZA1,paper:ZA2} when weak topologies are involved. As a matter of fact, section \ref{secContFlowODEs} will show how such topological properties turn out to be useful in order to prove the continuity when the strong topologies introduced in section \ref{sectopo} are employed.\par\vspace{0.05cm}
We start recalling that a subset $S$ of positive functions in $L^p_{loc}$  is bounded if for every $r>0$ the following inequality holds
\begin{equation*}
\sup_{m\in S}\int_{-r}^r m^p(t)\,  dt<\infty\,.
\end{equation*}
In such a case we will say that $S$ is $L^p_{loc}$-bounded.
\begin{defn}\label{weakcomp}
A set $S$ of positive functions in $L^1_{loc}$ \emph{is $L^1_{loc}$-equicontinuous} if for any $r>0$ and for any $\ep>0$ there exists a $\delta=\delta(r,\ep)>0$ such that, for any $-r\le s\le t\le r$, with $t-s<\delta$, we have
\begin{equation*}
\sup_{m\in S}\int_{s}^t m(u)\,du<\ep\, .
\end{equation*}
\end{defn}
\begin{rmk}\label{rmk:equicnt=>bound}
According to the previous definitions,  the $L^1_{loc}$-equicontinuity implies the $L^1_{loc}$-boundedness. On the other hand, if $p>1$ the $L^p_{loc}$-boundedness implies  the $L^1_{loc}$-equicontinuity.
\end{rmk}
In the following, let $\M^+$ be the set of  positive and regular Borel measures on $\R$. We endow $\M^+$ with the following topology.
\begin{defn}
We say that  \emph{a sequence  $(\mu_n)_\nin$ of measures in $\M^+$ vaguely converges to $\mu\in\M^+$}, and write $\mu_n\xrightarrow[]{\widetilde{\sigma}}\mu$, if and only if
\begin{equation*}
 \lim_{\nti}\int_\R \phi(s)\, d\mu_n(s)=\int_\R\phi(s)\, d\mu(s)\qquad \text{for each } \phi\in C^+_C(\R).
\end{equation*}
We will denote such a topological space by $(\M^+,\widetilde{\sigma})$.
\end{defn}
As shown in  Kallenberg \cite[Theorem 15.7.7, p.170]{book:Kall}, $(\M^+,\widetilde{\sigma})$ is a Polish space, i.e. it is separable and completely metrizable. Moreover, the following proposition holds (see \cite[Theorem 15.7.5, p.170]{book:Kall}).
\begin{prop}\label{prop:boundedness}
Any subset $M_0\subset\M^+$ is relatively compact in the vague topology if and only if $\sup_{\mu\in M_0}\mu (B)<\infty$ for any bounded Borel set $B$.
\end{prop}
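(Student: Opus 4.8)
The plan is to prove the two implications separately, using throughout that $(\M^+,\widetilde{\sigma})$ is metrizable (indeed Polish, as just recalled), so that relative compactness coincides with relative sequential compactness and I may argue with sequences.

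\emph{Necessity.} Suppose $M_0$ is relatively compact, and let $B$ be a bounded Borel set, say $B\subset[-r,r]$. I would fix $\phi\in C^+_C(\R)$ with $\phi\ge 1$ on $[-r,r]$. By the very definition of the vague topology, the functional $\mu\mapsto\int_\R\phi\,d\mu$ is continuous on $(\M^+,\widetilde{\sigma})$; hence it is bounded, by some constant $C$, on the compact set $\mathrm{cls}_{(\M^+,\widetilde{\sigma})}(M_0)$. Then $\mu(B)\le\mu([-r,r])\le\int_\R\phi\,d\mu\le C$ for every $\mu\in M_0$, so $\sup_{\mu\in M_0}\mu(B)<\infty$. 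This direction is a soft consequence of compactness.

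\emph{Sufficiency.} Assume $\sup_{\mu\in M_0}\mu(B)<\infty$ for every bounded Borel $B$; in particular $C_k:=\sup_{\mu\in M_0}\mu([-k,k])<\infty$ for each $k\in\N$. By metrizability it suffices to show that every sequence $(\mu_n)_\nin\subset M_0$ admits a subsequence converging vaguely to an element of $\M^+$. I would pick, for each $k$, a countable set $\mathcal{D}_k$ dense for the uniform norm in $\{\phi\in C_C(\R):\supp\phi\subset[-k,k]\}$ (this space is separable), and set $\mathcal{D}=\bigcup_k\mathcal{D}_k$. For $\phi\in\mathcal{D}$ with $\supp\phi\subset[-k,k]$ the reals $\int_\R\phi\,d\mu_n$ are bounded by $\|\phi\|_\infty C_k$, so a diagonal extraction over the countable family $\mathcal{D}$ yields a subsequence $(\mu_{n_j})$ for which $L(\phi):=\lim_j\int_\R\phi\,d\mu_{n_j}$ exists for all $\phi\in\mathcal{D}$. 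A uniform $\ep/3$ argument, based on $|\int_\R\phi\,d\mu_{n_j}-\int_\R\psi\,d\mu_{n_j}|\le\|\phi-\psi\|_\infty C_k$ for $\phi,\psi$ supported in $[-k,k]$, extends $L$ to a positive linear functional on all of $C_C(\R)$ obeying the same local bounds. By the Riesz representation theorem there is a positive regular Borel measure $\mu$ with $L(\phi)=\int_\R\phi\,d\mu$ for every $\phi\in C_C(\R)$; the estimate $\mu([-k,k])\le C_k$ shows $\mu$ is finite on compacta, hence $\mu\in\M^+$. Since every $\phi\in C^+_C(\R)$ lies in $C_C(\R)$, we get $\int_\R\phi\,d\mu_{n_j}\to\int_\R\phi\,d\mu$, i.e. $\mu_{n_j}\xrightarrow{\widetilde{\sigma}}\mu$, which proves relative sequential compactness.

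The delicate point is the sufficiency direction, where one must manufacture an actual limit \emph{measure} rather than merely a limit functional. The engine is the combination of a diagonal extraction against a countable determining family with the Riesz representation theorem, and the uniform local bounds $C_k$ are exactly what make both the extension of $L$ to $C_C(\R)$ and the local finiteness of the representing measure $\mu$ go through. Everything else is routine once metrizability has reduced the statement to sequences.
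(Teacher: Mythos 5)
The paper does not actually prove this proposition: it is quoted verbatim from Kallenberg's \emph{Random Measures} (Theorem 15.7.5 there), just as the metrizability of $(\M^+,\widetilde{\sigma})$ is quoted from Theorem 15.7.7. So there is no in-paper argument to compare against; what you have written is a correct, self-contained proof of the cited result, and it is essentially the standard one. The necessity direction is fine: the vague topology is by construction the coarsest making $\mu\mapsto\int_\R\phi\,d\mu$ continuous for $\phi\in C_C^+(\R)$, and a continuous functional is bounded on the compact closure of $M_0$. The sufficiency direction via diagonal extraction over a countable uniformly dense family, the uniform local bounds $C_k$ to pass from the dense family to all of $C_C(\R)$, and the Riesz representation theorem to realize the limit functional as a measure, is exactly the classical Helly-type selection argument that underlies Kallenberg's theorem; the reduction to sequences via metrizability is legitimate since the paper has already recorded that $(\M^+,\widetilde{\sigma})$ is Polish. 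One small imprecision: the estimate $\mu([-k,k])\le C_k$ is not quite what you get, since bounding $\mu([-k,k])$ from above requires a test function $\phi\ge\chi_{[-k,k]}$ whose support necessarily overshoots $[-k,k]$, yielding $\mu([-k,k])\le C_{k+1}$ (say); this changes nothing, as local finiteness of $\mu$ is all that is needed to conclude $\mu\in\M^+$.
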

Easy arguments of measure theory allow to prove the following characterization of the $L^1_{loc}$-equicontinuous subsets of positive functions in $L^1_{loc}$ through the relative compactness of the associated set of measures in $\M^+$. In order to proceed with the statement, we need to set some notation. We denote by $\M^+_{ac}$ the set of measures $\mu\in\M^+$ such that for every $r\in\R^+$ the restriction of $\mu$ to the interval $[-r,r]$, namely $\mu\lfloor_{[-r,r]}$ is absolutely continuous with respect to the Lebesgue measure. The sets $\M^+_{sc}$ and $\M^+_{pd}$ of singular continuous and purely discontinuous measures respectively, can be similarly defined. Trivially, $\M^+=\M^+_{ac}\oplus\M^+_{sc}\oplus\M^+_{pd}$.
\begin{thm}
Let $S\subset L^1_{loc}$ be a set of positive functions and let $M\subset\M^+_{ac}$ be the set of absolutely continuous measures whose densities are the functions of $S$. Then, the following statements are equivalent.
\begin{itemize}
\item[(i)] $S$ is $L^1_{loc}$-equicontinuous.
\item[(ii)] $M$ is relatively compact in $(\M^+, \widetilde{\sigma})$ and $\mathrm{cls}_{(\M^+, \widetilde{\sigma})}(M)\subset \M^+_{ac}\oplus\M^+_{sc}$.
\end{itemize}
\end{thm}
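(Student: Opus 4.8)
The plan is to prove the two implications separately, using throughout the observation that the hypothesis $\mathrm{cls}_{(\M^+,\widetilde{\sigma})}(M)\subset\M^+_{ac}\oplus\M^+_{sc}$ is precisely the requirement that every vague limit of measures in $M$ be \emph{atomless} (i.e.\ have vanishing purely discontinuous part), and that $L^1_{loc}$-equicontinuity, rewritten in terms of the measures $\mu\in M$ whose densities lie in $S$, says that for every $r>0$ and $\ep>0$ there is $\delta>0$ with $\sup_{\mu\in M}\mu([s,t])<\ep$ whenever $[s,t]\subseteq[-r,r]$ and $t-s<\delta$. The two facts about vague convergence I would invoke are the standard one-sided semicontinuity estimates: if $\mu_n\xrightarrow{\widetilde{\sigma}}\mu$, then $\limsup_\nti\mu_n(K)\le\mu(K)$ for every compact $K$ and $\liminf_\nti\mu_n(G)\ge\mu(G)$ for every bounded open $G$; both follow by sandwiching the indicators $\mathbf{1}_K$ and $\mathbf{1}_G$ between functions of $C^+_C(\R)$ and using the defining property of $\widetilde{\sigma}$.

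For $(i)\Rightarrow(ii)$, I would first obtain relative compactness from Proposition~\ref{prop:boundedness}: fixing $r$ and applying equicontinuity with $\ep=1$ yields a $\delta$, and covering $[-r,r]$ by finitely many intervals of length $<\delta$ bounds $\sup_{\mu\in M}\mu([-r,r])$ by a finite multiple of $1$; since every bounded Borel set lies in some $[-r,r]$, the compactness criterion of Proposition~\ref{prop:boundedness} applies. To see that no limit measure has an atom, suppose $\mu\in\mathrm{cls}_{(\M^+,\widetilde{\sigma})}(M)$ satisfied $\mu(\{x_0\})=a>0$; since $(\M^+,\widetilde{\sigma})$ is metrizable there is a sequence $(\mu_n)_\nin$ in $M$ with $\mu_n\xrightarrow{\widetilde{\sigma}}\mu$. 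Choosing $r$ with $x_0\in(-r,r)$ and applying equicontinuity with $\ep=a/2$ gives $\delta$ so that $\mu_n([x_0-h,x_0+h])<a/2$ for every $n$ and every $h<\delta/2$, while the lower-semicontinuity estimate on the open interval forces $\liminf_\nti\mu_n((x_0-h,x_0+h))\ge\mu((x_0-h,x_0+h))\ge a$, a contradiction.

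For $(ii)\Rightarrow(i)$, I would argue by contradiction: if $S$ is not $L^1_{loc}$-equicontinuous there are $r,\ep_0>0$, points $s_n\le t_n$ in $[-r,r]$ with $t_n-s_n\to0$, and measures $\mu_n\in M$ with $\mu_n([s_n,t_n])\ge\ep_0$. Relative compactness yields a subsequence $\mu_{n_k}\xrightarrow{\widetilde{\sigma}}\mu\in\mathrm{cls}_{(\M^+,\widetilde{\sigma})}(M)$, and passing to a further subsequence I may assume $s_{n_k},t_{n_k}\to x_0\in[-r,r]$. For each fixed $h>0$ the inclusion $[s_{n_k},t_{n_k}]\subseteq[x_0-h,x_0+h]$ holds for all large $k$, so the upper-semicontinuity estimate on the compact interval gives $\mu([x_0-h,x_0+h])\ge\limsup_{k}\mu_{n_k}([x_0-h,x_0+h])\ge\ep_0$; letting $h\downarrow0$ and using continuity from above of the locally finite measure $\mu$ yields $\mu(\{x_0\})\ge\ep_0>0$, contradicting $\mu\in\M^+_{ac}\oplus\M^+_{sc}$.

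The main obstacle I expect is the careful handling of the vague convergence: one must use the correct one-sided semicontinuity—an upper bound on compact intervals and a lower bound on open intervals—in each direction, and pair it with the monotone limit of measures of shrinking intervals. One must also attend to the boundary case where $x_0$ lies near $\pm r$, which is handled by enlarging $r$ so that the relevant small intervals remain inside the working window. The remaining ingredients, namely covering a compact interval by finitely many short ones and extracting convergent subsequences in the Polish space $(\M^+,\widetilde{\sigma})$, are routine.
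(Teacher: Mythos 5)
Your proof is correct. Note that the paper itself offers no argument for this theorem — it is introduced with the remark that ``easy arguments of measure theory'' suffice — so there is no authorial proof to compare against; your write-up supplies precisely the standard route one would expect, namely the compactness criterion of Proposition~\ref{prop:boundedness} for the boundedness half, and the two one-sided Portmanteau estimates for vague convergence (upper semicontinuity on compact intervals, lower semicontinuity on bounded open intervals) combined with the observation that membership in $\M^+_{ac}\oplus\M^+_{sc}$ is exactly atomlessness. The only microscopic loose end is in the contrapositive direction: the negation of equicontinuity gives $\sup_{m\in S}\int_{s_n}^{t_n}m\ge\ep_0$, so you should select $\mu_n\in M$ with $\mu_n([s_n,t_n])\ge\ep_0/2$ (say) rather than $\ge\ep_0$; this changes nothing downstream.
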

The following definition extends the previous concepts to sets of Carath\'eodory functions through their $m$-bounds and/or $l$-bounds.
\begin{defn}
We say that
\begin{itemize}
\item[(i)] a set $E\subset\SC\big(\R^M\big)$  \emph{admits $L^p_{loc}$-bounded (resp. $L^1_{loc}$-equicontinuous)} $m$-bounds, if for any $j\in\N$, the set $S^j\subset L^p_{loc}$, made up of the optimal $m$-bounds on $B_j$ of the functions in $E$, is $L^p_{loc}$\nbd-bounded (resp. $L^1_{loc}$-equicontinuous);
\item[(ii)] \emph{$f\in\SC\big(\R^M\big)$ admits $L^p_{loc}$-bounded (resp. $L^1_{loc}$-equicontinuous) $m$-bounds} if the set $\{f_t\mid t\in\R\}$ admits  $L^p_{loc}$-bounded (resp. $L^1_{loc}$-equicontinuous) $m$-bounds;
\item[(iii)]a set $E\subset\LC\big(\R^M\big)$  \emph{has  $L^p_{loc}$-bounded  (resp. $L^1_{loc}$-equicontinuous)} $l$\nbd-bounds, if for any $j\in\N$, the set $S^j\subset L^p_{loc}$, made up of the optimal $l$-bounds on $B_j$  of the functions in $E$,  is  $L^p_{loc}$-bounded  (resp. $L^1_{loc}$-equicontinuous);
\item[(iv)] $f\in\LC\big(\R^M\big)$  \emph{has $L^p_{loc}$-bounded  (resp. $L^1_{loc}$-equicontinuous) $l$-bounds} if  the set $\{f_t\mid t\in\R\}$  has $L^p_{loc}$-bounded (resp. $L^1_{loc}$-equicontinuous) $l$-bounds.
\end{itemize}
\label{def:05.07-13:05}
\end{defn}
\begin{prop}
Let $E$ be a set of functions in $\SC\big(\R^M\big)$  admitting $L^p_{loc}$-bounded $m$-bounds, $D$ a countable and dense subset of $\R^N$, and  $f\colon \R\times\R^N\to \R^M$  a Borel function such that, for almost every $t\in\R$, $f$ is continuous in $x$. If $(f_n)_\nin$ is a sequence in $E$ such that for any $x_j\in D$ one has $f_n(\cdot,x_j)\to f(\cdot,x_j)$ in $L^p_{loc}$, then $f\in\SC\big(\R^M\big)$.
\label{prop:mbounds}
\end{prop}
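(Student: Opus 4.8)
The plan is to observe that conditions (C1) and (S) hold by hypothesis --- $f$ is assumed Borel and continuous in $x$ for a.e.\ $t$ --- so that the only thing left to establish is condition (C2): for every compact $K\subset\R^N$ the function $f$ must admit an $m$-bound in $L^p_{loc}$. Since every compact set is contained in some ball $B_j$, it suffices to produce, for each $j\in\N$, a function $m^j\in L^p_{loc}$ with $|f(t,x)|\le m^j(t)$ for all $x\in B_j$ and a.e.\ $t$.

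First I would exploit that $E$ admits $L^p_{loc}$-bounded $m$-bounds. Writing $m_n^j(t)=\sup_{x\in B_j}|f_n(t,x)|$ for the optimal $m$-bound of $f_n$ on $B_j$ (which is measurable, the supremum being attainable over a countable dense subset of $B_j$), the hypothesis yields $\sup_n\int_{-r}^r (m_n^j)^p\,dt<\infty$ for every $r>0$ and every $j$. Next, enumerating $D=\{x_1,x_2,\dots\}$ and using that $f_n(\cdot,x_i)\to f(\cdot,x_i)$ in $L^p_{loc}$ for each $i$, a standard diagonal extraction produces a single subsequence $(f_{n_k})_k$ along which $f_{n_k}(t,x_i)\to f(t,x_i)$ for a.e.\ $t\in\R$ and for every $x_i\in D$ simultaneously.

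The key step is then to set, for each $j$, the candidate bound $m^j(t)=\liminf_k m_{n_k}^j(t)$. By Fatou's lemma, and since $s\mapsto s^p$ is continuous and nondecreasing on $[0,\infty)$ (so that $\liminf_k (m_{n_k}^j)^p=(m^j)^p$),
\[
\int_{-r}^r (m^j)^p\,dt\le\liminf_k\int_{-r}^r (m_{n_k}^j)^p\,dt\le\sup_n\int_{-r}^r (m_n^j)^p\,dt<\infty,
\]
so $m^j\in L^p_{loc}$. For each fixed $x_i\in D\cap B_j$ and a.e.\ $t$ one has $|f(t,x_i)|=\lim_k|f_{n_k}(t,x_i)|\le\liminf_k m_{n_k}^j(t)=m^j(t)$; since $D$ is countable, the exceptional null sets can be merged into one, giving a full-measure set on which this inequality holds for all $x_i\in D\cap B_j$ at once.

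Finally I would upgrade this from $D$ to all of $B_j$ using the continuity in $x$. Fix $x\in B_j$ and choose $x_{i_\ell}\in D$ with $x_{i_\ell}\to x$; for large $\ell$ these lie in $B_{j+1}$, so on the common full-measure set $|f(t,x_{i_\ell})|\le m^{j+1}(t)$, and letting $\ell\to\infty$ the a.e.\ continuity of $f(t,\cdot)$ gives $|f(t,x)|\le m^{j+1}(t)$. Intersecting with the full-measure set of continuity points yields $|f(t,x)|\le m^{j+1}(t)$ for all $x\in B_j$ and a.e.\ $t$, which is precisely an $L^p_{loc}$ $m$-bound on $B_j$; this establishes (C2) and hence $f\in\SC\big(\R^M\big)$. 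The main obstacle is the passage from the mere norm control furnished by the $L^p_{loc}$-boundedness of the $m$-bounds to a genuine pointwise-in-$x$, $L^p_{loc}$-in-$t$ bound on the limit $f$: the Fatou-plus-\emph{liminf} device applied to the optimal $m$-bounds along an a.e.-convergent subsequence is what bridges this gap, while the boundary issue (points of $D$ approximating $x\in B_j$ may fall just outside $B_j$) is handled by passing to $B_{j+1}$.
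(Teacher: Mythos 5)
Your proof is correct, but it takes a genuinely different route from the paper's. Where you pass to a diagonal subsequence along which $f_{n_k}(t,x_i)\to f(t,x_i)$ for a.e.\ $t$ and all $x_i\in D$, set $m^j=\liminf_k m^j_{n_k}$, and invoke Fatou's lemma to place $m^j$ in $L^p_{loc}$, the paper instead regards the optimal $m$-bounds as densities of measures in $\M^+$, extracts a vaguely convergent subsequence via the compactness criterion of Proposition~\ref{prop:boundedness}, and identifies the candidate bound as the density of the absolutely continuous part of the limit measure through the Lebesgue--Besicovitch differentiation theorem; for $p>1$ it then needs a separate Alaoglu--Bourbaki weak-compactness argument to upgrade the density from $L^1_{loc}$ to $L^p_{loc}$. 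Your Fatou-plus-liminf device is more elementary and treats all $p\ge 1$ uniformly in a single stroke, since $\liminf_k (m^j_{n_k})^p=(\liminf_k m^j_{n_k})^p$ for nonnegative functions. What the paper's heavier machinery buys is a concrete limit object: the measure $\mu^j$ and its absolutely continuous part are reused verbatim in the proof of Proposition~\ref{prop:07.07-19:44} (where the $m$-bound of a limit function is taken to be precisely that density) and tie in with the characterization of $L^1_{loc}$-equicontinuity via relative compactness in $(\M^+,\widetilde{\sigma})$. Two harmless remarks: your detour through $B_{j+1}$ is not actually needed, since $D\cap B_j$ is already dense in the closed ball $B_j$ (a closed ball is the closure of its interior), though an $m$-bound need not be optimal so nothing is lost; and working along a subsequence is legitimate because the conclusion concerns only the limit function $f$.
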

\begin{proof}
Firstly, let us work with $p=1$. Fix $j\in\N$ and, for any $\nin$, let $m^j_n$ be the optimal $m$-bound of $f_n$ on $B_j$ and  $\mu^j_n\in\M^+$ be the positive absolutely continuous measure (with respect to Lebesgue measure) with density $m^j_n(\cdot)$. By hypothesis, the set $\{ m^j_n(\cdot)\mid\nin\}$ is $L^1_{loc}$-bounded. Hence, due to Proposition \ref{prop:boundedness}, the sequence of induced measures $(\mu^j_n)_\nin$,  is relatively compact in $(\M^+,\widetilde{\sigma})$ and thus vaguely converges, up to a subsequence, to a measure $\mu^j\in\M^+$. Moreover, by Lebesgue-Besicovitch differentiation theorem, there exists $m^j(\cdot)\in L^1_{loc}$ such that
\begin{equation}
m^j(t)=\lim_{h\to0}\frac{\mu^j([t,t+h])}{h}\, , \qquad \mathrm{for\ a.e.} \ t\in\R\, ,
\label{eq:30.05-13:30}
\end{equation}
and $m^j(\cdot)$ is the density of the absolutely continuous part of the Radon-Nikod\'ym decomposition of $\mu^j$ in each compact interval. We claim that $m^j(\cdot)$ is an $m$-bound for $f$ on $B_j$. Indeed, fixed $x\in D\cap B_j$, $t,h\in\R$, and considered $\phi\in C_C^+(\R)$ such that $\phi\equiv 1$ in $[t,t+h]$, we have
\begin{equation*}
\begin{split}
\frac{1}{h}\int_{t}^{t+h}|f(s,x)|\, ds&=\lim_{\nti}\frac{1}{h}\int_{t}^{t+h}|f_n(s,x)|\, ds\\
&\le \lim_{\nti}\frac{1}{h}\int_\R\phi(s)\, d\mu^j_n(s)=\frac{1}{h}\int_\R\phi(s)\, d\mu^j(s)\,.
\end{split}
\end{equation*}
Moreover, by the regularity of $\mu^j$
\begin{equation*}
 \mu^j([t,t+h])=\inf\left\{\int_\R\phi(s)\, d\mu^j(s)\;\Big|\; \phi\in C_C^+(\R),\;  \phi\equiv 1\; \text{in } [t,t+h]\right\}\, ,
\end{equation*}
then we have that
\begin{equation*}
\frac{1}{h}\int_{t}^{t+h}|f(s,x)|\, ds\le\frac{\mu^j([t,t+h])}{h}\,.
\end{equation*}
Thus, passing to the limit as $h\to0$, we obtain the aimed inequality for almost every $t\in\R$ for the fixed $x\in D\cap B_j$. Now, in order to obtain the result on the entire $B_j$, consider $T\subset \R$ of full measure be such that $|f(t,x)|\le m^j(t)$ for every $t\in T$ and for every $x\in D\cap B_j$. Then, by the continuity of $f(t,\cdot)$, we obtain the result for almost every $t\in\R$ for all $x\in B_j$, and $m^j$ provides an $m$-bound for $f$ in $B_j$, as claimed.
\par\vspace{0.05cm}
The same reasonings apply for $p>1$, recalling that $L^p_{loc}\subset L^1_{loc}$, and we only need to prove that the function $m^j(\cdot)\in L^1_{loc}$, provided by \eqref {eq:30.05-13:30}, is also in $L^p_{loc}$. By hypothesis  $\{ m^j_n(\cdot)\mid\nin\}$ is $L^p_{loc}$-bounded and, by Alaoglu-Bourbaki theorem, for every $r>0$ the closed balls of $L^p([-r,r])$ are relatively compact in the weak topology $\sigma\big(L^p([-r,r]),L^q([-r,r])\big)$. Therefore, if  $\big( m^j_{i_n}(\cdot)\big)_\nin$ is a weakly convergent subsequence of $\big( m^j_{n}(\cdot)\big)_\nin$ with limit $m^*(\cdot)\in L^p([-r,r])$, then the sequence of induced measures $(\mu^j_{i_n})_\nin$ vaguely converges to the absolutely continuous measure whose density is $m^*(\cdot)$ in $[-r,r]$. Hence, since Equation \eqref {eq:30.05-13:30} holds, $m^*(\cdot) $ has to coincide with $m^j(\cdot)$ in $[-r,r]$.
\end{proof}
Similar arguments allow to prove the following result.
\begin{prop}
Let  $E\subset \LC\big(\R^M\big)$  admit $L^p_{loc}$-bounded $l$-bounds and $D$ be a countable and dense subset of $\R^N$. Then, the $\mathrm{cls}_{(\SC\left(\R^M\right),\T_D)}(E)$ is in $\LC\big(\R^M\big)$.
\label{prop:25.05-10:53}
\end{prop}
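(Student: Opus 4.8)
The plan is to follow closely the argument of Proposition~\ref{prop:mbounds}, simply replacing the evaluation $f_n(\cdot,x)$ by the difference quotient $|f_n(\cdot,x)-f_n(\cdot,y)|/|x-y|$. Since $\left(\SC\big(\R^M\big),\T_D\right)$ is a metric space, its closure is sequential, so I would take $f\in\mathrm{cls}_{(\SC(\R^M),\T_D)}(E)$ together with a sequence $(f_n)_\nin$ in $E$ with $f_n\to f$ in $\T_D$. By the very definition of the closure, $f$ already satisfies (C1), (C2) and (S), so the only thing to establish is the Lipschitz condition (L); moreover $\T_D$-convergence means precisely that $f_n(\cdot,x)\to f(\cdot,x)$ in $L^p_{loc}$ (hence in $L^1_{loc}$) for every $x\in D$. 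As every compact $K$ sits inside some $B_j$ and the optimal $l$-bound on $K$ is dominated by that on $B_j$, it suffices to produce, for each fixed $j\in\N$, a function $l^j\in L^p_{loc}$ that is an $l$-bound for $f$ on $B_j$.

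First I would treat $p=1$. Fix $j$, let $l^j_n$ be the optimal $l$-bound of $f_n$ on $B_j$, and let $\nu^j_n\in\M^+$ be the absolutely continuous measure of density $l^j_n$. By hypothesis $\{l^j_n\mid\nin\}$ is $L^1_{loc}$-bounded, so Proposition~\ref{prop:boundedness} yields relative compactness of $(\nu^j_n)_\nin$ in $(\M^+,\widetilde{\sigma})$; passing to a subsequence, $\nu^j_n\xrightarrow{\widetilde{\sigma}}\nu^j$, and the Lebesgue--Besicovitch differentiation theorem produces $l^j\in L^1_{loc}$, the density of the absolutely continuous part of $\nu^j$, satisfying the analogue of \eqref{eq:30.05-13:30}. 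To see that $l^j$ is an $l$-bound, I fix $x,y\in D\cap B_j$ with $x\neq y$ and observe that, since $f_n(\cdot,x)\to f(\cdot,x)$ and $f_n(\cdot,y)\to f(\cdot,y)$ in $L^1_{loc}$, the reverse triangle inequality gives $|f_n(\cdot,x)-f_n(\cdot,y)|\to|f(\cdot,x)-f(\cdot,y)|$ in $L^1_{loc}$. Then, for $t,h\in\R$ and $\phi\in C_C^+(\R)$ with $\phi\equiv1$ on $[t,t+h]$, averaging the pointwise bound $|f_n(s,x)-f_n(s,y)|\le l^j_n(s)\,|x-y|$ over $[t,t+h]$ and letting $\nti$ gives, exactly as in Proposition~\ref{prop:mbounds},
\begin{equation*}
\frac{1}{h}\int_t^{t+h}\frac{|f(s,x)-f(s,y)|}{|x-y|}\,ds\le\frac{1}{h}\int_\R\phi(s)\,d\nu^j(s)\,.
\end{equation*}
Taking the infimum over such $\phi$, using the regularity of $\nu^j$, and then letting $h\to0$, I obtain $|f(t,x)-f(t,y)|\le l^j(t)\,|x-y|$ for a.e.\ $t$, for the fixed pair $x,y$.

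Because $D\cap B_j$ is countable, this inequality holds off a single null set simultaneously for all pairs $x,y\in D\cap B_j$; since $D\cap B_j$ is dense in $B_j$ and $f(t,\cdot)$ is continuous for a.e.\ $t$ by (S), a limiting argument extends it to every $x,y\in B_j$, so $l^j$ is an $l$-bound for $f$ on $B_j$ and (L) follows. For $p>1$ the same scheme applies via $L^p_{loc}\subset L^1_{loc}$, and it only remains to verify $l^j\in L^p_{loc}$; this I would obtain verbatim from the Alaoglu--Bourbaki argument closing Proposition~\ref{prop:mbounds}: the $L^p([-r,r])$-bounded sequence $(l^j_n)$ has a weakly convergent subsequence whose limit $l^*$ induces measures converging vaguely to the absolutely continuous measure of density $l^*$, which forces $l^*=l^j$ on $[-r,r]$ by the uniqueness in \eqref{eq:30.05-13:30}.

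The measure-theoretic averaging and the Besicovitch limit are routine, being identical to the $m$-bound case. The genuinely new points, and the steps I would handle most carefully, are twofold: the $L^1_{loc}$-convergence of the difference quotients (which is what lets the bound $l^j_n\,|x-y|$ survive the passage to the limit in $n$), and the upgrade from the countable set of admissible pairs in $D\cap B_j$ to all of $B_j$, where it is essential that $f$ already belongs to $\SC$ so that $f(t,\cdot)$ is continuous for almost every $t$.
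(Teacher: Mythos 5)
Your proposal is correct and follows essentially the same route as the paper, which itself simply invokes the argument of Proposition~\ref{prop:mbounds} applied to the optimal $l$-bounds: vague compactness of the associated measures, the Lebesgue--Besicovitch density $l^j$, the averaged inequality for pairs in $D\cap B_j$, the extension to all of $B_j$ by continuity via (S), and the Alaoglu--Bourbaki step for $p>1$. The two details you flag as needing care (the $L^1_{loc}$-convergence of the difference quotients and the countable-to-dense upgrade) are exactly the points the paper handles implicitly, and your treatment of them is sound.
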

\begin{proof}
Consider a sequence $(f_n)_\nin$ of functions in $ E$ converging to some $f\in \SC\big(\R^M\big) $ in $\left(\SC\big(\R^M\big),\T_D\right)$. Fix $j\in\N$ and, for any $\nin$, let $l^j_n$ be the optimal $l$-bound of $f_n$ on $B_j$. Reasoning like in the proof of Proposition~\ref{prop:mbounds}, we have that the sequence of absolutely continuous measures with densities  $(l^j_n(t))_\nin$ vaguely converges, up to a subsequence, to a positive measure whose absolutely continuous part has a function $l^j(\cdot)\in L^p_{loc}$ as density. Additionally, for any $x,y\in D\cap B_j$ with $x\neq y$ the following inequality holds
\begin{equation*}
|f(t,x)-f(t,y)|\le l^j(t)\,|x-y|\quad \text{for a.e. } t\in\R\,.
\end{equation*}
An extension of the previous inequality to the entire $B_j$ is thus achieved by continuity,  like in Proposition~\ref{prop:mbounds}. Therefore, $f\in\LC\big(\R^M\big)$, which ends the proof.
\end{proof}
As a consequence we deduce the following property for the hull of a $\LC\big(\R^M\big)$ function in different topologies.
\begin{cor}\label{cor:LP}
Let $f$ be a function in $\LC\big(\R^M\big)$ with $L^p_{loc}$-bounded $l$-bounds, and $\T$ be any of the introduced topologies. Then, we have that $\mathrm{Hull}_{(\SC(\R^M),\T)}(f)\subset\LC\big(\R^M\big)$ and $\mathrm{Hull}_{(\SC(\R^M),\T)}(f)=\mathrm{Hull}_{(\LC(\R^M),\T)}(f)$.
\end{cor}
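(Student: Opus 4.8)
The plan is to reduce the statement to Proposition~\ref{prop:25.05-10:53}, the ordering \eqref{eq:TopIncl} of the introduced topologies, and the elementary behaviour of closures in a subspace topology. Write $E=\{f_t\mid t\in\R\}$. Since $f\in\LC(\R^M)$ and time translations preserve membership in $\LC(\R^M)$, we have $E\subset\LC(\R^M)$; moreover, by Definition~\ref{def:05.07-13:05}(iv), the hypothesis that $f$ has $L^p_{loc}$-bounded $l$-bounds is precisely the assertion that the set $E$ has $L^p_{loc}$-bounded $l$-bounds. Thus $E$ meets the hypotheses of Proposition~\ref{prop:25.05-10:53}, which gives $\mathrm{cls}_{(\SC(\R^M),\T_D)}(E)\subset\LC(\R^M)$.

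First I would upgrade this containment from $\T_D$ to an arbitrary admissible $\T$. Because the topologies satisfy $\T_D\le\T_\Theta\le\T_B$, the coarsest one, $\T_D$, produces the largest closure, so for any such $\T$ one has
\begin{equation*}
\mathrm{cls}_{(\SC(\R^M),\T)}(E)\subseteq\mathrm{cls}_{(\SC(\R^M),\T_D)}(E)\subset\LC(\R^M)\, .
\end{equation*}
Since by Definition~\ref{def:Hull} the hull is exactly $\mathrm{cls}_{(\SC(\R^M),\T)}(E)$, this already establishes the first assertion $\mathrm{Hull}_{(\SC(\R^M),\T)}(f)\subset\LC(\R^M)$.

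For the equality of the two hulls I would invoke that $\LC(\R^M)$ carries the subspace topology induced from $\SC(\R^M)$, so that for $E\subset\LC(\R^M)$ the standard identity
\begin{equation*}
\mathrm{cls}_{(\LC(\R^M),\T)}(E)=\mathrm{cls}_{(\SC(\R^M),\T)}(E)\cap\LC(\R^M)
\end{equation*}
holds. Combining this with the containment $\mathrm{cls}_{(\SC(\R^M),\T)}(E)\subset\LC(\R^M)$ from the previous step, the intersection on the right reduces to all of $\mathrm{cls}_{(\SC(\R^M),\T)}(E)$, whence $\mathrm{Hull}_{(\LC(\R^M),\T)}(f)=\mathrm{Hull}_{(\SC(\R^M),\T)}(f)$.

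The only genuine subtlety, and the step I expect to require the most care, is the passage from $\T_D$ to a general $\T$: Proposition~\ref{prop:25.05-10:53} is stated only for $\T_D$, so the argument must explicitly use the ordering \eqref{eq:TopIncl} to guarantee that every finer admissible topology yields a smaller closure that is still trapped inside $\LC(\R^M)$. Once this monotonicity of closures is in hand, both conclusions follow at once, and no analytic input beyond Proposition~\ref{prop:25.05-10:53} is needed.
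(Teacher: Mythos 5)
Your proof is correct and follows exactly the route the paper intends: the corollary is stated as an immediate consequence of Proposition~\ref{prop:25.05-10:53} applied to $E=\{f_t\mid t\in\R\}$, and your two supplementary observations --- that the chain $\T_D\le\T_\Theta\le\T_B$ makes every admissible closure sit inside the $\T_D$-closure, and that the subspace-closure identity collapses once the $\SC$-closure is trapped in $\LC\big(\R^M\big)$ --- are precisely the details the paper leaves implicit. Nothing is missing.
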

The next result proves that the existence of $L^p_{loc}$-bounded or $L^1_{loc}$-equicontinuous $m$-bounds and/or $l$-bounds for a set $E\subset \SC\big(\R^M\big)$ is inherited by all the elements of the closure of $E$ with respect to any of the previously introduced topologies.
\begin{prop} Let $\T$ be any of the introduced topologies.
\begin{itemize}
\item[(i)] If $E\subset \SC\big(\R^M\big)$ (resp. $E\subset\LC\big(\R^M\big)$) admits $L^p_{loc}$-bounded $m$-bounds  (resp. $L^p_{loc}$-bounded $l$-bounds)  then $\mathrm{cls}_{(\SC(\R^M),\T)}(E)$ has $L^p_{loc}$-bounded $m$-bounds  (resp. $L^p_{loc}$-bounded $l$-bounds).
\item[(ii)]
 If $E\subset \SC\big(\R^M\big)$ (resp. $E\subset\LC\big(\R^M\big)$) admits $L^1_{loc}$-equicontinuous $m$-bounds  (resp. $L^1_{loc}$-equicontinuous  $l$-bounds), then $\mathrm{cls}_{(\SC(\R^M),\T)}(E)$   has $L^1_{loc}$-equicontinuous $m$-bounds  (resp.  $L^1_{loc}$-equicontinuous $l$-bounds).
 \end{itemize}
\label{prop:07.07-19:44}
\end{prop}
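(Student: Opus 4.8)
The plan is to reduce the whole statement to the single coarsest topology $\T_D$ and then, for each function in the closure, to import the limiting $m$-bound (respectively $l$-bound) already produced in Propositions~\ref{prop:mbounds} and~\ref{prop:25.05-10:53}, checking that the quantitative control passes to the limit \emph{uniformly} over the entire closure.

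First I would reduce to $\T_D$. By the chain \eqref{eq:TopIncl} one has $\T_D\le\T_\Theta\le\T_B$, so finer topologies have smaller closures, giving $\mathrm{cls}_{(\SC,\T_B)}(E)\subseteq\mathrm{cls}_{(\SC,\T_\Theta)}(E)\subseteq\mathrm{cls}_{(\SC,\T_D)}(E)$. Both properties in the statement are inherited by subsets: if $G_1\subseteq G_2$, then for each $j$ the family $S^j(G_1)$ of optimal bounds on $B_j$ is contained in $S^j(G_2)$, and both $\sup_m\int_{-r}^{r} m^{p}$ and $\sup_m\int_s^t m$ are monotone in the family. Hence it suffices to treat $\T=\T_D$. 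Since $(\SC,\T_D)$ is metrizable, every $f\in\mathrm{cls}_{(\SC,\T_D)}(E)$ is the $\T_D$-limit of a sequence $(f_n)\subset E$, i.e. $f_n(\cdot,x)\to f(\cdot,x)$ in $L^p_{loc}$ for each $x\in D$, which is exactly the hypothesis of Propositions~\ref{prop:mbounds} and~\ref{prop:25.05-10:53}.

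For part (i) with $m$-bounds, fix such $f$ and $(f_n)$, let $m^j_n$ be the optimal $m$-bound of $f_n$ on $B_j$ and $\mu^j_n$ the associated absolutely continuous measure. The construction in the proof of Proposition~\ref{prop:mbounds} produces, along a subsequence, a vague limit $\mu^j$ whose absolutely continuous density $m^j\in L^p_{loc}$ is an $m$-bound of $f$ on $B_j$; in particular the optimal $m$-bound satisfies $m^j_f\le m^j$ almost everywhere. The point is to bound $m^j_f$ by a constant depending only on $S^j(E)$. For $p=1$, taking $\phi\in C_C^+(\R)$ with $\phi\equiv1$ on $[-r,r]$, $\phi\le1$ and $\supp\phi\subseteq[-r-1,r+1]$, vague convergence yields $\int_{-r}^r m^j_f\le\mu^j([-r,r])\le\int_\R\phi\,d\mu^j=\lim_n\int_\R\phi\,d\mu^j_n\le\sup_n\int_{-r-1}^{r+1}m^j_n$, finite by the $L^1_{loc}$-boundedness of $S^j(E)$. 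For $p>1$ I would instead use that $(m^j_n)$ is bounded in $L^p([-r,r])$, so (by reflexivity, as already exploited in Proposition~\ref{prop:mbounds}) $m^j$ is a weak limit of a subsequence and weak lower semicontinuity of the norm gives $\|m^j_f\|_{L^p([-r,r])}\le\|m^j\|_{L^p([-r,r])}\le\sup_n\|m^j_n\|_{L^p([-r,r])}$. In both cases the bound is independent of the particular $f$. The $l$-bound statement is identical, replacing Proposition~\ref{prop:mbounds} by Proposition~\ref{prop:25.05-10:53}, which also keeps the closure inside $\LC$ so that the $l$-bounds are defined.

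For part (ii) I keep the same notation but now assume $S^j(E)$ is $L^1_{loc}$-equicontinuous; by Remark~\ref{rmk:equicnt=>bound} it is $L^1_{loc}$-bounded, so Proposition~\ref{prop:boundedness} ensures the relative compactness needed to extract $\mu^j$. Given $r,\ep>0$, let $\delta$ be an equicontinuity modulus of $S^j(E)$ on $[-r-1,r+1]$ for $\ep$. For $-r\le s\le t\le r$ with $t-s<\delta/2$ and any $\eta\in(0,\delta/4]$, the same test-function estimate gives $\int_s^t m^j_f\le\mu^j([s,t])\le\liminf_n\int_{s-\eta}^{t+\eta}m^j_n\le\ep$, because $(t+\eta)-(s-\eta)<\delta$. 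Since $\delta$ depends only on $S^j(E)$, this is the required $L^1_{loc}$-equicontinuity of the optimal $m$-bounds of the closure, and the $l$-bound case is verbatim the same through Proposition~\ref{prop:25.05-10:53}. I expect the main obstacle to be precisely this passage to the limit: the optimal bound of a limiting function is not controlled by any limit of the $m^j_n$ themselves, but by the absolutely continuous density of the vague limit of the induced measures, so the argument must rest on the lower semicontinuity of mass (for $p=1$ and for equicontinuity) or of the $L^p$-norm (for $p>1$); the crucial bookkeeping is that the resulting constants and moduli depend only on $S^j(E)$ and not on $f$ or on the approximating sequence, which is what upgrades the pointwise-in-$f$ estimates to the uniform assertions of Definition~\ref{def:05.07-13:05}.
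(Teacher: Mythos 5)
Your proposal is correct and follows essentially the same route as the paper: for each $g$ in the closure one takes a $\T$-convergent (hence $\T_D$-convergent) sequence in $E$, invokes the vague-limit measure construction of Propositions~\ref{prop:mbounds} and~\ref{prop:25.05-10:53} to get an $m$-bound (resp.\ $l$-bound) dominating the optimal one, and controls its mass on $[-r,r]$ by a test function $\phi\equiv 1$ on $[-r,r]$ supported in $[-r-1,r+1]$, with the $p>1$ case handled via weak compactness of balls in $L^p([-r,r])$. Your explicit reduction to $\T_D$ and your detailed $[s-\eta,t+\eta]$ test-function estimate for the equicontinuity case are just careful write-ups of steps the paper dispatches as ``analogous reasonings.''
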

\begin{proof}Let us firstly assume that $p=1$ and that $E\subset \SC$ admits $L^1_{loc}$-bounded $m$-bounds, i.e. for every $r>0$ and every $j\in\N$ we have
\begin{equation*}
\sup_{f\in E}\int_{-r}^rm_f^j(t)\,dt<\infty\,,
\end{equation*}
where $m_f^j(\cdot)$ denotes the optimal $m$-bound of $f$ on $B_j$.  Let us denote by $\overline E=\mathrm{cls}_{(\mathfrak{SC}_1(\R^M),\T)}(E)$, and for any $g\in \overline E$ denote by $m_g^j(\cdot)$ either, the optimal $m$-bound of $g$ on $B_j$, if $g\in E$, or the $m$-bound of $g$ given by Proposition~\ref{prop:mbounds} if $g\in \overline E\setminus E$, i.e. the absolutely continuous part (with respect to Lebesgue measure) of the limit measure. Moreover, for any $g\in \overline E$, let $(g_n)_\nin$ be a sequence in $E$ converging to $g$ in $\left(\mathfrak{SC}_1\big(\R^M\big),\T\right)$.
Consider $r>0$ and $\phi\in C^+_C$ such that $\supp \phi\subset [-r-1,r+1]$ and $\phi\equiv 1$ in $[-r,r]$. Then,
\begin{equation*}
\begin{split}
\sup_{g\in\overline E}\int_{-r}^r m_g^j(t)\, dt&\le\sup_{g\in\overline E}\int_\R\phi(t)\,  m_g^j(t)\, dt\le\sup_{g\in\overline E}\, \lim_\nti \int_\R \phi(t)\, m_{g_n}^j(t)\, dt\\
&\le\sup_{g\in\overline E}\sup_{\nin}\int_{-r-1}^{r+1} \, m_{g_n}^j(t)\, dt\le\sup_{f\in E}\int_{-r-1}^{r+1} \, m_{f}^j(t)\, dt<\infty\, .
\end{split}
\end{equation*}
Therefore, $\overline E$ admits $L^1_{loc}$-bounded $m$-bounds. Analogous reasonings apply to the rest of the cases in (i) and (ii).\par\vspace{0.05cm}
If $p>1$ the result is a consequence of the weak relative compactness of the closed balls in $L^p([-r,r])$ for every $r>0$, where we employ the same reasonings used in the last part of the proof of Proposition \ref{prop:mbounds}.
\end{proof}
\begin{cor} Let $\T$ be any of the introduced topologies.
\begin{itemize}
\item[(i)] If $f\in \SC\big(\R^M\big)$ (resp. $\LC\big(\R^M\big)$) has $L^p_{loc}$-bounded $m$-bounds  (resp. $L^p_{loc}$-bounded $l$-bounds)  then any $g\in\mathrm{Hull}_{(\SC(\R^M),\T)}(f)$ has $L^p_{loc}$-bounded $m$\nbd-bounds  (resp. $L^p_{loc}$-bounded $l$-bounds).
\item[(ii)]
 If $f\in \SC\big(\R^M\big)$ (resp. $\LC\big(\R^M\big)$) has $L^1_{loc}$-equicontinuous $m$-bounds  (resp. $L^1_{loc}$-equicontinuous  $l$-bounds), then any $g\in\mathrm{Hull}_{(\SC(\R^M),\T)}(f)$  has $L^1_{loc}$-equicontinuous $m$-bounds  (resp.  $L^1_{loc}$-equicontinuous $l$-bounds).
 \end{itemize}
\label{prop:25.05-18:53}
\end{cor}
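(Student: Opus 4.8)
The plan is to deduce this corollary directly from Proposition~\ref{prop:07.07-19:44} by taking the set $E$ there to be the orbit $\{f_t\mid t\in\R\}$, and then to upgrade the resulting statement about the whole hull to a statement about each individual $g$ in the hull by exploiting translation invariance.

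First I would unravel the hypothesis: by Definition~\ref{def:05.07-13:05}(ii), saying that $f\in\SC(\R^M)$ has $L^p_{loc}$-bounded $m$-bounds means precisely that the set $E=\{f_t\mid t\in\R\}$ admits $L^p_{loc}$-bounded $m$-bounds. Applying Proposition~\ref{prop:07.07-19:44}(i) to this $E$, the closure $\mathrm{cls}_{(\SC(\R^M),\T)}(E)$ admits $L^p_{loc}$-bounded $m$-bounds. By Definition~\ref{def:Hull} this closure is exactly $\mathrm{Hull}_{(\SC(\R^M),\T)}(f)$, so the whole hull admits $L^p_{loc}$-bounded $m$-bounds; that is, for each $j\in\N$ the family of optimal $m$-bounds on $B_j$ of the functions lying in the hull is $L^p_{loc}$-bounded.

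The remaining point is to pass from the hull to a single $g$ inside it. The key observation is that the hull is invariant under time translations: by the Corollary following Theorem~\ref{thm:continuityBASE}, the map $\Pi$ restricts to a flow on $\mathrm{Hull}_{(\SC(\R^M),\T)}(f)$, so for every $g$ in the hull and every $s\in\R$ one has $g_s=\Pi(s,g)\in\mathrm{Hull}_{(\SC(\R^M),\T)}(f)$. Consequently $\{g_s\mid s\in\R\}\subset\mathrm{Hull}_{(\SC(\R^M),\T)}(f)$, and since the defining supremum over a smaller family of $m$-bounds can only decrease, the set $\{g_s\mid s\in\R\}$ again admits $L^p_{loc}$-bounded $m$-bounds. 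By Definition~\ref{def:05.07-13:05}(ii) this is exactly the assertion that $g$ has $L^p_{loc}$-bounded $m$-bounds, which proves part (i) in the $m$-bound case.

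Finally I would observe that the remaining cases are identical: replacing Proposition~\ref{prop:07.07-19:44}(i) by part~(ii) yields the $L^1_{loc}$-equicontinuous statement, while using the $l$-bound clauses of Definition~\ref{def:05.07-13:05} together with the inclusion $\LC(\R^M)\subset\SC(\R^M)$ yields the parenthetical $l$-bound assertions. I do not expect a genuine obstacle here; the only subtlety worth flagging is that ``$g$ has $L^p_{loc}$-bounded $m$-bounds'' is, by definition, a statement about the entire orbit $\{g_s\mid s\in\R\}$ rather than about $g$ alone, and it is precisely the translation invariance of the hull that bridges this gap.
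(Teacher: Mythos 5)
Your proposal is correct and follows exactly the route the paper intends: the corollary is obtained by applying Proposition~\ref{prop:07.07-19:44} to $E=\{f_t\mid t\in\R\}$, identifying the closure with the hull, and using the translation invariance of the hull to restrict the conclusion to the orbit $\{g_s\mid s\in\R\}$ of each individual $g$. The paper leaves these details implicit, and your filling-in of the bridging step via Definition~\ref{def:05.07-13:05}(ii) is precisely the right observation.
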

As we have noticed before, all the introduced topologies can be induced on $\LC\big(\R^M\big)$, where, on suitable subsets, they coincide as shown in the following result.
\begin{thm}
Let $E$ be a set in $\LC\big(\R^M\big)$ with $L^p_{loc}$-bounded $l$-bounds, then
\begin{equation*}
(E,\T_1)=(E,\T_2)\qquad\text{and}\qquad\mathrm{cls}_{(\SC(\R^M),\T_1)}(E)=\mathrm{cls}_{(\SC(\R^M),\T_2)}(E)\, ,
\end{equation*}
where $\T_1$ and $\T_2$ are any of the previously introduced topologies.
\label{thm:22.07-11:24}
\end{thm}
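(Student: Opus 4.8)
The plan is to reduce both assertions to a single uniform estimate and then exploit the ordering $\T_D\le\T_\Theta\le\T_B$ from \eqref{eq:TopIncl}, which squeezes $\T_\Theta$ between the two extreme topologies $\T_D$ and $\T_B$. Since all three topologies are metrizable, I will argue with sequences throughout. The whole matter then comes down to proving that, for functions whose $l$-bounds lie in a uniformly $L^p_{loc}$-bounded family, convergence in $\T_D$ already forces convergence in $\T_B$: the Lipschitz character is precisely what lets one pass from point evaluations (which control $\T_D$) to suprema over all continuous test curves (which control $\T_B$).

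\textbf{Key estimate.} Fix $I=[q_1,q_2]$ with $q_1,q_2\in\Q$ and $j\in\N$, and let $h$ be a difference of two functions each admitting an $l$-bound on $B_{j+1}$ in a prescribed $L^p_{loc}$-bounded family. Then $h\in\LC$ with optimal $l$-bound $l_h^{j+1}\le l_1^{j+1}+l_2^{j+1}$, so that $\int_I\big(l_h^{j+1}(t)\big)^p\,dt\le C$ for a constant $C$ independent of the two functions. Given $\rho>0$, I cover the compact ball $B_j$ by finitely many balls of radius $\rho$ centered at points $y_1,\dots,y_N\in D\cap B_{j+1}$, which is possible since $D$ is dense. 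For a.e. $t\in I$ and every $v\in B_j$ there is a center with $|v-y_k|<\rho$, whence $|h(t,v)|\le l_h^{j+1}(t)\,\rho+\max_{1\le k\le N}|h(t,y_k)|$, a bound whose right-hand side does not depend on $v$. Evaluating at $v=x(t)$ for an arbitrary $x(\cdot)\in C(I,B_j)$, integrating, and applying Minkowski's inequality gives, uniformly in $x(\cdot)$,
\begin{equation*}
\sup_{x(\cdot)\in C(I,B_j)}\left[\int_I\big|h\big(t,x(t)\big)\big|^p\,dt\right]^{1/p}\le \rho\,C^{1/p}+\left(\sum_{k=1}^{N}\int_I|h(t,y_k)|^p\,dt\right)^{1/p}.
\end{equation*}

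\textbf{Both equalities.} For the first claim I apply this with $h=f_n-f$, where $f_n,f\in E$, so that $C$ comes directly from the hypothesis. Given $\ep>0$, choosing $\rho$ with $\rho\,C^{1/p}<\ep/2$ fixes the finite set $y_1,\dots,y_N\in D$; since $f_n\to f$ in $\T_D$ each $\int_I|f_n(t,y_k)-f(t,y_k)|^p\,dt\to0$, so the $\T_B$-seminorm of $f_n-f$ on $(I,j)$ is $<\ep$ for $n$ large. Hence the identity $(E,\T_D)\to(E,\T_B)$ is sequentially continuous, thus continuous by metrizability, giving $\T_B\le\T_D$ on $E$; with \eqref{eq:TopIncl} this forces $\T_D=\T_B$ on $E$, and $\T_\Theta$ is squeezed to coincide with both. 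For the second claim, \eqref{eq:TopIncl} yields $\mathrm{cls}_{(\SC,\T_B)}(E)\subseteq\mathrm{cls}_{(\SC,\T_\Theta)}(E)\subseteq\mathrm{cls}_{(\SC,\T_D)}(E)$, so only the reverse inclusion $\mathrm{cls}_{(\SC,\T_D)}(E)\subseteq\mathrm{cls}_{(\SC,\T_B)}(E)$ remains. Taking $g$ in the $\T_D$-closure and, by metrizability, a sequence $(f_n)_\nin\subset E$ with $f_n\to g$ in $\T_D$, Proposition~\ref{prop:25.05-10:53} gives $g\in\LC$ while Proposition~\ref{prop:07.07-19:44} keeps the $l$-bounds of $\mathrm{cls}_{(\SC,\T_D)}(E)$ uniformly $L^p_{loc}$-bounded; the constant $C$ for $h=f_n-g$ is therefore again uniform in $n$, and the previous computation repeated verbatim yields $f_n\to g$ in $\T_B$, i.e. $g\in\mathrm{cls}_{(\SC,\T_B)}(E)$.

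The only genuinely nontrivial point, and the place where the hypothesis is indispensable, is the key estimate: replacing the supremum over the non-equicontinuous family $C(I,B_j)$ by a finite sum of $\T_D$-seminorms. The uniformly $L^p_{loc}$-bounded $l$-bounds are exactly what makes the spatial discretization error $\rho\,C^{1/p}$ small simultaneously for all test curves and for all the functions involved. Once this estimate is in hand, both equalities are mere bookkeeping with the topology ordering and metrizability.
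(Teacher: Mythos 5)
Your proof is correct and follows essentially the same route as the paper: reduce via the ordering $\T_D\le\T_\Theta\le\T_B$ to showing that $\T_D$-convergence implies $\T_B$-convergence, discretize $B_j$ by a finite $\rho$-net of points of $D$, control the discretization error uniformly over $C(I,B_j)$ by the $L^p_{loc}$-bounded $l$-bounds (invoking Propositions~\ref{prop:25.05-10:53} and~\ref{prop:07.07-19:44} for the limit function), and handle the finitely many net points by $\T_D$-convergence. The only cosmetic difference is that you bound $|h(t,v)|$ directly via the nearest net point, whereas the paper interpolates with a partition of unity $\phi_i$ subordinate to the cover; the estimates are the same.
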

\begin{proof}
From Proposition~\ref{prop:25.05-10:53} we know that $\mathrm{cls}_{(\SC(\R^M),\T)}(E)\subset\LC$ for any $\T$. Moreover, due to relation \eqref{eq:TopIncl}, it suffices to prove that if $(f_n)_\nin$ is a sequence of elements of $ E$ converging to some $f$ in $\big(\LC\big(\R^M\big),\T_{D}\big)$, then $(f_n)_\nin$ converges to $f$ in $\big(\LC\big(\R^M\big),\T_{B}\big)$.  Fix a compact interval  $I=[q_1,q_2]$, with $q_1,q_2\in\Q$, $j\in\N$ and, for any $\nin$, let $l_n^j(\cdot)\in L^p_{loc}$ be the optimal $l$-bound of $f_n$ on $B_j$, as in Definition~\ref{def:LC}. By hypothesis, there exists $\rho>0$ such that
\begin{equation*}
\sup_{\nin}\int_I\big(l^j_n(s)\big)^p\, ds<\rho<\infty\, .
\end{equation*}
Now, fix $\ep>0$ and consider $\delta=\ep/(3\rho^{1/p})$. Since $ B_j \subset \R^N$ is compact, and $D$ is dense in $\R^N$, there exist $x_1,\dots x_\nu\in D$ such that $ B_j \subset \bigcup_{i=1}^\nu B_\delta(x_i)$, where $B_\delta(x)$ denotes the closed ball of $\R^N$ of radius $\delta$ centered at $x\in\R^N$.
For $i=1,\dots,\nu$, let us consider continuous functions $\phi_i:\R^N\to[0,1]$  such that
\begin{equation*}
\supp(\phi_i)\subset B_\delta(x_i)\, \qquad \mathrm{and}\qquad \sum_{i=1}^\nu \phi_i(x)=1 \quad \forall\, x\in B_j \, ,
\end{equation*}
and define the functions
\begin{equation}\label{ecu:estrellas}
f^*_n(t,x)=\sum_{i=1}^\nu\phi_i(x)\, f_n(t,x_i)\qquad \mathrm{and}\qquad
f^*(t,x)=\sum_{i=1}^\nu \phi_i(x)\,  f(t,x_i)\, .
\end{equation}
Then, for any $x(\cdot)\in C(I, B_j )$ we have that
\begin{multline}\label{eq:25.05-19:47}
\normp{  f_{n}\big(\cdot,x(\cdot)\big)  -   f\big(\cdot,x(\cdot)\big)}{I}\le \normp{  f_{n}\big(\cdot,x(\cdot)\big)  -   f^*_{n}\big(\cdot,x(\cdot)\big)}{I}\\
			+\normp{   f^*_{n}\big(\cdot,x(\cdot)\big) -   f^*\big(\cdot,x(\cdot)\big)}{I}+\normp{  f^*\big(\cdot,x(\cdot)\big) -   f\big(\cdot,x(\cdot)\big)}{I}\, .
\end{multline}
Let us separately analyze each element in the sum on the right-hand side of equation~\eqref{eq:25.05-19:47}. As regard the first one, we have that
\begin{equation}\label{eq:27.10-13:59}
\begin{split}
\normp{  f_{n}\big(\cdot,x(\cdot)\big)  -   f^*_{n}\big(\cdot,x(\cdot)\big)}{I}^p&=\int_I\Big|\sum_{i=1}^\nu \phi_i\big(x(t)\big)\left[ f_{n}\big(t,x(t)\big)- f_{n}(t,x_i)\right]\Big|^pdt\\
&\le \int_I\Big( \sum_{i=1}^\nu \phi_i\big(x(t)\big)\left| f_{n}\big(t,x(t)\big)- f_{n}(t,x_i)\right|\Big)^pdt\\
&\le \int_I\Big(\sum_{i=1}^\nu \phi_i\big(x(t)\big)\, l^j_{n}(t) \left| x(t)-x_i\right|\Big)^pdt\\
&\le \int_I \Big( \sum_{i=1}^\nu \phi_i\big(x(t)\big)\, l^j_{n}(t)\ \delta\Big)^pdt\\
&= \frac{1}{\rho}\left(\frac{\ep}{3}\right)^p\int_I\left(l^j_{n}(t)\right)^p\, dt\le \left(\frac{\ep}{3}\right)^p\, .
\end{split}
\end{equation}
As regard  the third element of the sum in \eqref{eq:25.05-19:47}, recall that, due to the Propositions~\ref{prop:25.05-10:53} and~\ref{prop:07.07-19:44}, the $l$-bound $\bar{l}^j(\cdot)\in L^p_{loc}$  on $B_j$ for $f$ satisfies
\begin{equation*}
\int_I\left(\bar{l}^j(s)\right)^p\,ds<\rho\,.
\end{equation*}
Therefore, reasoning like in \eqref{eq:27.10-13:59}, we obtain that
\begin{equation}\label{eq:27.10-14:00}
\normp{  f^*\big(\cdot,x(\cdot)\big) -   f\big(\cdot,x(\cdot)\big)}{I}\le \frac{\ep}{3}\, ,
\end{equation}
and notice that both, \eqref{eq:27.10-13:59} and \eqref{eq:27.10-14:00}, are independent of $x(\cdot)\in C(I, B_j )$. \par
Finally, since $(f_{n})_\nin$ converges to $f$ in $\left(\LC\big(\R^M\big),\T_{D}\right)$, consider $n$ big enough so that $\normp{f_{n}(\cdot,x_i) -   f(\cdot,x_i)}{I}<\ep/(3\,\nu)$ for any $i=1,\dots, \nu$. Then, from the expressions~\eqref{ecu:estrellas} and the fact that $\phi_i(x)\le 1$ for each $x\in\R^N$ we deduce that
\begin{equation}\label{eq:27.10-16:29}
\normp{   f^*_{n}\big(\cdot,x(\cdot)\big) -   f^*\big(\cdot,x(\cdot)\big)}{I}\le \sum_{i=1}^\nu \normp{   f_{n}(\cdot,x_i) -   f(\cdot,x_i)}{I}\le \frac{\ep}{3}\,.
\end{equation}
Gathering together \eqref{eq:27.10-13:59}, \eqref{eq:27.10-14:00} and \eqref{eq:27.10-16:29}, we obtain the result.
\end{proof}
When dealing with a function in $\LC\big(\R^M\big)$ with $L^p_{loc}$-bounded $l$-bounds, the previous theorem provides, as a corollary, a condition of equivalence of the hulls.
\begin{cor}
Let $f$ be a function in $\LC\big(\R^M\big)$ with $L^p_{loc}$-bounded $l$-bounds, then
\begin{equation*}
\mathrm{Hull}_{(\SC(\R^M),\T_1)}(f)=\mathrm{Hull}_{(\SC(\R^M),\T_2)}(f)\, ,
\end{equation*}
where $\T_1$ and $\T_2$ are any of the previously introduced topologies.
\label{cor:HullT1=HullT2}
\end{cor}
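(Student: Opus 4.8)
The plan is to reduce the statement directly to Theorem~\ref{thm:22.07-11:24} by taking $E=\{f_t\mid t\in\R\}$. Since $f\in\LC\big(\R^M\big)$, every translate $f_t$ is again in $\LC\big(\R^M\big)$, so $E\subset\LC\big(\R^M\big)$; and by Definition~\ref{def:05.07-13:05}(iv), saying that $f$ \emph{has $L^p_{loc}$-bounded $l$-bounds} means precisely that this set $E$ has $L^p_{loc}$-bounded $l$-bounds. Thus $E$ satisfies the hypothesis of Theorem~\ref{thm:22.07-11:24}. Recalling Definition~\ref{def:Hull}, the hull $\mathrm{Hull}_{(\SC(\R^M),\T)}(f)$ is by definition the set $\mathrm{cls}_{(\SC(\R^M),\T)}(E)$ equipped with the induced topology $\T$; hence proving the corollary amounts to two things: that the two closures coincide as sets, and that $\T_1$ and $\T_2$ induce the same topology on this common set.

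First I would settle the equality of the underlying sets. This is exactly the second assertion of Theorem~\ref{thm:22.07-11:24} applied to $E$, which gives $\mathrm{cls}_{(\SC(\R^M),\T_1)}(E)=\mathrm{cls}_{(\SC(\R^M),\T_2)}(E)$, a set I will denote by $\overline E$.

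The slightly less immediate point---and the one I expect to be the only real obstacle---is that the first assertion of Theorem~\ref{thm:22.07-11:24}, namely $(E,\T_1)=(E,\T_2)$, is stated for the set $E$ itself, whereas the hull carries the topology induced on the \emph{closure} $\overline E$. To bridge this gap I would verify that $\overline E$ again satisfies the hypothesis of the theorem and then invoke it a second time. Indeed, by Proposition~\ref{prop:25.05-10:53} (combined with the ordering \eqref{eq:TopIncl}, which forces every closure to be contained in the $\T_D$-closure) one has $\overline E\subset\LC\big(\R^M\big)$, and by Proposition~\ref{prop:07.07-19:44}(i) the set $\overline E$ inherits the $L^p_{loc}$-bounded $l$-bounds from $E$. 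Hence Theorem~\ref{thm:22.07-11:24} applies to $\overline E$ and yields $(\overline E,\T_1)=(\overline E,\T_2)$, i.e. the two induced topologies agree on the common underlying set.

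Combining the two steps, both hulls equal $(\overline E,\T_1)=(\overline E,\T_2)$ as topological spaces, which is the claim. No delicate estimate is required beyond those already carried out in Theorem~\ref{thm:22.07-11:24} and in Propositions~\ref{prop:25.05-10:53} and~\ref{prop:07.07-19:44}; the entire content of the corollary is the bookkeeping that lets one apply the theorem once to the orbit $\{f_t\mid t\in\R\}$, to identify the closures, and once to that closure, to identify the induced topologies.
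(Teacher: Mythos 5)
Your proposal is correct and follows essentially the same route as the paper, which simply presents the corollary as an immediate consequence of Theorem~\ref{thm:22.07-11:24} applied to the orbit $E=\{f_t\mid t\in\R\}$. Your additional step---re-applying the theorem to the closure $\overline E$ (justified via Proposition~\ref{prop:25.05-10:53} and Proposition~\ref{prop:07.07-19:44}) to see that the \emph{induced} topologies also agree---is a worthwhile piece of bookkeeping that the paper leaves implicit, but it is not a different argument.
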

A characterization of compactness in $L^p_{loc}\big(\R^M\big)$, when $1\le p<\infty$, has been given in \cite{paper:GS1}, where it is proved that $E\subset L^p_{loc}\big(\R^M\big)$ is relatively compact if and only if the following conditions hold:
\begin{itemize}
\item[(i)]  for every compact interval $I\subset \R$ there exists a constant $c=c(I)$ such that $\int_I|f(t)|^pdt\le c$, for every $f\in E$, and
\item[(ii)]  for every $\ep>0$ and  for every compact interval $I\subset \R$ there exists a $\delta=\delta(\ep,I)>0$ such that, if $|\tau|\le\delta$, then $\int_I|f(t+\tau)-f(t)|^pdt\le \ep$, for every $f\in E$.
\end{itemize}
Moreover, a sufficient condition for the relative compactness of a set $E\subset\LC\big(\R^M\big)$ in $(\LC,T_B)$ is also given in the same reference. Next we characterize such a compactness under the assumption that the set $E$ admits $L^p_{loc}$-bounded $l$-bounds.
\begin{thm}\label{thm:compactness_in_LC}
Let  $E\subset \LC\big(\R^M\big)$  admit $L^p_{loc}$-bounded $l$-bounds, $\T$ be any of the previously introduced topologies, and $D$ be a countable dense subset of $\R^N$. The following statements are equivalent.
\begin{itemize}
\item[(i)] The space $(E,\T)$ is relatively compact.
\item[(ii)] For any fixed $x\in D$ the set $\left\{f_x(\cdot)=f(\cdot,x)\mid f\in E\right\}$ is  relatively compact in $L^p_{loc}\big(\R^M\big)$.
\end{itemize}
\end{thm}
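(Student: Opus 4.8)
The plan is to exploit Theorem~\ref{thm:22.07-11:24}: since $E$ has $L^p_{loc}$-bounded $l$-bounds, all of the topologies coincide on $E$ and on its closure in $\SC\big(\R^M\big)$, so both the relative compactness in (i) and the seminorms entering the argument are independent of the chosen $\T$; accordingly I would fix $\T=\T_D$ throughout. For the implication (i)$\Rightarrow$(ii) I would note that for each fixed $x\in D$ the evaluation map $\pi_x\colon\big(\SC(\R^M),\T_D\big)\to L^p_{loc}(\R^M)$, $f\mapsto f(\cdot,x)$, is continuous, precisely because $\T_D$ is generated by the $L^p(I)$-seminorms of $f(\cdot,x_j)$ at the points $x_j\in D$ over rational intervals $I$. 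Then $\pi_x$ maps the compact set $\cls_{(\SC(\R^M),\T_D)}(E)$ onto a compact subset of $L^p_{loc}(\R^M)$ containing $\{f(\cdot,x)\mid f\in E\}$, which yields (ii).

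The substance is the converse (ii)$\Rightarrow$(i). Since $\big(\SC(\R^M),\T_D\big)$ is metrizable, it suffices to show that every sequence $(f_n)_\nin$ in $E$ admits a subsequence $\T_D$-converging to some $f\in\SC\big(\R^M\big)$. First I would use hypothesis (ii): enumerating $D=\{x_k\mid k\in\N\}$ and using the relative compactness of each $\{f(\cdot,x_k)\mid f\in E\}$ in $L^p_{loc}(\R^M)$, a diagonal extraction produces a subsequence (not relabelled) such that $f_n(\cdot,x_k)\to g_k$ in $L^p_{loc}(\R^M)$ for every $k$; passing to a further subsequence I may also assume $f_n(\cdot,x_k)\to g_k$ almost everywhere on $\R$, simultaneously for all $k$. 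The candidate limit will be assembled from the $g_k$.

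The construction of $f$ is where the $l$-bounds enter and is the main obstacle. For each $j\in\N$ let $l_n^j$ be the optimal $l$-bound of $f_n$ on $B_j$; by hypothesis $\rho_{I,j}:=\sup_\nin\int_I (l_n^j)^p\,dt<\infty$ for every rational interval $I$. Setting $\tilde l^j:=\liminf_\nin l_n^j$, Fatou's lemma applied to $(l_n^j)^p$ gives $\tilde l^j\in L^p_{loc}$, hence $\tilde l^j(t)<\infty$ for a.e.\ $t$ and all $j$; I then fix a full-measure set $T$ on which this holds and on which all the a.e.\ limits above are attained. For $t\in T$ and $x_k,x_l\in D\cap B_j$, passing to the limit in $|f_n(t,x_k)-f_n(t,x_l)|\le l_n^j(t)\,|x_k-x_l|$ yields $|g_k(t)-g_l(t)|\le\tilde l^j(t)\,|x_k-x_l|$, so the map $x_k\mapsto g_k(t)$ is $\tilde l^j(t)$-Lipschitz on the dense set $D\cap B_j$ and extends uniquely to a Lipschitz function on $B_j$; letting $j\to\infty$ defines $f(t,\cdot)$ on all of $\R^N$ for $t\in T$ (and $f(t,\cdot):=0$ off $T$), the extensions being consistent across $j$ by density of $D$.

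Finally I would check that $f\in\SC\big(\R^M\big)$: condition (S) holds because $f(t,\cdot)$ is Lipschitz for $t\in T$; the same passage to the limit gives $|f(t,x)-f(t,y)|\le\tilde l^j(t)\,|x-y|$ on $B_j$ with $\tilde l^j\in L^p_{loc}$, which is (L); (C2) follows from $|f(t,x)|\le|g_{k_0}(t)|+\tilde l^j(t)\,\mathrm{diam}(B_j)$ on $B_j$ for a fixed $x_{k_0}\in D\cap B_j$, giving an $L^p_{loc}$ $m$-bound; and (C1) follows from joint measurability of Carath\'eodory functions. Thus $f\in\LC\big(\R^M\big)$, in agreement with Proposition~\ref{prop:25.05-10:53}. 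Since $f(\cdot,x_k)=g_k=\lim_\nin f_n(\cdot,x_k)$ in $L^p_{loc}$ for every $x_k\in D$, we have $f_n\to f$ in $\T_D$, and by Theorem~\ref{thm:22.07-11:24} in every $\T$; this establishes relative sequential compactness, equivalently (i). The delicate point throughout is producing a \emph{single} a.e.\ pointwise Lipschitz bound for the limit, namely $\tilde l^j=\liminf_\nin l_n^j$ via Fatou, which is exactly what upgrades $L^p_{loc}$-convergence at the countably many points of $D$ to a genuine $\SC$ limit defined at every $x$.
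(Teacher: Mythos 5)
Your proof is correct and follows the same skeleton as the paper's: reduce to $\T_D$ via Theorem~\ref{thm:22.07-11:24}, note that (i)$\Rightarrow$(ii) is immediate from the continuity of the evaluation maps, and for the converse perform a diagonal extraction over $D$, build a common a.e.\ Lipschitz bound for the limit, and extend from $D\cap B_j$ to $B_j$ by density. The one step you handle genuinely differently is the production of the limit $l$-bound: the paper obtains $l^j\in L^p_{loc}$ by ``reasoning like in Proposition~\ref{prop:25.05-10:53}'', i.e.\ via vague convergence of the measures with densities $l_n^j$ (using Proposition~\ref{prop:boundedness} and, for $p>1$, weak compactness of balls in $L^p([-r,r])$ together with the Lebesgue--Besicovitch differentiation argument), whereas you take $\tilde l^j=\liminf_\nin l_n^j$ and invoke Fatou's lemma applied to $(l_n^j)^p$. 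Your route is more elementary and self-contained: it avoids the measure-theoretic machinery entirely, and the passage to the limit in $|f_n(t,x_k)-f_n(t,x_l)|\le l_n^j(t)|x_k-x_l|$ is legitimate since the left-hand side converges a.e.\ while the right-hand side has $\liminf$ equal to $\tilde l^j(t)|x_k-x_l|$. What the paper's approach buys is uniformity with the earlier results of Section~\ref{secTopml} (the same limit object, namely the absolutely continuous part of the vague limit measure, reappears in Propositions~\ref{prop:mbounds}, \ref{prop:25.05-10:53} and \ref{prop:07.07-19:44}); what yours buys is brevity and independence from those propositions. Both arguments close the same gap, and the remaining verifications ((C1), (C2), well-posedness of the extension, and the conclusion that $\T_D$-convergence at the points of $D$ gives convergence in every $\T$) match the paper's.
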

\begin{proof}
Firstly, recall that, since $E$  has $L^p_{loc}$-bounded $l$-bounds,  all the considered topologies are equivalent thanks to Theorem~\ref{thm:22.07-11:24}, and thus we will work with $(E,\T_D)$. (i) $\Rightarrow$ (ii) is straightforward. \par\vspace{0.05cm}
(ii) $\Rightarrow$ (i). Consider a sequence $(f_n)_\nin$ in $ E$ , fix $j\in\N$ and, for any $\nin$, let $l^j_n(\cdot)$ be the optimal $l$-bound for $f_n$ on $B_j$. Moreover, let $D_j$ be the set  $D\cap B_j$. By hypothesis, for any $x\in D_j$ the set $\{f_n(\cdot, x)\mid \nin\}$ is relatively compact in  $L^p_{loc}$; therefore, using a diagonal argument, we obtain a subsequence of $(f_n)_\nin$, which we keep denoting with the same indexes, such that
\begin{equation*}
f_n(t,x)\xrightarrow{\nti}f(t,x)\quad \text{for a.e. } t\in\R,\ \forall \ x\in D_j \,.
\end{equation*}
Moreover, by hypothesis the set $\{l^j_n(\cdot)\mid \nin\}$ is weakly bounded in $L^p_{loc}$ and thus, reasoning like in the proof of  Proposition~\ref{prop:25.05-10:53}, we obtain a function $l^j(\cdot)\in L^p_{loc}$ such that for any $x,y\in D_j$ the following inequality holds
\begin{equation*}
|f(t,x)-f(t,y)|\le l^j(t)\,|x-y| \quad \text{for a.e. } t\in\R \,.
\end{equation*}
A continuous extension of $f$ to the entire ball $B_j$ is given by
\begin{equation*}
f(t,x)=\lim_{n\to\infty}f(t,x_n)\quad \text{whenever } x\in B_j, \, (x_n)_{n\in\N}\text{ in } D_j, \text{ and } x_n\to x\, .
\end{equation*}
The definition is well-posed; indeed if  $(x_n)_{n\in\N}$ and $(y_n)_{n\in\N}$ are sequences in $D_j$ such that  $x_n\to x$ and $y_n\to x$, then
\begin{equation*}
|f(t,x_n)-f(t,y_n)|\le l^j(t)|x_n-y_n| \quad \text{for a.e. } t\in\R \,,
\end{equation*}
and the right-hand side goes to zero as $n\to\infty$, for almost every $t\in\R$. It is straightforward to prove that $l^j$ keeps being a Lipschitz coefficient for the $f$ defined on the whole ball $B_j$. Finally, standard arguments of measure theory allow to prove that $f$ also satisfies properties (C1) and (C2) and therefore $f\in\LC$.
\end{proof}
As a corollary of Theorem \ref{thm:compactness_in_LC} and of the conditions (i) and (ii) listed before it, we obtain a characterization of the compactness of $\mathrm{Hull}_{(\LC(\R^M),\T)}(f)$ when $f\in\LC$ admits $L^p_{loc}$-bounded $l$-bounds.
\begin{cor}\label{cor:compactnessHULL(f)}
Let  $f\in \LC\big(\R^M\big)$  admit $L^p_{loc}$-bounded $l$-bounds, $\T$ be any of the previously introduced topologies, and $D$ be a countable dense subset of $\R^N$. The following statements are equivalent.
\begin{itemize}
\item[(i)] $\mathrm{Hull}_{(\LC(\R^M),\T)}(f)$ is compact.
\item[(ii)] For every $x\in D$ the map $\R\to L^p_{loc}\big(\R^M\big)$, $t\mapsto f_t(\cdot,x)$ is bounded and uniformly continuous.
\end{itemize}
\end{cor}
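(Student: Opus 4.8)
The plan is to deduce the statement directly from Theorem~\ref{thm:compactness_in_LC} by choosing $E=\{f_t\mid t\in\R\}$, and then to convert the relative compactness in $L^p_{loc}\big(\R^M\big)$ of the orbits $\{f_t(\cdot,x)\mid t\in\R\}$ into the two requirements (boundedness and uniform continuity) of the orbit map $t\mapsto f_t(\cdot,x)$, by means of the compactness characterization in $L^p_{loc}\big(\R^M\big)$ recalled just before Theorem~\ref{thm:compactness_in_LC}.

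First I would observe that, since $f$ has $L^p_{loc}$-bounded $l$-bounds, so does $E=\{f_t\mid t\in\R\}$ by Definition~\ref{def:05.07-13:05}(iv); moreover, by Corollary~\ref{cor:LP} (together with Proposition~\ref{prop:25.05-10:53}), the closure of $E$ remains inside $\LC\big(\R^M\big)$ and the hulls computed in $\SC\big(\R^M\big)$ and in $\LC\big(\R^M\big)$ coincide. Hence $\mathrm{Hull}_{(\LC(\R^M),\T)}(f)$ is compact exactly when $E$ is relatively compact in $\big(\LC\big(\R^M\big),\T\big)$, because the hull is by definition the closure of $E$. Applying Theorem~\ref{thm:compactness_in_LC} to this $E$, the compactness of the hull is therefore equivalent to the relative compactness in $L^p_{loc}\big(\R^M\big)$, for every $x\in D$, of the set $\{g(\cdot,x)\mid g\in E\}=\{f_t(\cdot,x)\mid t\in\R\}$, that is, of the orbit of translates of $\phi_x:=f(\cdot,x)$ under the shift, since $f_t(\cdot,x)=(\phi_x)_t$.

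The heart of the argument is then a statement about a single function $\phi_x\in L^p_{loc}\big(\R^M\big)$: its orbit $E_x=\{(\phi_x)_t\mid t\in\R\}$ is relatively compact in $L^p_{loc}\big(\R^M\big)$ if and only if the map $t\mapsto(\phi_x)_t$ is bounded and uniformly continuous. For this I would apply the conditions (i)--(ii) characterizing relative compactness in $L^p_{loc}\big(\R^M\big)$ stated before Theorem~\ref{thm:compactness_in_LC} to the translation-invariant family $E_x$. Writing any member as $g=(\phi_x)_t$ and using the elementary identity $(\phi_x)_t(s+\tau)=(\phi_x)_{t+\tau}(s)$, one obtains for every compact interval $I$
\begin{equation*}
\int_I|g(s)|^p\,ds=\big\|(\phi_x)_t\big\|_{L^p(I)}^p\qquad\text{and}\qquad \int_I|g(s+\tau)-g(s)|^p\,ds=\big\|(\phi_x)_{t+\tau}-(\phi_x)_t\big\|_{L^p(I)}^p .
\end{equation*}
Taking the supremum over $g\in E_x$ (equivalently, over $t\in\R$) shows that condition (i) is precisely the boundedness of the image of $t\mapsto(\phi_x)_t$ in $L^p_{loc}\big(\R^M\big)$, while condition (ii), which yields for each $\ep>0$ and each $I$ a $\delta>0$ with $\sup_{t\in\R}\big\|(\phi_x)_{t+\tau}-(\phi_x)_t\big\|_{L^p(I)}^p\le\ep$ whenever $|\tau|\le\delta$, is exactly the uniform continuity of the same map (the $L^p$-seminorms over the symmetric intervals $[-r,r]$ generating the topology of $L^p_{loc}\big(\R^M\big)$). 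Chaining this equivalence with the reduction of the previous paragraph gives the Corollary.

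The step I expect to require the most care is the bookkeeping in this last equivalence: one must exploit that $E_x$ is an \emph{orbit}, so that translating each of its elements by $\tau$ merely reparametrises the family by shifting $t\mapsto t+\tau$. This translation-invariance is exactly what turns the equicontinuity-of-translates condition (ii) into uniform continuity of the orbit map, and it is also what lets the supremum over $E_x$ coincide with the supremum over the time parameter $t$. A minor technical point is that it suffices to test boundedness and the continuity modulus on the symmetric compact intervals $[-r,r]$, which is harmless since every compact interval is contained in some $[-r,r]$.
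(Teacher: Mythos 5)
Your proposal is correct and follows exactly the route the paper intends: the Corollary is stated as an immediate consequence of Theorem~\ref{thm:compactness_in_LC} applied to $E=\{f_t\mid t\in\R\}$ together with the Sell compactness conditions (i)--(ii) for $L^p_{loc}\big(\R^M\big)$ recalled just before it, and your translation-invariance bookkeeping converting those conditions into boundedness and uniform continuity of the orbit map is precisely the (omitted) verification. No discrepancies with the paper's argument.
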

\section{Continuity with respect to the initial conditions for ODEs}\label{secContFlowODEs}
This section deals with the continuity of the solutions with respect to the variation of the initial data and of the coefficients. All the proofs will be given for $p=1$ remembering that, if $I\subset\R$ is a bounded interval, then, $L^p(I)\subset L^1(I)$. For the sake of completeness and to set some notation, we state a theorem of existence and uniqueness of the solution for a Cauchy Problem of Carath\'eodory type. A proof can be found in Coddington and Levinson \cite[Theorems 1.1, 1.2 and 2.1]{book:CL}.
\begin{thm}
For any $f\in\LC$ and any $x_0\in\R^N$ there exists a maximal interval $I_{f,x_0}=(a_{f,x_0},b_{f,x_0})$ and a unique continuous function $x(\cdot,f,x_0)$ defined on $I_{f,x_0}$ which is the solution of the Cauchy Problem
\begin{equation}\label{eq:solLCE}
\dot x=f(t,x)\, ,\qquad x(0)=x_0\, .
\end{equation}
In particular, if $a_{f,x_0}>-\infty$ (resp. $b_{f,x_0}<\infty$), then $|x(t,f,x_0)|\to\infty$ as $t\to a_{f,x_0}$ (resp. as $t\to b_{f,x_0}$).
\label{thm:05.07-13:44}
\end{thm}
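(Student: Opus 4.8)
The plan is to reduce \eqref{eq:solLCE} to its integral form and solve it by a contraction argument, the only real departure from the classical smooth theory being that, since $f\in\LC$ is merely measurable in $t$, every estimate must be carried out through the $L^1_{loc}$-integrals of the $m$-bound and $l$-bound furnished by Definition~\ref{def:LC}, not through pointwise bounds on $f$. First I would note that a continuous $x(\cdot)$ on an interval $J\ni0$ solves \eqref{eq:solLCE} if and only if $x(t)=x_0+\int_0^t f(s,x(s))\,ds$ on $J$; the integrand is measurable by (C1) and integrable by (C2), since $x(\cdot)$ ranges in a compact set.

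For local existence, fix $r>0$, put $K=B_{|x_0|+r}$, and take the bounds $m^K,l^K\in L^p_{loc}\subset L^1_{loc}$. By absolute continuity of the Lebesgue integral, choose $\beta>0$ with $\int_{-\beta}^{\beta}m^K(s)\,ds\le r$ and $\int_{-\beta}^{\beta}l^K(s)\,ds<1$. On the complete metric space $C([-\beta,\beta],K)$ with the supremum norm, the map $(Tx)(t)=x_0+\int_0^t f(s,x(s))\,ds$ sends the space into itself, because $|Tx(t)-x_0|\le\int_{-\beta}^{\beta}m^K(s)\,ds\le r$, and is a contraction, because
\[
|Tx(t)-Ty(t)|\le\Big|\int_0^t l^K(s)\,|x(s)-y(s)|\,ds\Big|\le \|x-y\|_\infty\int_{-\beta}^{\beta}l^K(s)\,ds.
\]
Banach's theorem yields a unique local solution. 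Since the initial time played no role, the same construction solves $\dot x=f(t,x)$, $x(t_0)=\xi$ for any datum $(t_0,\xi)$, which I reuse below. Uniqueness on overlaps follows from the same Lipschitz estimate via a Gronwall argument, so the union of all solutions through $(0,x_0)$ is a single solution on a maximal open interval $I_{f,x_0}=(a_{f,x_0},b_{f,x_0})$.

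The remaining, and most delicate, point is the blow-up alternative. Suppose $b=b_{f,x_0}<\infty$ but $|x(t)|\not\to\infty$ as $t\to b^-$; then there are $R>0$ and $t_n\uparrow b$ with $|x(t_n)|\le R$. The crux is to produce a \emph{uniform} local existence time near $b$: taking $K=B_{R+1}$ and using absolute continuity on the compact interval $[0,b+1]$, I would fix $\beta>0$ such that $\int_J m^K\le1$ and $\int_J l^K<1$ for every subinterval $J$ of length at most $\beta$. Then the construction above, started at $(t_n,x(t_n))$, gives a solution on $[t_n,t_n+\beta]$ remaining in $K$; by uniqueness it agrees with $x(\cdot)$ wherever both are defined. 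For $n$ large enough that $t_n+\beta>b$, this extends $x(\cdot)$ continuously beyond $b$, contradicting the maximality of $b$; the symmetric argument treats $a_{f,x_0}$. I expect this uniform-time estimate to be the main obstacle, precisely because the measurability of $f$ in $t$ forbids the usual appeal to continuity and forces one to extract the uniform $\beta$ from the absolute continuity of the integrals of $m^K$ and $l^K$, and to check at each stage that the solution indeed stays in the ball $K$ before those integral bounds may be invoked.
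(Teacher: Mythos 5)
Your argument is correct, and all the genuinely delicate points are handled: the measurability of $s\mapsto f(s,x(s))$ via (C1), the use of absolute continuity of $\int m^K$ and $\int l^K$ in place of pointwise bounds, the self-map property of $T$ into $C(J,K)$ before the $m$- and $l$-bounds are invoked, and the extraction of a uniform existence time $\beta$ near the finite endpoint in the blow-up alternative. Note, however, that the paper does not prove this theorem at all: it is quoted as classical, with a pointer to Coddington and Levinson (Theorems 1.1, 1.2 and 2.1 of the Carath\'eodory chapter), where existence is obtained by approximate solutions together with Ascoli--Arzel\`a and uniqueness by successive approximations under the integrable Lipschitz condition. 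Your Banach fixed-point route is a standard and slightly more economical alternative: it delivers existence and local uniqueness in one stroke at the cost of using the $l$-bound already for existence (which is harmless here since $f\in\LC$), whereas the Coddington--Levinson scheme separates existence (needing only the $m$-bound and condition (S)) from uniqueness. One small presentational remark: in the uniqueness-on-overlaps step and in the final gluing near $b_{f,x_0}$, it is worth saying explicitly that the compact set $K'$ on which $l^{K'}$ is taken is chosen \emph{after} the two continuous solutions are given, large enough to contain both of their ranges on the compact overlap; as written, a reader might worry that the original solution could leave the ball $B_{R+1}$ used for the freshly constructed local solution, but the Gronwall comparison does not require a common pre-assigned ball.
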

\begin{cor}\label{cor:14:07-21:05}
Let $\Theta$ be a suitable set of moduli of continuity. For any $f\in\LC$, $F\in\TC\big(\R^{N\times N}\big),\, h\in\TC$, and $x_0,\, y_0\in\R^N$, there exists a unique solution of the Cauchy problem
\begin{equation}
\begin{cases}
\dot x=f(t,x), &x(0)=x_0\,,\\
\dot y=F(t,x)\,y+h(t,x), &y(0)= y_0\,,
\end{cases}
\label{eq:system}
\end{equation}
which will be denoted by $(x(\cdot,f,x_0),y(\cdot,f,F,h,x_0, y_0))$, and whose maximal interval of definition coincides with the interval $I_{f,x_0}$ provided by {\rm Theorem~\ref{thm:05.07-13:44}}.
\end{cor}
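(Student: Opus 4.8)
The plan is to exploit the triangular structure of system~\eqref{eq:system}, in which the first equation is decoupled from the second. First I would apply Theorem~\ref{thm:05.07-13:44} to the problem $\dot x=f(t,x)$, $x(0)=x_0$, obtaining the unique maximal solution $x(\cdot):=x(\cdot,f,x_0)$ defined on $I_{f,x_0}=(a_{f,x_0},b_{f,x_0})$. Substituting this now-known function into the second equation turns it into the \emph{linear} non-autonomous problem
\begin{equation*}
\dot y=A(t)\,y+b(t)\,,\qquad y(0)=y_0\,,
\end{equation*}
where $A(t):=F\big(t,x(t)\big)$ and $b(t):=h\big(t,x(t)\big)$. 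Note that one cannot simply view \eqref{eq:system} as a single Carath\'eodory system on $\R^N\times\R^N$ and invoke Theorem~\ref{thm:05.07-13:44} directly: since $F\in\TC\big(\R^{N\times N}\big)$ need not be Lipschitz in its second variable, the joint right-hand side $(f(t,x),F(t,x)y+h(t,x))$ would fail condition~(L) because of the term $F(t,x)y$. The linearity of the $y$-equation in $y$ is precisely what circumvents this obstruction.

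The key preliminary step is to check that $A(\cdot)$ and $b(\cdot)$ lie in $L^1_{loc}(I_{f,x_0})$. For measurability I would note that $t\mapsto\big(t,x(t)\big)$ is continuous, hence Borel, on $I_{f,x_0}$, so that the compositions $A=F\circ(\mathrm{id},x)$ and $b=h\circ(\mathrm{id},x)$ are Borel measurable by~(C1). For integrability, fix a compact interval $[\alpha,\beta]\subset I_{f,x_0}$; since $x(\cdot)$ is continuous, the image $x([\alpha,\beta])$ is compact and therefore contained in some ball $B_j$. Applying~(C2) to $F$ and $h$ then gives $|A(t)|\le m_F^{B_j}(t)$ and $|b(t)|\le m_h^{B_j}(t)$ for almost every $t\in[\alpha,\beta]$, with $m_F^{B_j},m_h^{B_j}\in L^p_{loc}\subset L^1_{loc}$, which yields the claim.

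With these bounds in hand, the map $g(t,y):=A(t)\,y+b(t)$ satisfies (C1), (C2) and (L) on $I_{f,x_0}$, the Lipschitz coefficient being $l(t)=|A(t)|\in L^1_{loc}$; hence $g$ is of Lipschitz Carath\'eodory type, and Theorem~\ref{thm:05.07-13:44} furnishes a unique maximal solution $y(\cdot)$ on some maximal interval $I_{g,y_0}\subseteq I_{f,x_0}$. The final step is to prove that in fact $I_{g,y_0}=I_{f,x_0}$, and here I would use the linear structure together with Gronwall's inequality: if, say, $b_{g,y_0}<b_{f,x_0}$, then on the compact interval $[0,b_{g,y_0}]$ both $\int|A|$ and $\int|b|$ are finite, and the estimate $|y(t)|\le\big(|y_0|+\int_0^{b_{g,y_0}}|b|\big)\exp\!\big(\int_0^{b_{g,y_0}}|A|\big)$ bounds $|y(\cdot)|$, contradicting the blow-up alternative of Theorem~\ref{thm:05.07-13:44}; the symmetric argument rules out $a_{g,y_0}>a_{f,x_0}$. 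Thus $y(\cdot)$ is defined on all of $I_{f,x_0}$, and $\big(x(\cdot),y(\cdot)\big)$ is the unique solution of \eqref{eq:system} on that interval. I expect the matching of the two maximal intervals, via the absence of finite-time blow-up for linear equations, to be the only genuinely delicate point; the rest is a direct application of the Carath\'eodory theory recorded in Theorem~\ref{thm:05.07-13:44}.
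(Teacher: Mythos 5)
Your argument is correct and is exactly the route the paper intends: the corollary is stated without proof as a consequence of Theorem~\ref{thm:05.07-13:44}, exploiting the triangular structure to reduce the second equation to a linear one with coefficients $A(t)=F\big(t,x(t)\big)$ and $b(t)=h\big(t,x(t)\big)$ in $L^1_{loc}$ on compact subintervals of $I_{f,x_0}$, and then ruling out blow-up before the endpoints of $I_{f,x_0}$ via Gronwall. Your identification of the interval-matching step as the only delicate point, and your handling of it, agree with the standard argument the authors are invoking.
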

\begin{defn}\label{def:THETAjseq}
Let $E\subset\LC$ with $L^1_{loc}$-equicontinuous $m$-bounds. For any $j\in\N$ and for any interval $I=[q_1,q_2]$, $q_1,q_2\in\Q$, define
\begin{equation*}
\theta^I_j(s):=
 \sup_{t\in I,f\in E}\int_t^{t+s}m_f^j(u)\, du\, .
\end{equation*}
where, for any $f\in E$, the function $m_f^j(\cdot)\in L^p_{loc}$ denotes the optimal $m$-bounds of $f$ on $B_j$. Notice that, since $E$ admits $L^1_{loc}$-equicontinuous $m$-bounds, then  $\Theta=\{\theta^I_j(\cdot)\mid I=[q_1,q_2],\, q_1,q_2\in\Q,\, j\in\N\}$ defines a suitable set of moduli of continuity.
\end{defn}
\begin{rmk}\label{rmk:THETAjfunc}
If $f\in\LC$ has $L^1_{loc}$-equicontinuous $m$-bounds we similarly define for any $B_j\subset\R^N$,
\begin{equation*}
\theta_j(s):= \sup_{t\in\R}\int_t^{t+s}m^j(u)\, du\, ,
\end{equation*}
where $m^j(\cdot)$ is the optimal $m$-bound for $f$ on $B_j$. Here again, notice that $\Theta=\{\theta^I_j(\cdot)\mid I=[q_1,q_2],\, q_1,q_2\in\Q,\, j\in\N\}$  defines a suitable set of moduli of continuity thanks to the $L^1_{loc}$-equicontinuity.
\end{rmk}
Now we prove several theorems of continuity assuming the existence of $L^1_{loc}$-equicontinuous $m$-bounds.
\begin{thm}\label{thm:ContODETthetaE}
Consider $E\subset\LC$  with $L^1_{loc}$-equicontinuous $m$-bounds and let $\Theta=\{\theta^I_j\mid I=[q_1,q_2],\,  q_1,q_2\in\Q,\,  j\in\N\}$ be the countable family of moduli of continuity in {\rm Definition~\ref{def:THETAjseq}}. With the notation of {\rm Theorem~\ref{thm:05.07-13:44}} and {\rm Corollary~\ref{cor:14:07-21:05}},
\begin{itemize}
\item[(i)] if $(f_n)_\nin$  in $E$ converges to $f$ in $(\LC,\T_\Theta)$ and $(x_{0,n})_\nin$ in $\R^N$ converges to $x_0\in\R^N$, then
    \[ \qquad x(\cdot,f_n,x_{0,n}) \xrightarrow{\nti}  x(\cdot,f,x_0)\]
     uniformly in any $[T_1,T_2]\subset I_{f,x_0}$;\par\vspace{0.05cm}
\item[(ii)] moreover, if $(F_n)_\nin$ in $\TC\big(\R^{N\times N}\big)$ converges to $F$ in $\big(\TC\big(\R^{N\times N}\big),\T_\Theta\big)$, $(h_n)_\nin$ in $\TC$ converges to $h$ in $(\TC,\T_\Theta)$, and  $( y_{0,n})_\nin$ in $\R^N$ converges to $ y_0\in\R^N$, then
    \[  \hspace{1cm} y(\cdot,f_n,F_n,h_n,x_{0,n}, y_{0,n})\xrightarrow{\nti} y(\cdot,f,F,h,x_0,  y_0)\]
uniformly  in any $[T_1,T_2]\subset I_{f,x_0}$.
\end{itemize}
\end{thm}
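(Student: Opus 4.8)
The plan is to recast both Cauchy problems in their integral (Volterra) form and then argue by compactness for part (i) and by a Grönwall estimate for part (ii). The structural fact that makes everything work, and which motivates Definition~\ref{def:THETAjseq}, is this: if $y(\cdot)$ solves $\dot y=g(t,y)$ with $g\in E$ and stays in $B_j$ on a subinterval of a rational interval $I$, then for $s\le t$ in that subinterval $|y(t)-y(s)|\le\int_s^t m_g^j(u)\,du\le\theta^I_j(t-s)$, so that the restriction of $y(\cdot)$ to $I$ lies in $\K_j^I$. This is precisely what allows the seminorms $p_{I,j}$ generating $\T_\Theta$ to be evaluated along the solutions themselves, turning $\T_\Theta$\nobreakdash-convergence of the vector fields into convergence along trajectories. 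Throughout I may assume, after enlarging $[T_1,T_2]$, that $0\in[T_1,T_2]\subset I_{f,x_0}$, and I will treat the forward direction, the backward one being symmetric.

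For part (i) I fix $[T_1,T_2]\subset I_{f,x_0}$, choose a rational interval $I\supset[T_1,T_2]$ and $j\in\N$ with $|x(t,f,x_0)|<j-1$ on $[T_1,T_2]$, and write $x_n:=x(\cdot,f_n,x_{0,n})$. The \textbf{main obstacle}, and the first step, is an \emph{a priori} bound: I claim that for $n$ large each $x_n$ is defined on all of $[T_1,T_2]$ and remains in $B_j$. Since $E$ carries only equicontinuous $m$\nobreakdash-bounds and no uniform control on the $l$\nobreakdash-bounds, the classical Lipschitz continuation is unavailable; instead I let $\sigma_n$ be the first exit time of $x_n$ from $B_j$ (set $\sigma_n=T_2$ if there is none), which is well defined because by Theorem~\ref{thm:05.07-13:44} the solution cannot blow up while it stays bounded. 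On $[0,\sigma_n]$ the $x_n$ share the modulus $\theta^I_j$, so extending them constantly past $\sigma_n$ gives a uniformly bounded, equicontinuous family; Ascoli--Arzel\`a produces a subsequence with $x_n\to w$ uniformly and $\sigma_n\to\sigma$. The limit $w$ solves the integral equation for $f$ on $[0,\sigma]$ (by the passage to the limit described next), hence $w=x(\cdot,f,x_0)$ there, so $|w(\sigma)|<j-1$; but if $\sigma<T_2$ then $|x_n(\sigma_n)|=j$ forces $|w(\sigma)|=j$, a contradiction. Thus $\sigma_n=T_2$ for $n$ large.

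With the uniform bound secured, $\{x_n\}$ is equicontinuous (modulus $\theta^I_j$) and uniformly bounded, so every subsequence has a uniformly convergent sub\nobreakdash-subsequence with some limit $z$. I pass to the limit in $x_n(t)=x_{0,n}+\int_0^t f_n(s,x_n(s))\,ds$ by splitting
\[
f_n(s,x_n(s))-f(s,z(s))=\big[f_n(s,x_n(s))-f(s,x_n(s))\big]+\big[f(s,x_n(s))-f(s,z(s))\big].
\]
Since $x_n\in\K_j^I$, the first bracket is dominated in $L^1(I)$ by $p_{I,j}(f_n-f)$ (H\"older on the bounded interval $I$), which tends to $0$ by $\T_\Theta$\nobreakdash-convergence; and since $f\in\TC$ (indeed $\LC\hookrightarrow\TC$, or by Theorem~\ref{thm:meas+conv=>ThetaC}) and $x_n\to z$ uniformly within $\K_j^I$, condition (T) annihilates the second bracket. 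Hence $z(t)=x_0+\int_0^t f(s,z(s))\,ds$, and uniqueness in Theorem~\ref{thm:05.07-13:44} gives $z=x(\cdot,f,x_0)$. As every subsequence yields the same limit, the whole sequence converges uniformly on $[T_1,T_2]$, proving (i).

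For part (ii) I set $A_n(t)=F_n(t,x_n(t))$, $b_n(t)=h_n(t,x_n(t))$ and define $A,b$ with $F,h,x$; these lie in $L^p(I)$ because $F_n,h_n\in\TC$ and $x_n\in\K_j^I$. The very same splitting-and-(T) argument, now using $\T_\Theta$\nobreakdash-convergence of $F_n\to F$ and $h_n\to h$ together with $x_n\to x$ uniformly in $\K_j^I$, shows $A_n\to A$ and $b_n\to b$ in $L^1(I)$; in particular $\sup_n\int_I|A_n|$ and $\sup_n\int_I|b_n|$ are finite, so Grönwall's inequality bounds the linear solutions $y_n$ uniformly on $I$. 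Writing $y_n-y$ through its integral equation and using $A_ny_n-Ay=A_n(y_n-y)+(A_n-A)y$, I obtain $|y_n(t)-y(t)|\le\ep_n+\int_0^t|A_n(s)|\,|y_n(s)-y(s)|\,ds$, where $\ep_n=|y_{0,n}-y_0|+\sup_I|y|\int_I|A_n-A|+\int_I|b_n-b|\to0$. A final application of Grönwall gives $\sup_I|y_n-y|\le\ep_n\exp\!\big(\sup_n\int_I|A_n|\big)\to0$, the desired uniform convergence.
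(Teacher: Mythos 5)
Your argument is correct and follows essentially the same route as the paper: truncation of the solutions at a first exit time, equicontinuity via the $L^1_{loc}$-equicontinuous $m$-bounds (so that the trajectories land in $\K_j^I$ and the seminorms $p_{I,j}$ can be evaluated along them), Ascoli--Arzel\`a, passage to the limit in the integral equation through the splitting $f_n(\cdot,x_n(\cdot))-f(\cdot,x_n(\cdot))$ plus $f(\cdot,x_n(\cdot))-f(\cdot,z(\cdot))$ controlled respectively by $\T_\Theta$-convergence and condition (T), and uniqueness to identify the limit. The only cosmetic difference is in part (ii), where you carry out the Gr\"onwall estimate for the linear nonhomogeneous equation explicitly, while the paper reduces to $L^1_{loc}$-convergence of the coefficients $\widetilde F_n,\widetilde h_n$ in exactly the same way and then cites Lemma IV.9 of Sell's book for that final step.
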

\begin{proof}
(i) We will prove the uniform convergence of $\big(x(\cdot,f_n,x_{0,n})\big)_\nin$ to  $x(\cdot,f,x_0)$ in $[0,T]$ for any $0< T<b_{f,x_0}$. The case $a_{f,x_0}<T<0$ is analogous. Denote
\begin{equation}
0<\rho =1+ \max\big\{ (|x_{0,n}|)_\nin,\  \|x(\cdot,f,x_0)\|_\infty\big\}\, ,
\label{eq:DefRHO}
\end{equation}
and define
\begin{equation*}
z_n(t)=
\begin{cases}
x(t,f_n,x_{0,n}), & \text{if $0\le t< T_n$,} \\
x(T_n,f_n,x_{0,n}), & \text{if $T_n\le t\le T$.}
\end{cases}
\end{equation*}
where $T_n=\sup\{t\in[0,T]\mid|x(s,f_n,x_{0,n})|\le \rho,\, \forall\, s\in[0,t]\}$. Notice that by \eqref{eq:DefRHO} and by the continuity of $\big(x(\cdot,f_n,x_{0,n})\big)_\nin$, we have that $T_n>0$ for any $n\in\N$. In particular notice that $(z_n(\cdot))_{\nin}$ is uniformly bounded. Moreover, consider $j\in\N$ so that $\rho<j$ and let $(m_n(\cdot))_{\nin}=(m_{f_n}^j(\cdot))_{\nin}$ be the sequence of optimal $m$-bounds of $(f_n)_{\nin}$ on $B_j$. Now, if $t_1,t_2\in[0,T_n)$, $t_1<t_2$, then
\begin{equation}
|z_n(t_1)-z_n(t_2)|\le \int_{t_1}^{t_2}\big|f_n\big(s,z_n(s)\big)\big|\, ds \le \int_{t_1}^{t_2}m_n(s)\, ds\, .
\label{eq:14.06-18:11}
\end{equation}
Fixed $\ep>0$,  since $E$ admits $L^1_{loc}$-equicontinuous $m$-bounds, there exists $\delta=\delta(T,\ep)>0$ such that, if $0\le t_1 \le t_2< T_n$, then the right-hand side in \eqref{eq:14.06-18:11} is smaller than $\ep$ whenever $t_2-t_1<\delta$. Notice that in facts the inequality $|z_n(t_1)-z_n(t_2)|<\ep$ is true on the whole interval $[0,T]$ whenever $t_2-t_1<\delta$ because in $[T_n,T]$ the difference on the left side of equation \eqref{eq:14.06-18:11} is zero. Thus, the sequence  $(z_n(\cdot))_{\nin}$ is equicontinuous. Then, Ascoli-Arzel\'a's theorem implies that $(z_n(\cdot))_{\nin}$ converges uniformly, up to a subsequence, to some continuous function $z:[0,T]\to \R^N$.
\par\vspace{0.05cm}
In order to conclude the proof, we prove that $z(\cdot)\equiv x(\cdot,f,x_0)$ in $[0,T]$. Define
\begin{equation}
T_0=\sup \{t\in[0,T]\mid |z(s)|<\rho-1/2\quad \forall \, s\in[0,t]\}\, ,
\label{eq:defT0}
\end{equation}
and notice that $T_0>0$ because  $(x_{0,n})_\nin$ converges to $x_0$ and $z(\cdot)$ is continuous.  Since $z_n(\cdot)$ converges uniformly to $z(\cdot)$ in $[0,T]$, then there exists $n_0\in\N$ such that if $n>n_0$, then
\begin{equation*}
|z_n(t)|<\rho-1/4\qquad \forall \, t\in[0,T_0]\, .
\end{equation*}
Therefore, for any $t\in[0,T_0]$  and for any $n>n_0$ one has  $z_n(t)=x(t,f_n,x_{0,n})$ and thus
\begin{equation}
z_n(t)=x_{0,n}+\int_0^tf_n\big(s,z_n(s)\big)\, ds\, ,\qquad t\in[0,T_0]\, ,\ n>n_0\, .
\label{eq:14.06-19:35}
\end{equation}
Now consider the compact set $\mathcal{K}=\{z_n(\cdot)\mid\nin\}\cup\{z(\cdot)\}\subset C\big([0,T],\R^N\big)$ and notice that $\mathcal{K}\subset\mathcal{K}^I_j$ for some $I=[q_1,q_2], \ q_1,q_2\in\Q$ and some $j\in\N$, up to an extension by constants of the functions in $\mathcal{K}$ to the whole interval $I$. Moreover, recall that $(f_n)_{\nin}$ converges to $f$ in $\T_\Theta$, $(z_n(\cdot))_{\nin}$ converges uniformly to $z(\cdot)$ in $[0,T]$ and $(x_{0,n})_{\nin}$ converges to $x_0$ as $\nti$. Then, passing to the limit in \eqref{eq:14.06-19:35}, we have that
 \begin{equation*}
z(t)=x_0+\int_0^tf\big(s,z(s)\big)\, ds\, ,\qquad t\in[0,T_0]\, .
\end{equation*}
In other words $z(t)$ is actually the solution of \eqref{eq:solLCE} in $[0,T_0]$. Additionally, it is easy to check that $|z(t)|\le\rho-1$. We prove that $T_0=T$ in order to conclude the proof. Otherwise, by \eqref{eq:defT0} and by the continuity of $z(\cdot)$, one would have $|z(T_0)|=|x(T_0,f,x_0)|=\rho-1/2$, which contradicts \eqref{eq:DefRHO}. Hence, $T_0=T$, as claimed, and thus for any  $t\in [0,T]$ we have that $x(t,f,x_0)= z(t)$ and $x(t,f_n,x_{0,n})= z_n(t)$ for any $\nin$, which concludes the proof of (i).\par\vspace{0.05cm}
(ii) The continuous dependence in the first component is given by part (i). In order to simplify the notation, let us denote by $x_n(\cdot)=x(\cdot,f_n,x_{0,n})$,  $y_n(\cdot)=y(\cdot,f_n,F_n,h_n,x_{0,n}, y_{0,n})$, $x(\cdot)=x(\cdot,f,x_0)$, and $y(\cdot)=y(\cdot,f,F,h,x_0,  y_0)$. Moreover, call $\widetilde F_n(t)=F_n(t,x_n(t))$, $\widetilde F(t)=F(t,x(t))$, $\widetilde h_n(t)=h_n(t,x_n(t))$ and $\widetilde h(t)=h(t,x(t))$. If we  prove that $(\widetilde F_n(\cdot))_\nin$ and $\widetilde h_n(\cdot))_\nin$ converge in $L^p_{loc}$ to $\widetilde F(\cdot)$ and $\widetilde h(\cdot)$ respectively, then we have the thesis applying Lemma IV.9 in \cite{book:GS} to the linear case. Therefore, let us fix an interval $I\subset \R$. Then,
\begin{equation*}
\begin{split}
\big\|\widetilde F_n(\cdot)-\widetilde F(\cdot)\big\|_p&=\normp{ F_n\big(\cdot,x_n(\cdot)\big)-F\big(\cdot, x(\cdot)\big)}{I}\\
&\le \normp{ F_n\big(\cdot,x_n(\cdot)\big)-F\big(\cdot, x_n(\cdot)\big)}{I}+\normp{ F\big(\cdot,x_n(\cdot)\big)-F\big(\cdot, x(\cdot)\big)}{I}\\
&\le \sup_{\xi\in\K^I_j}\normp{ F_n\big(\cdot,\xi(\cdot)\big)-F\big(\cdot, \xi(\cdot)\big)}{I}+\normp{ F\big(\cdot,x_n(\cdot)\big)-F\big(\cdot, x(\cdot)\big)}{I}\,
\end{split}
\end{equation*}
where $j\in\N$ is chosen as in part (i). When $\nti$, the right-hand side of the previous inequality goes to zero because $(F_n)_\nin$ converges to $F$ in $\TC\big(\R^{N\times N}\big)$ and $F\in\TC\big(\R^{N\times N}\big)$. Analogous reasonings apply to the sequence $(\widetilde h_n(\cdot))_\nin$.  Therefore, we have  the required $L^p_{loc}$ convergences and thus uniform convergence of the solutions of the nonhomogeneus linear equation.
\end{proof}
Let $f\in\LC$, $\Theta=(\theta_j)_{j\in\N}$ be a suitable set of moduli of continuity and consider the family of differential equations $\dot x =g(t,x)$, where $g\in\mathrm{Hull}_{(\LC,\T_\Theta)}(f)$. With the notation introduced in Theorem \ref{thm:05.07-13:44}, let us denote by $\U_1$ the subset of $\R\times\mathrm{Hull}_{(\LC,\T_\Theta)}(f)\times\R^N$ given by
\begin{equation*}
\U_1=\bigcup_{\substack{g\in\mathrm{Hull}_{(\LC,\T_\Theta)}(f)\,,\\x\in\R^N}}
\{(t,g,x)\mid t\in I_{g,x}\}\,.
\end{equation*}
Let $f\in\LC$, $F\in \TC(\R^{N\times N})$, $h\in \TC$ and consider the family of differential equations of type~\eqref{eq:system} for $(g,G,k)\in \Hu=\mathrm{Hull}_{\left(\LC\times \TC\times\TC,\T_\Theta\times\T_\Theta\times\T_\Theta\right)}(f,F,h)$, where the hull is constructed as in Definition~\ref{def:Hull}. Denote by $\U_2$ the subset of $\R\times\Hu\times\R^N\!\!\times\R^N$ given by
\begin{equation*}
\U_2=\bigcup_{\substack{(g,G,k)\in\Hu\,,\\ x_0\in\R^N}}\{(t,g,G,k,x_0,y_0)\mid t\in I_{g,x_0}\,,\; y_0\in\R^n\}\,.
\end{equation*}
With the previous notation we can state the following theorem.
\begin{thm}
Let $f\in\LC$ have $L^1_{loc}$-equicontinuous $m$-bounds and let $\Theta=(\theta_j)_{j\in\N}$ be the sequence of functions defined in {\rm Remark \ref{rmk:THETAjfunc}}.
\begin{itemize}
\item[(i)] The set $\U_1$ is open in $\mathrm{Hull}_{(\LC,\T_\Theta)}(f)\times\R^N$ and the map
\begin{equation*}
\begin{split}
\Phi_1\colon  \ \U_1\subset \R\times\mathrm{Hull}_{(\LC,\T_\Theta)}(f)\times\R^N\  &\to\ \ \mathrm{Hull}_{(\LC,\T_\Theta)}(f)\times\R^N\\
\ (t,g,x_0)\quad \qquad \qquad &\mapsto\qquad \big(g_t, x(t,g,x_0)\big)\,,\\
\end{split}
\end{equation*}
defines a local continuous skew-product flow on $\mathrm{Hull}_{(\LC,\T_\Theta)}(f)\times\R^N$.
\item[(ii)] The set $\U_2$ is open in $\R\times \Hu\times \R^N\times
\R^N$ and the map
\begin{equation*}
\begin{split}
\qquad\quad\;\;\Phi_2\colon \ \U_2\subset \R\times\Hu\times \R^N\!\!\times \R^N &\to\qquad\qquad\qquad\Hu\times \R^N \!\!\times \R^N\\
 (t,g,G,k,x_0, y_0) \  \ & \mapsto\;  \big(g_t, G_t, k_t ,x(t,g,x_0),y(t,g,G,k,x_0, y_0)\big)\,,\\
\end{split}
\end{equation*}
defines a local continuous skew-product flow on $\Hu\times \R^N\!\!\times \R^N$.
\end{itemize}
\label{thm:contFlowTtheta}
\end{thm}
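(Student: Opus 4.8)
The plan is to verify, for each of $\Phi_1$ and $\Phi_2$, the three defining features of a local continuous skew\nbd-product flow: the cocycle identity over the translation base flow, the openness of the domain, and joint continuity. The main ingredients will be Theorem~\ref{thm:ContODETthetaE} (continuous dependence on data), Theorem~\ref{thm:continuityBASE} with its corollary (continuity of the translation flow on the hull), and the fact, guaranteed by Corollary~\ref{prop:25.05-18:53}, that every $g\in\mathrm{Hull}_{(\LC,\T_\Theta)}(f)$ inherits $L^1_{loc}$\nbd-equicontinuous $m$\nbd-bounds. In fact, by the very choice of $\Theta$ in Remark~\ref{rmk:THETAjfunc} one has $\int_\tau^{\tau+s}m_g^j(u)\,du\le\theta_j(s)$ uniformly in $\tau\in\R$ and in $g$ in the hull, and this uniformity is what will make every forthcoming estimate independent of the point of the base.

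First I would settle the algebraic structure. The base map $(t,g)\mapsto g_t$ is a continuous flow on $\mathrm{Hull}_{(\LC,\T_\Theta)}(f)$ by the corollary to Theorem~\ref{thm:continuityBASE}. For the fibre component, if $x(\cdot,g,x_0)$ solves~\eqref{eq:solLCE} then $t\mapsto x(t+s,g,x_0)$ solves $\dot x=g_s(t,x)$ with value $x(s,g,x_0)$ at $t=0$; uniqueness (Theorem~\ref{thm:05.07-13:44}) then gives the cocycle identity $x(t+s,g,x_0)=x\big(t,g_s,x(s,g,x_0)\big)$ together with $x(0,g,x_0)=x_0$. The same translation-and-uniqueness argument applied to~\eqref{eq:system}, via Corollary~\ref{cor:14:07-21:05}, furnishes the cocycle property for the $y$\nbd-component, so that $\Phi_1$ and $\Phi_2$ are genuine skew\nbd-product cocycles.

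The hard part will be the openness of $\U_1$ (whence that of $\U_2$ will follow). Fix $(t_0,g_0,x_0)\in\U_1$, i.e. $t_0\in I_{g_0,x_0}$; choose $T_1<\min\{0,t_0\}$ and $T_2>\max\{0,t_0\}$ with $[T_1,T_2]\subset I_{g_0,x_0}$, and let $\rho$ bound $\|x(\cdot,g_0,x_0)\|_\infty$ on $[T_1,T_2]$. I would then adapt the truncation argument of Theorem~\ref{thm:ContODETthetaE}(i): for $(g,x)$ near $(g_0,x_0)$ form the stopped solutions on $[T_1,T_2]$, exploit the uniform bound $\int_\tau^{\tau+s}m_g^j(u)\,du\le\theta_j(s)$ (with $j>\rho$) to get equicontinuity, and conclude by Ascoli\nbd-Arzel\`a and uniqueness that the genuine solutions $x(\cdot,g,x)$ remain in $B_{\rho+1}$ and hence exist on all of $[T_1,T_2]$. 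Then $t_0\in I_{g,x}$ for all such $(g,x)$, so a neighbourhood of $(t_0,g_0,x_0)$ lies in $\U_1$. The essential difficulty is precisely that continuous dependence by itself controls the solutions only \emph{inside} the limiting maximal interval, so premature blow\nbd-up for perturbed data must be ruled out separately; the uniform equicontinuous $m$\nbd-bounds confine the orbits to a fixed ball and supply exactly this.

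Finally, for joint continuity of $\Phi_1$ I would take $(t_n,g_n,x_n)\to(t_0,g_0,x_0)$ in $\U_1$; the base component is continuous by the corollary to Theorem~\ref{thm:continuityBASE}, while by the openness just proved all the iterates are defined on a common compact interval, and the splitting
\begin{equation*}
\big|x(t_n,g_n,x_n)-x(t_0,g_0,x_0)\big|\le\big|x(t_n,g_n,x_n)-x(t_n,g_0,x_0)\big|+\big|x(t_n,g_0,x_0)-x(t_0,g_0,x_0)\big|
\end{equation*}
bounds the first summand by the uniform-on-compacta convergence of Theorem~\ref{thm:ContODETthetaE}(i) and the second by continuity of $t\mapsto x(t,g_0,x_0)$. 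This establishes (i). For (ii) I would argue in the same way: since the maximal interval of~\eqref{eq:system} coincides with $I_{g,x_0}$ by Corollary~\ref{cor:14:07-21:05}, the set $\U_2$ fibres over $\U_1$ with a free $\R^N$ factor in $y_0$ and is therefore open; the flow and cocycle identities carry over as above, and joint continuity of the $y$\nbd-component is delivered directly by Theorem~\ref{thm:ContODETthetaE}(ii).
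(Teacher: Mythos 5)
Your proof is correct and follows exactly the route the paper intends: the paper dispatches this theorem in one line as ``a direct consequence of Theorem~\ref{thm:ContODETthetaE} and Corollary~\ref{prop:25.05-18:53}'', and your argument is precisely the fleshed-out version of that citation (uniqueness for the cocycle identity, the truncation/equicontinuity argument for openness of $\U_1$, and the continuous-dependence theorem for joint continuity). Your explicit treatment of openness --- ruling out premature blow-up of nearby solutions via the uniform $m$-bound estimate inherited by the hull --- is the one genuinely non-trivial detail the paper leaves implicit, and you handle it correctly.
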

\begin{proof}
The proof is a direct consequence of Theorem \ref{thm:ContODETthetaE} and Theorem \ref{prop:25.05-18:53}.
\end{proof}
We conclude this part giving a theorem of existence of the solutions for differential problems whose vector fields are in $\TC$, i.e. not necessarily continuos in the space variables either. The underlying condition is that such vector fields are limit of sequences in $\SC$ with $L^1_{loc}$-equicontinuous $m$-bounds in the topology $\T_\Theta$, where $\Theta$ is the suitable set of moduli of continuity given in Definition~\ref{def:THETAjseq}.
\begin{thm}
Let $(f_n)_\nin$ be a sequence in $\SC$ with $L^1_{loc}$-equicontinuous $m$-bounds and $\Theta$ be the suitable set of moduli of continuity given in  {\rm Definition~\ref{def:THETAjseq}}. Assume that $(f_n)_\nin$ converges to some $f$ in $(\TC,\T_\Theta)$ and that $(x_{0,n})_\nin$ is a sequence in $\R^N$ converging to $x_0\in\R^N$. Then, denoting by $x_n(\cdot)$ a solution of the differential problem $\dot x= f_n(t,x)$ defined in the maximal interval $(a_n,b_n)$ and such that $0\in (a_n,b_n)$ and $x_n(0)=x_{0,n}$, we have
\begin{itemize}
\item[(i)] $\limsup_\nti a_n=a^*<0$, and $\liminf_\nti b_n=b^*>0$.
\item[(ii)] There exist $a^*<a<b<b^*$ and a continuous function $x(\cdot)$ such that, up to a subsequence,
\begin{equation*}
x_n(\cdot) \xrightarrow{\nti}  x(\cdot)
\end{equation*}
uniformly on the compact subsets of $(a,b)$.
\item[(iii)] For every $s,t\in(a,b)$, the function $x(\cdot)$ satisfies
\begin{equation*}
x(t)=x(s)+\int_s^tf\big(u,x(u)\big)\, du\, .
\end{equation*}
\end{itemize}
\end{thm}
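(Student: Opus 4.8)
The plan is to combine an a priori estimate coming from the $L^1_{loc}$-equicontinuous $m$-bounds with an Arzel\`a--Ascoli compactness argument, and then to pass to the limit in the integral formulation by exploiting simultaneously the $\T_\Theta$-convergence $f_n\to f$ and the defining continuity property (T) of the limit $f\in\TC$. I would begin by setting $\rho_0=\sup_\nin|x_{0,n}|$, which is finite since $x_{0,n}\to x_0$, fixing $j\in\N$ with $j>\rho_0+1$, and choosing a rational interval $I=[q_1,q_2]$ together with $\beta>0$ such that $[0,\beta]\subset I$ and $\theta^I_j(\beta)\le 1$; this is possible because $\theta^I_j$ (built in Definition~\ref{def:THETAjseq} from the optimal $m$-bounds $m^j_{f_n}$) is a continuous modulus with $\theta^I_j(0)=0$. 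For each $\nin$, as long as $|x_n(s)|\le j$ on $[0,t]$ one has $|f_n(s,x_n(s))|\le m^j_{f_n}(s)$, whence
\[
|x_n(t)|\le |x_{0,n}|+\int_0^t m^j_{f_n}(s)\,ds\le \rho_0+\theta^I_j(t)\le \rho_0+1<j .
\]
A standard continuation argument based on the blow-up alternative of Theorem~\ref{thm:05.07-13:44} then forces $b_n>\beta$ for every $\nin$, since otherwise $x_n$ would stay bounded up to $b_n$, contradicting $|x_n(t)|\to\infty$. The symmetric estimate on $[-\beta',0]$ gives $a_n<-\beta'<0$, so $\liminf_\nti b_n=b^*\ge\beta>0$ and $\limsup_\nti a_n=a^*\le-\beta'<0$, which is~(i).

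For (ii) I would pick $a,b$ with $a^*\le-\beta'<a<0<b<\beta\le b^*$, so that $[a,b]\subset(a_n,b_n)$ and $|x_n(t)|\le\rho_0+1<j$ on $[a,b]$ for every $\nin$. Moreover, for $t_1<t_2$ in $[a,b]$,
\[
|x_n(t_2)-x_n(t_1)|\le\int_{t_1}^{t_2}m^j_{f_n}(s)\,ds\le\theta^I_j(t_2-t_1),
\]
so $(x_n)_\nin$ is uniformly bounded and equicontinuous and, after extension by constants to $I$, each $x_n(\cdot)$ lies in the set $\K^I_j$ of Definition~\ref{def:TC} with common modulus $\theta^I_j$. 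Ascoli--Arzel\`a then yields a subsequence $(x_{n_k})_k$ converging uniformly on $[a,b]$ to a continuous $x(\cdot)$, which also belongs to $\K^I_j$; this proves (ii), the uniform convergence on the compact subsets of $(a,b)$ being immediate.

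Finally, for (iii) I would start from $x_{n_k}(t)=x_{n_k}(s)+\int_s^t f_{n_k}(u,x_{n_k}(u))\,du$ for $s,t\in(a,b)$, observe that the boundary terms converge by uniform convergence, and split
\begin{multline*}
\int_s^t f_{n_k}(u,x_{n_k}(u))\,du-\int_s^t f(u,x(u))\,du\\
=\int_s^t\big[f_{n_k}(u,x_{n_k}(u))-f(u,x_{n_k}(u))\big]\,du+\int_s^t\big[f(u,x_{n_k}(u))-f(u,x(u))\big]\,du .
\end{multline*}
The first integral is bounded by $\sup_{\xi\in\K^I_j}\int_I|f_{n_k}(u,\xi(u))-f(u,\xi(u))|\,du$, that is, by the $\T_\Theta$-seminorm $p_{I,j}(f_{n_k}-f)$ (working with $p=1$ as in this section), which vanishes as $k\to\infty$ precisely because every $x_{n_k}\in\K^I_j$ and $f_{n_k}\to f$ in $\T_\Theta$; the second integral vanishes by property (T) of $f\in\TC$, since $x_{n_k}\to x$ uniformly inside $\K^I_j$. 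Passing to the limit gives $x(t)=x(s)+\int_s^t f(u,x(u))\,du$ for all $s,t\in(a,b)$.

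The main obstacle is the first step: everything hinges on choosing $j$ and $\beta$ so that the a priori bound $\rho_0+\theta^I_j(\beta)$ remains \emph{strictly} below $j$, which at once rules out blow-up (yielding the uniform lower bound on $b_n$ in (i)) and confines all approximating trajectories to a single compact $\K^I_j$ with modulus $\theta^I_j$. It is exactly this membership in $\K^I_j$ that lets the $\T_\Theta$-seminorm and property (T) do their work in the limit passage of (iii); without it neither term in the split could be controlled. A secondary point to handle carefully is that one only needs \emph{some} admissible pair $(a,b)$, so the local estimate suffices and no uniform control on the whole of $(a^*,b^*)$ is required.
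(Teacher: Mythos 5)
Your proof is correct and follows essentially the same route as the paper's: an a priori bound derived from the $L^1_{loc}$-equicontinuous $m$-bounds (the paper phrases this via the truncated functions $z_n$ and defers to the argument of Theorem~\ref{thm:ContODETthetaE}, you phrase it via $\theta^I_j(\beta)\le 1$ upfront), then Ascoli--Arzel\`a, then passage to the limit in the integral equation by splitting into a $\T_\Theta$-seminorm term and a property~(T) term. The only cosmetic caveat is that the blow-up alternative you invoke is stated in Theorem~\ref{thm:05.07-13:44} for $\LC$, whereas here $f_n\in\SC$; the analogous continuation property for maximal Carath\'eodory solutions is standard and implicitly assumed in the statement, so this is not a gap.
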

\begin{proof}
We prove the existence of $x(\cdot)$ in $[0,b)$. The other case is analogous. Consider the constant
\begin{equation*}
0<\rho =1+ \max\big\{ |x_{0,n}|\mid\nin\big\}\, ,
\end{equation*}
and for every $\nin$ define $z_n\colon [0,\infty)\to\R^N$ by
\begin{equation*}
z_n(t)=
\begin{cases}
x_n(t), & \text{if $0\le t\le T_n$,} \\
x_n(T_n), & \text{if $T_n<t< \infty$.}
\end{cases}
\end{equation*}
where $T_n=\sup\{t\ge0\mid|x_n(s)|\le \rho\ \,\forall\, s\in[0,t]\}$. Then, the same arguments used in the proof of Theorem~\ref{thm:ContODETthetaE} provide a $T_0>0$, a subsequence of $\big(x_n(\cdot)\big)_\nin$, that we keep denoting with the same indexes, and a continuous function $x\colon [0,T_0]\to\R^N$ such that
\begin{equation*}
x_n(\cdot) \xrightarrow{\nti}  x(\cdot)\,,
\end{equation*}
uniformly in $[0,T_0]$, $|x(t)|<\rho-1/2$ for every $t\in[0,T_0]$, and moreover
\begin{equation}
x(t)=x(0)+\int_0^tf\big(s,x(s)\big)\, ds\, ,\qquad \text{for every $t\in[0,T_0]$}\,,
\label{eq:07.10_13:34}
\end{equation}
which means that $x(\cdot)$ is an absolutely continuous function solving the Carath\'eodory differential equation $\dot x=f(t,x)$, where $f\in\TC$. Notice that such a problem and the integral solution of \eqref{eq:07.10_13:34} are well-defined thanks to Proposition \ref{prop:coincideAE=>CoincideOnTheContinuous}. Finally, standard arguments of Carath\'eodory ODEs allow to extend the function $x(\cdot)$ to the maximal interval $(a,b)$.
\end{proof}
In the second part of the section, we will give several theorems of continuity of the solutions of Carath\'eodory differential systems whose vector fields are in a set $E\subset\LC$ admitting $L^p_{loc}$-bounded $l$-bounds. In particular, if $C\subset E$ is compact with respect to any of the considered topology, then the solutions of the differential equations of type \eqref{eq:solLCE}, where the vector fields belong to $C$, determine a suitable set of moduli of continuity.
\begin{thm}
Consider $E\subset\LC$  with $L^p_{loc}$-bounded $l$-bounds.
\begin{itemize}
\item[(i)] If $(f_n)_\nin$ in $E$ converges to $f$ in $(\LC,\T_D)$ and $(x_{0,n})_\nin$ in $\R^N$ converges to $x_0\in\R^N$, then
    \[ \qquad x(\cdot,f_n,x_{0,n}) \xrightarrow{\nti}  x(\cdot,f,x_0)\]
     uniformly in any $[T_1,T_2]\subset I_{f,x_0}$.
\item[(ii)] Let $C\subset E$ be compact with respect to $\T_D$ and, for any interval $I=[q_1,q_2]\subset\R$, with $q_1,q_2\in\Q$, and any $j\in\N$, define
\begin{equation}
C^I_j=\left\{x\colon J\to B_j\, \middle|\,  \begin{split}
J\subset I &\text{ interval, and } \\[-0.1cm]
 \exists \,f\in C&\ \text{such that}\ \forall \, s,t\in J\\[-0.15cm]
 x(t)= x(s)&+{\textstyle\int_s^tf(u,x(u))\, du }
\end{split} \right\}\,.
\label{eq:07.10_19:49}
\end{equation}
Then, each of the sets $C^I_j$ is equicontinuous and, denoted by $\theta^I_j$ its modulus of continuity, the set
\begin{equation*}
\hspace{1.3cm}\Theta=\left\{\theta^I_j\in C(\R^+,\R^+)\, \middle| \, \begin{split}
I=[q_1&,q_2]\subset\R,\text{ with } q_1,q_2\in\Q, \ j\in\N,\, \\
\theta^I_j &\text{ modulus of continuity of } C^I_j\\
\end{split}\right\}\,,
\end{equation*}
 is a suitable set of moduli of continuity.
\item[(iii)] Let $C\subset E$ be compact with respect to $\T_D$ and $\Theta$ be the suitable set of moduli of continuity given by {\rm (ii)}. If $(f_n)_\nin$ in $ C$ converges to $f$ in $(\LC,\T_D)$, $(F_n)_\nin$ in $ \TC\big(\R^{N\times N}\big)$ converges to $F$ in $\big(\TC\big(\R^{N\times N}\big),\T_\Theta\big)$, $(h_n)_\nin$ in $ \TC$ converges to $h$ in $(\TC,\T_\Theta)$, and  $(x_{0,n},  y_{0,n})_\nin$ in $\R^N\!\!\times\R^N$ converges to $(x_0, y_0)\in\R^N\!\!\times\R^N$, then
    \[  \qquad\quad \big(x(\cdot,f_n,x_{0,n}),y(\cdot,f_n,F_n,h_n,x_{0,n}, y_{0,n})\big)\to \big(x(\cdot,f,x_0),y(\cdot,f,F,h,x_0,  y_0)\big)\]
    as $n\to\infty$, uniformly  in any $[T_1,T_2]\subset I_{f,x_0}$.
\end{itemize}
\label{thm:ContODETpE}
\end{thm}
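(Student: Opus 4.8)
The plan is to treat the three parts in turn, following the scheme of Theorem~\ref{thm:ContODETthetaE} but systematically replacing every use of $L^1_{loc}$-equicontinuous $m$-bounds by an argument based on the $L^p_{loc}$-bounded $l$-bounds. For (i), I first note that, since $E$ has $L^p_{loc}$-bounded $l$-bounds, Theorem~\ref{thm:22.07-11:24} lets me read the convergence $f_n\to f$ in $\T_D$ as convergence in $\T_B$ (the $\T_D$-closure of $E$ still has $L^p_{loc}$-bounded $l$-bounds by Proposition~\ref{prop:07.07-19:44}, so the two topologies agree on it). Fixing $0<T<b_{f,x_0}$, I set $\rho=1+\max\{\sup_n|x_{0,n}|,\ \|x(\cdot,f,x_0)\|_\infty\}$ and introduce the truncations $z_n$ exactly as in Theorem~\ref{thm:ContODETthetaE}, so that $|z_n|\le\rho$ and $z_n$ solves $\dot z_n=f_n(t,z_n)$ up to its first exit time $T_n$. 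Granting the equicontinuity of $(z_n)_\nin$ discussed below, Ascoli--Arzel\`a produces a uniform limit $z$ along a subsequence, and passing to the limit in $z_n(t)=x_{0,n}+\int_0^t f_n(s,z_n(s))\,ds$ identifies $z$ with $x(\cdot,f,x_0)$: splitting the integrand as $[f_n(s,z_n(s))-f(s,z_n(s))]+[f(s,z_n(s))-f(s,z(s))]$ and choosing $j>\rho$, the first part is controlled by $\sup_{\xi\in C(I,B_j)}\|f_n(\cdot,\xi(\cdot))-f(\cdot,\xi(\cdot))\|_{L^1(I)}\to 0$ (here $f$ belongs to the $\T_D$-closure of $E$, hence to $\LC$ by Proposition~\ref{prop:25.05-10:53}), and the second by $\|z_n-z\|_\infty\int_I l^j(s)\,ds\to 0$. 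The standard bootstrap $T_0=T$ then finishes (i).

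The delicate point, where the hypothesis on $l$-bounds rather than $m$-bounds is felt, is the equicontinuity of $(z_n)_\nin$. I would not estimate $|f_n(s,z_n(s))|$ by an $m$-bound, but instead cover the compact ball $B_j$ by finitely many balls of radius $\delta$ centred at points $x_1,\dots,x_\nu\in D$, take a subordinate partition of unity $(\phi_i)_{i=1}^\nu$, and use $|f_n(s,z_n(s))-\sum_i\phi_i(z_n(s))f_n(s,x_i)|\le l^j_n(s)\,\delta$ to obtain, for $t_1<t_2$,
\[
|z_n(t_1)-z_n(t_2)|\le\sum_{i=1}^\nu\int_{t_1}^{t_2}|f_n(s,x_i)|\,ds+\delta\int_I l^j_n(s)\,ds\,.
\]
The last term is at most $\delta\,M$ with $M=\sup_n\int_I l^j_n<\infty$ (from the $L^p_{loc}$-boundedness of the $l$-bounds), so it falls below $\ep/2$ once $\delta$ is chosen small, \emph{before} fixing the $x_i$; the finite sum involves only the sequences $(f_n(\cdot,x_i))_\nin$, each converging in $L^p_{loc}\subset L^1_{loc}$ and hence $L^1_{loc}$-equicontinuous, so it is below $\ep/2$ once $t_2-t_1$ is small, uniformly in $n$. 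This yields equicontinuity for every $1\le p<\infty$ (for $p>1$ one may instead quote Remark~\ref{rmk:equicnt=>bound}).

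For (ii), the same partition-of-unity estimate, now applied to $x(t)-x(s)=\int_s^t f(u,x(u))\,du$ with $f\in C$, gives the equicontinuity of each $C^I_j$: the family $\{f(\cdot,x_i)\mid f\in C\}$ is the continuous image of the $\T_D$-compact set $C$, hence compact in $L^p_{loc}$ and therefore $L^1_{loc}$-equicontinuous, while $\sup_{f\in C}\int_I l^j_f<\infty$ by Proposition~\ref{prop:07.07-19:44}. Taking $\theta^I_j$ to be the modulus of continuity of $C^I_j$, these functions are non-decreasing, continuous, and vanish at $0$; the partial order $\theta^{I_1}_{j_1}\le\theta^{I_2}_{j_2}$ for $I_1\subseteq I_2$, $j_1\le j_2$ is immediate from the inclusion $C^{I_1}_{j_1}\subseteq C^{I_2}_{j_2}$. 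Hence $\Theta$ is a suitable set of moduli of continuity.

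For (iii), the $x$-component is supplied by (i). For the $y$-component I follow the linear scheme of Theorem~\ref{thm:ContODETthetaE}(ii): writing $\widetilde F_n(t)=F_n(t,x_n(t))$, $\widetilde F(t)=F(t,x(t))$, and similarly for $h$, it suffices to show $\widetilde F_n\to\widetilde F$ and $\widetilde h_n\to\widetilde h$ in $L^p_{loc}$, after which Lemma~IV.9 of \cite{book:GS} applied to the linear equation yields $y_n\to y$ uniformly. The structural heart of the statement is that $\Theta$ was constructed in (ii) exactly so that the solutions fall into the relevant sets: on any compact subinterval where $x_n$ and $x$ remain in $B_j$ they belong to $C^I_j\subset\K_j^I$, so the $\T_\Theta$-convergence $\sup_{\xi\in\K_j^I}\|F_n(\cdot,\xi(\cdot))-F(\cdot,\xi(\cdot))\|_{L^p(I)}\to 0$ dominates $\|F_n(\cdot,x_n(\cdot))-F(\cdot,x_n(\cdot))\|_{L^p(I)}$, whereas $\|F(\cdot,x_n(\cdot))-F(\cdot,x(\cdot))\|_{L^p(I)}\to 0$ follows from property (T) of $F\in\TC(\R^{N\times N})$ together with the uniform convergence $x_n\to x$; the argument for $h$ is identical. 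This reuse of the moduli coming from the solutions themselves is precisely what makes the $\T_\Theta$-hypotheses on $F$ and $h$ the right ones.
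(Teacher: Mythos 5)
Your proposal is correct, and its overall architecture (truncation $z_n$, Ascoli--Arzel\`a, identification of the limit via the integral equation and the bootstrap $T_0=T$ for (i); a finite covering argument for (ii); reduction to $L^p_{loc}$-convergence of $\widetilde F_n,\widetilde h_n$ plus Lemma~IV.9 of \cite{book:GS} for (iii), which matches the paper exactly) is the same as the paper's. Where you genuinely diverge is in the two equicontinuity steps. For (i), the paper estimates $\int_{t_1}^{t_2}|f_n(s,z_n(s))|\,ds\le\int_{t_1}^{t_2}|f_n-f|(s,z_n(s))\,ds+\int_{t_1}^{t_2}m^j_f(s)\,ds$, killing the first term for large $n$ by the $\T_B$-convergence (uniformly over the trajectories) and for the finitely many remaining indices by absolute continuity of the integral, and the second term by absolute continuity of the single optimal $m$-bound of the limit $f$; you instead re-run the partition-of-unity mechanism of Theorem~\ref{thm:22.07-11:24} inside the estimate, reducing to the countably many $L^p_{loc}$-convergent (hence $L^1_{loc}$-equicontinuous) sequences $\big(f_n(\cdot,x_i)\big)_\nin$ at points of $D$ plus the $L^p_{loc}$-bounded $l$-bounds, with $\delta$ fixed before the net. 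Similarly in (ii) the paper covers the compact $C$ by finitely many $p_j$-seminorm balls and uses the $m$-bounds of the centres, whereas you use $\T_D$-compactness of $C$ only through the compactness of the images $\{f(\cdot,x_i)\mid f\in C\}$ in $L^p_{loc}$. Both routes are sound; the paper's is slightly shorter because it leans on the already-established equivalence $\T_D=\T_B$ on $E$ and on the $m$-bound of the limit function, while yours is more self-contained in that it traces everything back to the raw hypotheses ($\T_D$-data at points of $D$ and $L^p_{loc}$-bounded $l$-bounds) and makes transparent why the $l$-bound hypothesis, rather than an $m$-bound hypothesis, suffices here. The only points worth tightening are cosmetic: the centres $x_i\in D$ of the $\delta$-net need not lie in $B_j$, so one should either pick them in $D\cap B_{j+1}$ and use $l^{j+1}_n$, or note the obvious adjustment; and the modulus $\theta^I_j$ obtained as a supremum should be replaced by a continuous non-decreasing majorant vanishing at $0$ (the paper glosses over this too).
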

\begin{proof}
(i) Since $E$ has $L^p_{loc}$-bounded $l$-bounds, by Theorem~\ref{thm:22.07-11:24} the convergence in $(\LC,\T_D)$ implies the convergence in $(\LC,\T_B)$. The proof closely follows the one given in Theorem~\ref{thm:ContODETthetaE}, with the exception that, instead of \eqref{eq:14.06-18:11}, now we have
\begin{equation}
\begin{split}
|z_n(t_1)-z_n(t_2)|&\le \int_{t_1}^{t_2}\big|f_n\big(s,z_n(s)\big)\big|\, ds\\
&\le  \int_{t_1}^{t_2}\big|f_n\big(s,z_n(s)\big)-f\big(s,z_n(s)\big)\big|\, ds+\int_{t_1}^{t_2}\big|f\big(s,z_n(s)\big)\big|\, ds\\
& \le \int_{t_1}^{t_2}\big|f_n\big(s,z_n(s)\big)-f\big(s,z_n(s)\big)\big|\, ds+ \int_{t_1}^{t_2}m_{f}^j(s)\, ds\, ,
\end{split}
\label{eq:27.06-21:16}
\end{equation}
where $m_{f}^j(\cdot)\in L^p_{loc}$ is the optimal $m$-bound for $f$ on $B_j$ and notation of Theorem~\ref{thm:ContODETthetaE} is used. Fixed $\ep>0$, due to the convergence of $(f_n)_\nin$ to  $f$ in $(\LC,\T_B)$, there exists an $n_0\in\N$ such that, if $n>n_0$, then
\begin{equation}
\sup_{{k\in\N}}\int_{t_1}^{t_2}\big|f_n\big(s,z_k(s)\big)-f\big(s,z_k(s)\big)\big|\, ds<\ep\, .
\label{eq:27.06-21:18}
\end{equation}
Notice that $\{z_k\mid k\in\N\}$ is a bounded set of continuous functions. On the other side, by the absolute continuity of the integral, there exists $\delta>0$ such that if $0<t_2-t_1<\delta$, then
\begin{equation}
 \int_{t_1}^{t_2}\big|f_n\big(s,z_n(s)\big)-f\big(s,z_n(s)\big)\big|\, ds<\ep \qquad \forall n=1,\dots,n_0\,,
\label{eq:27.06-21:25}
\end{equation}
and also
\begin{equation}
\int_{t_1}^{t_2}m_{f}^j(s)\, ds<\ep\,.
\label{eq:27.06-21:29}
\end{equation}
Gathering the inequalities \eqref{eq:27.06-21:16}, \eqref{eq:27.06-21:18}, \eqref{eq:27.06-21:25} and \eqref{eq:27.06-21:29}, we obtain a common modulus of continuity for all the functions in $\{z_i\mid i\in\N\}\cup\{z\}$. The rest of the proof follows the arguments of Theorem~\ref{thm:ContODETthetaE}.\par\vspace{0.05cm}
(ii) Let us fix $\ep>0$, $I=[q_1,q_2]\subset\R$, with $q_1,q_2\in\Q$, and $j\in\N$, and consider the following seminorm defined on $E$
\begin{equation*}
p_j(f)=\sup_{z\in C(I,B_j)}\int_I\big|f\big(t,z(t)\big)\big|\, dt,\qquad f\in E\,.
\end{equation*}
Moreover, for any $\widetilde f\in C$, denote by $U_{\ep/2}^j(\widetilde f)$ the following set
\begin{equation*}
U_{\ep/2}^j(\widetilde f) =\big\{f\in E\ \mid \ p_j\big(f-\widetilde f\, \big)\le \ep/2\big\}\,.
\end{equation*}
Therefore, by the compactness of $C$, there exist $\nu\in\N$ and $f_1,\dots, f_{\nu}\in C$ such that
\begin{equation*}
C\subset\bigcup_{i=1}^{\nu} U_{\ep/2}^j(f_i)\,.
\end{equation*}
For any $i=1,\dots,\nu$, denote by $m_i(\cdot)$ the $m$-bound of $f_i$ on $B_j$ and notice that there exists $\delta>0$ such that, if $s,t\in I$ and $|t-s|<\delta$, then
\begin{equation*}
\int_s^tm_i(u)\, du\le \ep/2\,, \quad \forall\, i=1,\dots,\nu\,.
\end{equation*}
Now, consider $x\colon J\to B_j$, with $x(\cdot)\in C^I_j$ and possibly extend it by constants to the whole interval $I$. Also, by the definition of $C^I_j$ in \eqref{eq:07.10_19:49}, $x(\cdot)$ determines $f\in C$ such that $x(t)= x(s)+\int_s^tf(u,x(u))\, du$ for every $s,t\in J$. Moreover, up to a reordering of the functions $f_1,\dots, f_\nu$ whose $\ep/2$-neighborhoods provide a covering of $C$, assume that $p_j(f-f_1)\le \ep/2$. Then, for any $s,t\in J$ with $|t-s|<\delta$ we have
\begin{equation*}
\begin{split}
|x(t)-x(s)|&\le \int_s^t\big|f\big(u,x(u)\big)\big|\, du\\
&=\int_s^t\big|f\big(u,x(u)\big)-f_1\big(u,x(u)\big)\big|\, du+\int_s^tm_1(u)\, du\\
&\le p_j(f-f_1)+\int_s^tm_1(u)\, du\le \ep\, .
\end{split}
\end{equation*}
Hence, from the arbitrariness of $x(\cdot)\in C^I_j$, one has that  the set $C^I_j$ is equicontinuous.\par\vspace{0.05cm}
(iii) The proof of part (iii) is equal to the one of part (ii) of Theorem \ref{thm:ContODETthetaE} with the exception that  now $\Theta$ is no more determined by the $m$-bounds of the functions in $E$ but as in the statement. Notice that everything is consistent, since for any $I=[q_1,q_2]\subset\R$, with $q_1,q_2\in\Q$, and for any $j\in\N$, we have that $C^I_j\subset \K^I_j$, where the functions in $C^I_j$ are possibly extended by constants to the whole interval $I$, as before.
\end{proof}
Next, we state the result of continuity of the skew-product flow for the topology $\T_D$. Notice that, in analogy with Theorem~\ref{thm:contFlowTtheta}, we provide a result for both systems like \eqref{eq:solLCE} and like \eqref{eq:system} in the respective hulls. However, a major difference in the assumptions of the second case occurs, that is, $\mathrm{Hull}_{(\LC,\T_D)}(f)$ is required to be compact, due to the fact that \ref{thm:ContODETpE}(iii) is used to obtain the result. Incidentally, recall that a characterization of compactness of $\mathrm{Hull}_{(\LC,\T_D)}(f)$ is given in Corollary \ref{cor:compactnessHULL(f)}.
\par
As before, let us set some notation first. Considered $f\in\LC$, let us denote by $\U_1$ the subset of $\R\times\mathrm{Hull}_{(\LC,\T_D)}(f)\times\R^N$ given by
\begin{equation*}
\U_1=\bigcup_{\substack{g\in\mathrm{Hull}_{(\LC,\T_D)}(f)\\x_0\in\R^N}} \{(t,g,x_0)\mid t\in I_{g,x_0}\}\,,
\end{equation*}
and, if $\Theta=(\theta_j)_{j\in\N}$ is a suitable set of moduli of continuity, $F\in\TC\big(\R^{N\times N}\big)$ and $h\in \TC$, let us denote by $\U_2$ the subset of $\R\times\Hu\times\R^N\!\!\times\R^N$, with $\Hu=\mathrm{Hull}_{\left(\LC\times \TC\times\TC,\T_D\times\T_\Theta\times\T_\Theta\right)}(f,F,h)$, given by
\begin{equation*}
\U_2=\bigcup_{\substack{(g,G,k)\in \Hu\\ x_0\in\R^N}}\{(t,g,G,k,x_0,y_0)\mid t\in I_{g,x_0}\,,\; y_0\in\R\}\,.
\end{equation*}
\begin{thm}
Let $f\in\LC$ have $L^p_{loc}$-bounded $l$-bounds.\begin{itemize}
\item[(i)]
The set $\U_1$ is open in $\R\times\mathrm{Hull}_{(\LC,\T_D)}(f)\times\R^N$ and the map
\begin{equation*}
\begin{split}
\Phi_1\colon \ \U_1\subset \R\times\mathrm{Hull}_{(\LC,\T_D)}(f)\times\R^N\  &\to\ \mathrm{Hull}_{(\LC,\T_D)}(f)\times\R^N\, ,\\
\ (t,g,x_0)\qquad \qquad \qquad &\mapsto\qquad \big(g_t, x(t,g,x_0)\big)\,,\\
\end{split}
\end{equation*}
defines a local continuous skew-product flow on $\mathrm{Hull}_{(\LC,\T_D)}(f)\times\R^N$.
\item[(ii)] Furthermore, if $\mathrm{Hull}_{(\LC,\T_D)}(f)$ is compact and $\Theta$  is the suitable set of moduli of continuity given by {\rm Theorem \ref{thm:ContODETpE}(ii)}, and if $F\in\TC\big(\R^{N\times N}\big)$ and $h\in \TC$, then the  set $\U_2$ is open in $\R\times \Hu\times \R^N\!\!\times \R^N$, and the map
\begin{equation*}
\begin{split}
\qquad\quad\Phi_2\colon  \ \U_2\subset \R\times\Hu\times\R^N\!\!\times\R^N  &\to\ \qquad\qquad\quad \Hu\times\R^N\!\!\times\R^N\\
\ (t,g,G,k,x_0, y_0)\quad &\mapsto\ \big(g_t, G_t, k_t ,x(t,g,x_0),y(t,g,G,k,x_0, y_0)\big)\,,\\
\end{split}
\end{equation*}
defines a local continuous skew-product flow on $\Hu\times\R^N\!\!\times \R^N$.
\end{itemize}
\end{thm}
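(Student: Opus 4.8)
The plan is to mirror the proof of Theorem~\ref{thm:contFlowTtheta}, deducing everything from the continuous-dependence result Theorem~\ref{thm:ContODETpE} together with the continuity of the base (translation) flow and the hereditary nature of the $l$-bounds. First I would record the preliminary fact that makes Theorem~\ref{thm:ContODETpE} applicable on the hull: since $f$ has $L^p_{loc}$-bounded $l$-bounds, the orbit $\{f_t\mid t\in\R\}$ has $L^p_{loc}$-bounded $l$-bounds by Definition~\ref{def:05.07-13:05}, and then Proposition~\ref{prop:07.07-19:44}(i) shows that its closure $\mathrm{Hull}_{(\LC,\T_D)}(f)$ again has $L^p_{loc}$-bounded $l$-bounds; by Corollary~\ref{cor:LP} this hull is genuinely a subset of $\LC$. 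Hence $E:=\mathrm{Hull}_{(\LC,\T_D)}(f)$ is a legitimate choice for the set appearing in Theorem~\ref{thm:ContODETpE}.

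For part~(i), I would verify the three defining features of a local continuous skew-product flow separately. The cocycle identity $\Phi_1(t+s,g,x_0)=\Phi_1\big(t,\Phi_1(s,g,x_0)\big)$ reduces to $(g_s)_t=g_{t+s}$ for the base and to $x(t,g_s,x(s,g,x_0))=x(t+s,g,x_0)$ for the fiber, the latter being the standard time-shift/uniqueness property of solutions of \eqref{eq:solLCE} furnished by Theorem~\ref{thm:05.07-13:44}. Continuity of the first component $g\mapsto g_t$ is the continuity of time translation on $(\SC,\T_D)$ restricted to the hull, i.e. the corollary to Theorem~\ref{thm:continuityBASE}. For the fiber component, given $(t_n,g_n,x_{0,n})\to(t,g,x_0)$ in $\U_1$, Theorem~\ref{thm:ContODETpE}(i) applied with the above $E$ gives $x(\cdot,g_n,x_{0,n})\to x(\cdot,g,x_0)$ uniformly on a compact interval $[T_1,T_2]$ containing $t$ in its interior, and splitting $|x(t_n,g_n,x_{0,n})-x(t,g,x_0)|$ through $x(t_n,g,x_0)$ converts this uniform-in-time convergence together with $t_n\to t$ into $x(t_n,g_n,x_{0,n})\to x(t,g,x_0)$. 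The openness of $\U_1$ is built into the same argument: fixing $T\in(t,b_{g,x_0})$, the construction in the proof of Theorem~\ref{thm:ContODETpE}(i) (and the blow-up alternative of Theorem~\ref{thm:05.07-13:44}) shows that for $(g',x_0')$ close to $(g,x_0)$ the solution remains defined on $[0,T]$, so a whole neighbourhood of $(t,g,x_0)$ lies in $\U_1$.

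For part~(ii) the structure is identical, the only genuinely new ingredient being the passage from $x$ to the pair $(x,y)$, which is governed by Theorem~\ref{thm:ContODETpE}(iii). To invoke it I would use the hypothesis that $C:=\mathrm{Hull}_{(\LC,\T_D)}(f)$ is compact: the first-coordinate projection of $\Hu=\mathrm{Hull}(f,F,h)$ is contained in $C$ (projections are continuous, so they send the product hull into the closure of $\{f_t\mid t\in\R\}$), the second and third coordinates move continuously in $\T_\Theta$ by Theorem~\ref{thm:continuityBASE}, and the prescribed $\Theta$ is exactly the suitable set of moduli of continuity produced by Theorem~\ref{thm:ContODETpE}(ii) from this compact $C$. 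Thus a convergent sequence in $\Hu$ feeds precisely the hypotheses of Theorem~\ref{thm:ContODETpE}(iii), yielding uniform-on-compacta convergence of both $x_n$ and $y_n$; the same $t_n\to t$ splitting as in part~(i) then gives continuity of $\Phi_2$. Openness of $\U_2$ follows from openness of $\U_1$ because, by Corollary~\ref{cor:14:07-21:05}, the maximal interval of the full system~\eqref{eq:system} coincides with $I_{g,x_0}$, so $\U_2$ is the preimage of $\U_1$ under the continuous projection forgetting $(G,k,y_0)$.

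The point requiring the most care is not analytic but structural: ensuring that the hull is an admissible input for Theorem~\ref{thm:ContODETpE}. For part~(i) this means checking that the hull inherits $L^p_{loc}$-bounded $l$-bounds so that it can serve as $E$, and for part~(ii) it is the compactness assumption on $\mathrm{Hull}_{(\LC,\T_D)}(f)$ that is indispensable, since Theorem~\ref{thm:ContODETpE}(iii) constructs the moduli of continuity $\Theta$ from the solutions associated with a compact family $C$, and without compactness neither the existence of $\Theta$ nor the applicability of part~(iii) is guaranteed. Everything else is the routine verification of the cocycle law and the standard conversion of uniform-on-compacta convergence, together with the blow-up alternative, into joint continuity and openness of the flow domain.
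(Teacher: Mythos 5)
Your proposal is correct and follows essentially the same route the paper intends: the paper dispatches this theorem by declaring it analogous to Theorem~\ref{thm:contFlowTtheta}, whose proof is in turn ``a direct consequence'' of the continuous-dependence theorem and the hereditary properties of the bounds on the hull, which is exactly the combination you assemble (Theorem~\ref{thm:ContODETpE} plus Proposition~\ref{prop:07.07-19:44}, Corollary~\ref{cor:LP}, and the continuity of translations). Your explicit attention to why the compact hull feeds Theorem~\ref{thm:ContODETpE}(ii)--(iii) in part~(ii), and to the projection argument for the openness of $\U_2$, fills in details the paper leaves implicit but introduces nothing structurally different.
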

The proof is similar to the proof of Theorem~\ref{thm:contFlowTtheta}, and thus skipped.
\section{The linearized skew-product semiflow}\label{sec:linearizedSPflow}
Let $E\subset\LC$, and assume that every $f\in E$ is continuously differentiable with respect to $x$ for a.e. $t\in\R$ and that $J_xf\in \SC(\R\times\R^N,\R^{N\times N})$, where $J_xf$ is the Jacobian of $f$ with respect to the coordinates $x$. The classic theory of Carath\'eodory ODEs  provides the differentiability of the solutions with respect to the initial conditions when the respective vector fields are in $E$ (see Kurzweil \cite{book:Kurz}). In this section, under an additional hypothesis on $E$, granting the existence of a suitable set of moduli of continuity $\Theta$, we extend such conclusions to the solutions of Carath\'eodory differential equations whose vector fields are in a subset of $\mathrm{cls}_{(\LC,\T_\Theta)}(E)$ and may possibly not admit continuous partial derivative with respect to $x$. In particular, we introduce new types of continuous linearized skew-product semiflow in the spaces of Carath\'eodory functions.
\begin{thm}
Consider $E_1\subset\LC$ with $L^1_{loc}$-equicontinuous $m$-bounds, assume that all the functions in $E_1$ are continuously differentiable with respect to $x$ for a.e. $t\in\R$ and, for any $f\in E_1$, assume that $J_xf\in \SC(\R\times\R^N,\R^{N\times N})$, where $J_xf$ is the Jacobian of $f$ with respect to the coordinates $x$. Let $\Theta$ be like in {\rm Definition~\ref{def:THETAjseq}} and consider
\begin{equation*}
E=\mathrm{cls}_{(\LC\times\TC,\T_\Theta\times \T_\Theta})\{(f,  J_xf)\mid f\in E_1\}\,.
\end{equation*}
For any $(g,G)\in E$, if $x(t,g,x_0)$ and $y(t,g,G,x_0, y_0)$ are respectively the solutions of the Cauchy Problems
\begin{equation*}
\begin{cases}
\dot x=g(t,x)\\
x(0)=x_0
\end{cases}\quad \text{and}\qquad
\begin{cases}
\dot y=G\big(t,x(t,g,x_0)\big)\,y\\
y(0)= y_0
\end{cases}
\end{equation*}
defined for $t\in [T_0,T_1]\subset I_{g,x_0}$ (maximal interval of definition), then we have that
\begin{equation*}
\lim_{\ep\to0^+}\left| \frac{x(t,g,x_0+\ep y_0)-x(t,g,x_0)}{\ep}-y(t,g,G,x_0, y_0)\right|=0\, ,
\end{equation*}
uniformly for $t\in [T_0,T_1]$ and $y_0 \in B_1$.
\label{thm:linskewprod}
\end{thm}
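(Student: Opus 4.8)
The plan is to prove that the difference quotient $w_\ep(t):=\ep^{-1}\big(x(t,g,x_0+\ep y_0)-x(t,g,x_0)\big)$ converges to $\eta(t):=y(t,g,G,x_0,y_0)$ by a Gr\"onwall estimate, using a sequence $(f_n)_\nin\subset E_1$ with $(f_n,J_xf_n)\to(g,G)$ in $\T_\Theta\times\T_\Theta$ merely as an auxiliary device that lets me exploit the $C^1$ regularity of the $f_n$. Write $\xi=x(\cdot,g,x_0)$ and $\xi_\ep=x(\cdot,g,x_0+\ep y_0)$, so that $w_\ep(0)=y_0=\eta(0)$. First I would record the a priori bounds: since $g\in\LC$, a Gr\"onwall estimate with the $l$-bound of $g$ on a fixed ball $B_j$ gives $\|\xi_\ep-\xi\|_\infty\le\ep|y_0|\exp(\int_I l^j_g)$, whence $\|w_\ep\|_\infty\le M$ uniformly for $y_0\in B_1$ and small $\ep$; moreover $\xi_\ep\to\xi$ uniformly on $[T_0,T_1]$ by Theorem~\ref{thm:ContODETthetaE}(i). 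As $g$ inherits $L^1_{loc}$-equicontinuous $m$-bounds through Proposition~\ref{prop:07.07-19:44}, for small $\ep$ all the relevant curves lie in one fixed $\K^I_{j'}$, their modulus of continuity being exactly the $\theta^I_{j'}$ built in Definition~\ref{def:THETAjseq}.

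The core step decomposes $\dot w_\ep=\ep^{-1}\big(g(\cdot,\xi_\ep)-g(\cdot,\xi)\big)$ by inserting $f_n$. For fixed $n$, the fundamental theorem of calculus for the $C^1$ map $f_n$ gives $\ep^{-1}\big(f_n(t,\xi_\ep)-f_n(t,\xi)\big)=A^\ep_n(t)\,w_\ep(t)$ with $A^\ep_n(t)=\int_0^1 J_xf_n\big(t,\xi+\sigma\ep w_\ep\big)\,d\sigma$, whereas $\dot\eta=G(\cdot,\xi)\eta$. Setting $R=w_\ep-\eta$ (so $R(0)=0$) yields
\[
\dot R=A^\ep_n R+\big(A^\ep_n-G(\cdot,\xi)\big)\eta+r^\ep_n,\qquad r^\ep_n:=\ep^{-1}\big[(g-f_n)(\cdot,\xi_\ep)-(g-f_n)(\cdot,\xi)\big].
\]
The decisive a priori fact is that $\int_I|A^\ep_n|\le\sup_{\zeta\in\K^I_{j'}}\int_I|J_xf_n(\cdot,\zeta)|=p_{I,j'}(J_xf_n)\le p_{I,j'}(G)+1=:L$ for $n$ large, a bound uniform in $n$, $\ep$ and $y_0$ coming directly from the $\T_\Theta$-convergence $J_xf_n\to G$ (note I use the $\sup$-of-integral seminorm, not $\int\sup$, so no extra boundedness of the $m$-bounds of $J_xf_n$ is needed). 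Gr\"onwall then gives $\sup_{[T_0,T_1]}|R|\le e^{L}\big[M'\!\int_I|A^\ep_n-G(\cdot,\xi)|+\int_I|r^\ep_n|\big]$ with $M'=\exp(p_{I,j'}(G))$ and $L$ independent of $n,\ep,y_0$ (I work with $p=1$ as in Section~\ref{secContFlowODEs}).

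The point of the estimate is to decouple the $n$- and $\ep$-dependence of the two source integrals. For the first I split $A^\ep_n-G(\cdot,\xi)=[A^\ep_n-J_xf_n(\cdot,\xi)]+[J_xf_n(\cdot,\xi)-G(\cdot,\xi)]$; integrating the second bracket over $I$ gives $\gamma_n\le p_{I,j'}(J_xf_n-G)\to0$, while for the first, inserting $G$ once more and using that $G\in\TC$ satisfies property~(T) along the uniformly convergent curves $\xi+\sigma\ep w_\ep\to\xi$, I obtain a bound $2\widetilde\gamma_n+\rho(\ep)$ with $\widetilde\gamma_n\le p_{I,j'}(J_xf_n-G)\to0$ as $\nti$ and $\rho(\ep):=\sup_{\sigma\in[0,1]}\int_I|G(\cdot,\xi+\sigma\ep w_\ep)-G(\cdot,\xi)|\to0$ as $\ep\to0^+$. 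The latter is independent of $n$ and, by a standard contradiction argument against (T) exploiting $\|\sigma\ep w_\ep\|_\infty\le\ep M$, holds uniformly for $y_0\in B_1$. For the remainder I only have the crude bound $\int_I|r^\ep_n|\le 2\ep^{-1}\sup_{\zeta\in\K^I_{j'}}\int_I|(g-f_n)(\cdot,\zeta)|=2\alpha_n/\ep$, with $\alpha_n\to0$.

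This last term is the main obstacle: the factor $\ep^{-1}$ blocks any bound uniform in $\ep$ for fixed $n$, and neither the smallness of $g-f_n$ nor a Lipschitz bound alone controls it. The resolution is that the Gr\"onwall inequality holds for \emph{every} $n$, while its left-hand side $\sup|R|$ does \emph{not} depend on $n$. Hence, given $\delta>0$, I first pick $\ep^*$ so that $\rho(\ep)<\delta$ for $\ep<\ep^*$, and then for each such $\ep$ choose $n=n(\ep)$ large enough that simultaneously $\gamma_{n(\ep)},\widetilde\gamma_{n(\ep)}<\delta$ and $\alpha_{n(\ep)}<\delta\ep$ (possible since $\alpha_n\to0$ for the now-fixed $\ep$). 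This gives $\sup_{[T_0,T_1]}|w_\ep-\eta|\le Ce^{L}\delta$ for all $\ep<\ep^*$ and all $y_0\in B_1$, and letting $\delta\to0$ yields the asserted uniform limit. Existence and uniqueness of $\xi$ and $\eta$ are supplied by Theorem~\ref{thm:05.07-13:44} and Corollary~\ref{cor:14:07-21:05} (the linear equation for $\eta$ being well posed because $G\in\TC$); in particular, the argument never invokes the classical differentiability-in-initial-data theorem, which is the reason it extends to the non-differentiable limits $g$.
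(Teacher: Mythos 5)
Your proof is correct in its essentials, but it follows a genuinely different route from the paper's. The paper first establishes the classical statement for each $f\in E_1$ (fundamental theorem of calculus plus Gronwall applied to $J_xf$ along the intermediate curves), converts it into the integral mean-value identity $x(t,f_n,x_1)-x(t,f_n,x_2)=\int_0^1y\big(t,f_n,J_xf_n,\alpha x_1+(1-\alpha)x_2,x_1-x_2\big)\,d\alpha$, passes that identity to the limit $(g,G)$ by invoking the uniform continuity on compacts of $(t,h,H,x_0,y_0)\mapsto(x,y)$ from Theorem~\ref{thm:ContODETthetaE}, and finally reads off the derivative by letting $\ep\to0$ inside the integral. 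This completely sidesteps the singular remainder: no term of order $\ep^{-1}\|g-f_n\|$ ever appears. You instead run a single Gronwall estimate directly at the limit pair $(g,G)$, using the $f_n$ only to supply the pointwise linearization $A^\ep_n$, and you neutralize the $2\alpha_n/\ep$ remainder by the order-of-quantifiers trick (choose $n=n(\ep)$ with $\alpha_{n(\ep)}<\delta\ep$ after fixing $\ep$), which is legitimate because the left-hand side of your Gronwall inequality does not depend on $n$. What your route buys is a self-contained, quantitative bound $\sup|w_\ep-\eta|\lesssim\rho(\ep)$ in terms of the $L^1$-modulus of continuity of $G$ along the flow, and it never needs the classical differentiability theorem or the full joint continuity of the skew-product as black boxes; what the paper's route buys is modularity (it reuses Theorem~\ref{thm:ContODETthetaE} wholesale) and the identity \eqref{eq:27.07-12:40}, which is of independent interest. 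Two points in your write-up deserve one more line of justification: (a) the claim that the convex combinations $(1-\sigma)\xi+\sigma\xi_\ep$ lie in $\K^I_{j'}$ for the $\theta^I_{j'}$ of Definition~\ref{def:THETAjseq} requires checking that the $m$-bound of the limit $g$ still satisfies $\int_t^{t+s}m^{j'}_g\le\theta^I_{j'}(s)$, which follows from the vague-convergence construction of Proposition~\ref{prop:mbounds} but is not automatic; and (b) the compositions $G(\cdot,\zeta_\sigma)$ are well defined for the equivalence class $G\in\TC$ only via Proposition~2.6, which you use implicitly through Corollary~\ref{cor:14:07-21:05}. Neither is a gap, just detail to be supplied.
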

\begin{proof}
For any $f\in E_1$, the result  is classic and can be found in \cite{book:Kurz} for example. For the sake of completeness, however, we include a proof of it. Let us fix $f\in E_1$ and simplify the notation denoting by $x(\cdot,x_0)=x(\cdot,f,x_0)$, and by $y(\cdot,x_0,y_0)=y(\cdot,f,J_xf,x_0, y_0)$; by the definition of solution, we have that
\begin{align*}
\frac{x(t,x_0+\ep y_0)-x(t,x_0)}{\ep}=y_0 +\frac{1}{\ep}\int_0^t\!\!\left[f\!\big(s,x(s,x_0+\ep y_0)\big)-f\!\big(s,x(s,x_0)\big)\right]ds\\
= y_0+\int_0^t\!\!\left(\int_0^1 \!\!J_xf\!\big(s,\xi_{\ep}(s,\alpha)\big)\, d\alpha\right)\frac{x(s,x_0+\ep y_0)-x(s,x_0)}{\ep}\, ds ,
\end{align*}
where $\xi_{\ep}(s,\alpha)=x(s,x_0)+\alpha\,[x(s,x_0+\ep y_0)-x(s,x_0)]$ is determined by the fundamental theorem of calculus. Furthermore, by definition
\begin{equation*}
y(t,x_0, y_0)= y_0+\int_0^tJ_xf\big(s,x(s,x_0)\big)\, y(s,x_0, y_0)\, ds\, .
\end{equation*}
Therefore, if $0\le t\leq T_1$ one has
\begin{align}
\nonumber &\left| \frac{x(t,x_0+\ep y_0)-x(t,x_0)}{\ep}-y(t,x_0, y_0)\right|\\
\nonumber &\;\le \int_0^t\bigg|\!\left(\int_0^1J_xf\big(s,\xi_\ep(s,\alpha)\big)\, d\alpha\right)\left[\frac{x(s,x_0+\ep y_0)-x(s,x_0)}{\ep}- y(s,x_0, y_0)\right]\!\bigg|\, ds\\
&\quad+\int_0^t\bigg|\left[\int_0^1\Big(J_xf\big(s,\xi_\ep(s,\alpha)\big)\,-J_xf\big(s,x(s,x_0)\big)\Big) d\alpha\right] y(s,x_0, y_0)\bigg|\, ds\,.
\label{eq:25.07-11:23}
\end{align}
Denote by $\eta_\ep(t,x_0,  y_0) $ the integral
\begin{equation*}
\int_0^t\bigg|\left[\int_0^1\Big(J_xf\big(s,\xi_\ep(s,\alpha)\big)\,-J_xf\big(s,x(s,x_0)\big)\Big) d\alpha\right] y(s,x_0, y_0)\bigg|\, ds\,,
\end{equation*}
and notice that,  if $\ep\to0$, then $\eta_\ep(t,x_0,  y_0)\to 0$ uniformly in $y_0\in B_1$ and $t\in[T_0,T_1]$ since $ y(\cdot,x_0, y_0)$ is bounded, for Theorem~\ref{thm:ContODETthetaE}, for $t\in[T_0, T_1]$ and $y_0\in B_1$, and since $J_xf\big(s,\xi_\ep(s,\alpha)\big)$ converges to $J_xf\big(s,x(s,x_0)\big)$ uniformly in $\alpha\in[0,1]$, $y_0\in B_1$ and $t\in[T_0,T_1]$ as $\ep\to0$. Moreover, recalling how $\xi_\ep(s,\alpha)$ is defined and once again thanks to Theorem~\ref{thm:ContODETthetaE}, we know that there exists $j\in \N$ such that $\|\xi_\ep(s,\alpha)\|_\infty<j$ for every $\ep\le1$, every $s\in[T_0,T_1]$ and every $\alpha\in [0,1]$. Thus, denoting by $m^j(\cdot)$  the optimal $m$-bound on $B_j$ for $J_xf$, we in particular have that $\int_0^1\|J_xf(s,\xi_\ep(s,\alpha))\|\, d\alpha\leq m^j(s)$. Then, from~\eqref{eq:25.07-11:23}, we deduce that
\begin{multline*}
\left| \frac{x(t,x_0+\ep y_0)-x(t,x_0)}{\ep}-y(t,x_0, y_0)\right| \\ \le \eta_\ep(t,x_0,y_0)+\int_0^t m^j(s) \left| \frac{x(s,x_0+\ep y_0)-x(s,x_0)}{\ep}-y(s,x_0, y_0)\right|\,ds\,,
\end{multline*}
and  applying Gronwall's inequality  we get
\begin{multline}\label{eq:24.10-11:40}
\left|\frac{x(t,x_0+\ep y_0)-x(t,x_0)}{\ep}-y(t,x_0, y_0)\right|\\
\begin{split}
&\le  \eta_\ep(t,x_0,  y_0)+ \int_0^t\eta_\ep(s,x_0,  y_0)\, m^j(s) \, \exp\left(\int_s^t \!\!\!m^j(r)\, dr\right)ds\\
&\le \eta_\ep(t,x_0, y_0)+ c\,\int_0^{T_1}\eta_\ep(s,x_0, y_0)\, m^j(s)ds\,,
\end{split}
\end{multline}
where the positive constant $c$ satisfies $c\ge\exp\left(\int_0^{T_1}\!\!\!m^j(r)\, dr\right)ds$. Notice that, as $ \ep\to 0 $, the right-hand side of \eqref{eq:24.10-11:40} vanishes  uniformly for $t\in[0,T_1]$ and $y_0 \in B_1$. A similar inequality for $T_0\le t\le 0$ yields to
\begin{equation}\label{eq:linCLASSIC}
\lim_{\ep\to0^+}\frac{x(t,f,x_0+\ep y_0)-x(t,f,x_0)}{\ep}=y(t,f,J_xf,x_0, y_0)\,,
\end{equation}
uniformly in $t\in[T_0,T_1]$ and $y_0 \in B_1$, which proves the result for $f\in E_1$.\par\vspace{0.05cm}
Now consider $(g,G)\in E$ and let $(f_n)_\nin$ be a sequence of functions in $E_1$ such that $(f_n,J_xf_n)_\nin$ converges to $(g,G)$ in $\T_\Theta$. From \eqref{eq:linCLASSIC}, we have that  for any $\nin$ and for any $x_1,x_2\in\R^N$, with $|x_1-x_2|\le 1$, the following equality holds
\begin{equation}
x(t,f_{n},x_1)-x(t,f_{n},x_2)=\int_0^1y(t,f_n,J_xf_n, \alpha\,x_1+(1-\alpha)\,x_2, x_1-x_2)\,  d\alpha\, .
\label{eq:27.07-12:31}
\end{equation}
Moreover, let $C$ be the set $\{(f_n,J_xf_n)\mid\nin\}\cup(g,G)$, and let $B$ be a closed ball in $\R^N\!\times\R^N$ containing $\{(x,y)\in\R^N\!\!\times\R^N\mid |x-x_0|\le1,y\in B_1\}$. Then, if $[T_0,T_1]\subset I_{g,x_0}$, thanks to Theorem~\ref{thm:ContODETthetaE}, we have that the application from $[T_0,T_1]\times C\times B$ into $\R^N\!\!\times\R^N$ defined by
\begin{equation*}
(t,h,H, x_0, y_0)\mapsto \big(x(t,h,x_0), y(t,h,H, x_0, y_0)\big)\, ,
\end{equation*}
is uniformly continuous and bounded on $[T_0,T_1]\times C\times B$. Thus, we have that, as $\nti$, equation \eqref{eq:27.07-12:31} becomes
\begin{equation}
x(t,g,x_1)-x(t,g,x_2)=\int_0^1y(t,g,G, \alpha\,x_1+(1-\alpha)\,x_2,x_1-x_2)\,  d\alpha\, .
\label{eq:27.07-12:40}
\end{equation}
Eventually, if in \eqref{eq:27.07-12:40} we consider $x_2=x_0$ and $x_1=x_0+\ep  y_0$, where $\ep\le1$ and $y_0\in B_1$. Then, one has
\begin{multline*}
\left|\frac{x(t,g,x_0+\ep y_0)-x(t,g,x_0)}{\ep}-y(t,g ,G,x_0, y_0)\right|\\
=\left|\frac{1}{\ep}\int_0^1y(t,g,G, x_0+\alpha\ep y_0,\ep y_0)\,  d\alpha-y(t,g ,G,x_0, y_0)\right|\\
\le\int_0^1|y(t,g,G, x_0+\alpha\ep y_0, y_0)-y(t,g ,G,x_0, y_0)|\,  d\alpha \, .
\end{multline*}
Then, applying Theorem~\ref{thm:ContODETthetaE} once again, when $\ep\to 0$, and reasoning as before, we obtain the thesis.
\end{proof}
\begin{defn}
Let $f\in \LC$ be continuously differentiable with respect to $x$ for a.e. $t\in\R$ and with $L^1_{loc}$-equicontinuous $m$-bounds. Let $\Theta$ be defined as  in Remark~\ref{rmk:THETAjfunc} and denote by $J_xf\in \SC\big(\R^{N\times N}\big)$ the Jacobian of $f$ with respect to the coordinates $x$ and let us denote by $\Hu=\mathrm{Hull}_{(\LC\times\TC,\T_\Theta\times\T_\Theta)}(f,  J_xf)$. If $\U$ is the subset of $\R\times\Hu\times\R^N\!\!\times\R^N$ given by
\begin{equation*}
\U=\bigcup_{\substack{(g,G)\in\Hu\\ x_0\in\R^N}}\left\{ (t,g,G,x_0,y_0)\mid t\in I_{g,x_0},\,y_0\in\R^N\right \}\,,
\end{equation*}
then, we call  \emph{a linearized skew-product semiflow} the map
\begin{equation*}
\begin{split}
\Psi\colon \U\subset \R\times\Hu\times\R^N\!\!\times\R^N\quad&\to\quad\ \qquad\qquad\Hu\times\R^N\!\!\times\R^N\\
(t,g,G, x_0, y_0)\qquad\quad&\mapsto\quad \big(g_t,G_t, x(t,g,x_0), y(t,g,G, x_0, y_0)\big)\, ,
\end{split}
\end{equation*}
\end{defn}
The use of the name ``linearized skew-product semiflow'' is meaningful  because, according to Theorem~\ref{thm:linskewprod} we have that $\partial x(t,g,x_0)/\partial x_0\cdot  y_0= y(t,g,G,x_0, y_0)$ for every $(g,G)\in\Hu$ and every $t\in I_{g,x_0}$, and therefore  in particular when $G\in\TC\big(\R^{N\times N}\big)\setminus\SC\big(\R^{N\times N}\big)$, i.e. when $g$ does not have continuous partial derivatives with respect to $x$ for almost every $t\in\R$.
\par\vspace{0.05cm}
Next we give a simple example, when $N=1$, exhibiting such a phenomenon.
\begin{exmpl}
Consider the continuous function $H\colon \R\to\R$ such that $H(t)=0$ if $t<0$ and, for any $\nin$, $H(t)$ is defined in the interval $[4n,4n+4]$ as follows:
\begin{equation*}
H(t)=
\begin{cases}
(1+n)(t-4n), & \text{if $t\in I^1_n=\left[4n,4n+\frac {1}{n+1}\right]$,} \\[.2cm]
1, & \text{if $t\in I^2_n=\left[4n+\frac {1}{n+1},4n+2-\frac {1}{n+1}\right]$,} \\[.2cm]
-(1+n)(t-4n-2), & \text{if $t\in I^3_n=\left[4n+2-\frac {1}{n+1}, 4n+2+\frac {1}{n+1}\right]$,} \\[.2cm]
-1, & \text{if $t\in I^4_n=\left[4n+2+\frac {1}{n+1}, 4n+4-\frac {1}{n+1}\right]$,} \\[.2cm]
(1+n)(t-4n-4), & \text{if $t\in I^5_n=\left[ 4n+4-\frac {1}{n+1},4n+4\right]$} \,.\\[.2cm]
\end{cases}
\end{equation*}
Notice that as $\nti$ the measures of $I^1_n$, $I^3_n$ and $I^5_n$ go to zero, whereas the measures of $I^2_n$ and $I^4_n$ go to $2$. Thus, if we consider the sequence of translations of $H$ given by $\big(H_{4k}(\cdot)\big)_{k\in\N}$, we have that
\begin{equation*}
H_{4k}(t)\xrightarrow{k\to\infty} \overline H(t)=
\begin{cases}
0 , & \text{if $t=4n$,} \\
1, & \text{if $t\in (4n,4n+2)$,} \\
0 , & \text{if $t=4n+2$,} \\
-1, & \text{if $t\in (4n+2, 4n+4)$,}\\
\end{cases}\qquad n\in\Z\,,\ \forall \, t\in\R\,.
\end{equation*}
\begin{figure}[]
\centering
\includegraphics[width=\textwidth]{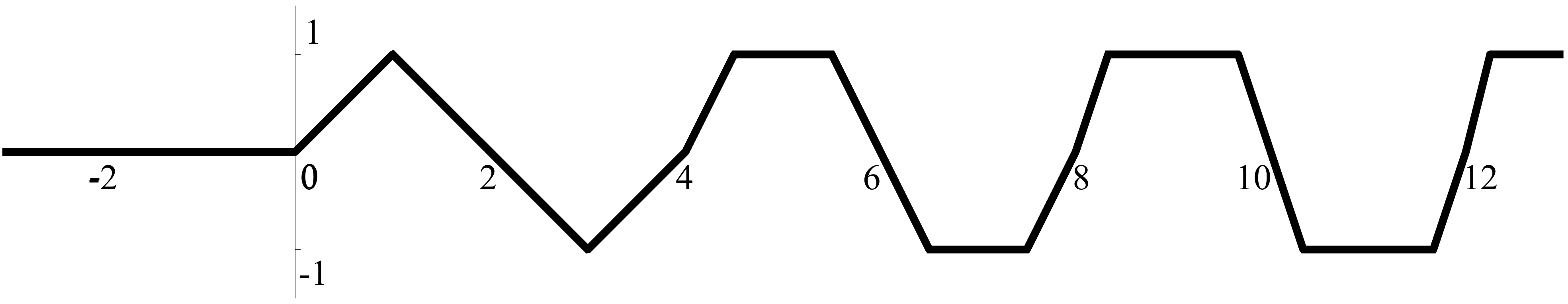}
\caption{The function $H(t)$.}
\end{figure}
\begin{figure}[]
\centering
\includegraphics[width=\textwidth]{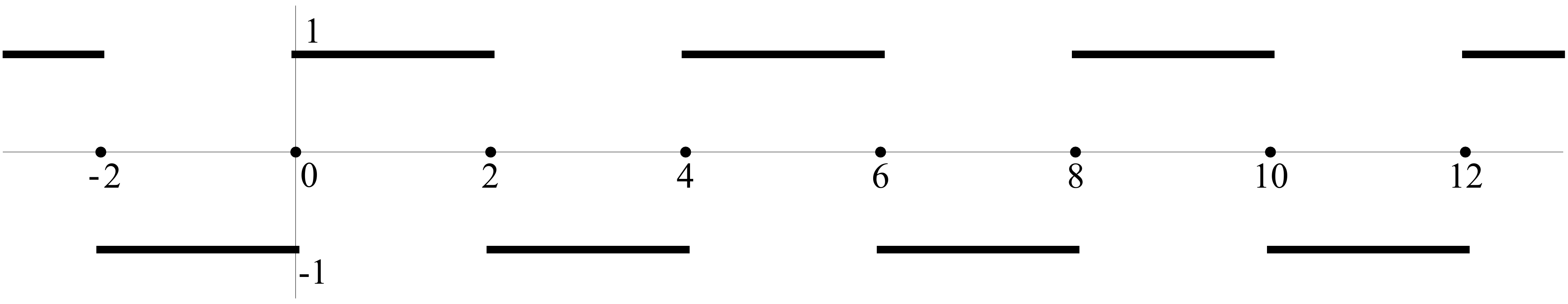}
\caption{The function $\overline H(t)$.}
\end{figure}
Now consider the function $h\colon\R\to\R$ defined by
\begin{equation*}
h(t)=\int_0^t H(s)\,ds\,.
\end{equation*}
Notice that $h\in C^1(\R)$, $|h(t)|\le 2$ and $|h'(t)|\le1$ for any $t\in\R$; consequently, $h$ has Lipschitz constant equal to $1$. Therefore, the $\cls\{h_\tau(\cdot)\mid\tau\in\R\}$ is compact in $C(\R)$ with the usual norm and $(h_{4k}(\cdot))_{k\in\N}$ converges uniformly on compact sets, to some bounded and Lipschitz function $\overline h$. It is easy to check that $\overline h(t)=\int_0^t\overline H(s)\, ds$. Then, consider the functions $f\in\LC$ and $F\in\SC$ defined by
\begin{equation*}
f(t,x)=h\left(t+\frac{x}{3}\right)\qquad\text{and}\qquad F(t,x)=\frac{1}{3}\, H\left(t+\frac{x}{3}\right)\,.
\end{equation*}
Moreover, taking into account the unique modulus of continuity $\theta(t)=2t$ calculated like in Remark \ref{rmk:THETAjfunc}, let us consider the $\mathrm{Hull}_{(\LC\times\TC,\T_\Theta\times\T_\Theta)}(f,  F)$ where, according to the notation of Section \ref{secContFlowODEs}, we may write $F=J_xf$.
\par
Now let us consider the following family of differential systems whose vector fields are in the $\mathrm{Hull}_{(\LC\times\TC,\T_\Theta\times\T_\Theta)}(f,  F)$,
\begin{equation}
\begin{cases}
\dot x=f_{4k}(t,x)\\
\dot y= F_{4k}\big(t,x(t)\big)\, y
\end{cases}\ ,\quad k\in\N\,.
\label{eq:28.07-13:27}
\end{equation}
One can easily check that, for any $k\in\N$, the second differential equation in \eqref{eq:28.07-13:27} is the variational equation of the first one, evaluated along the solution $x(t)$ of the first equation. Moreover, since $h_{4k}(t)\to\overline h(t)$ uniformly on compact sets as $k\to\infty$, then $f_{4k}(t,x)\to g(t,x)$ in $\T_D$ as $k\to\infty$, where $g(t,x)=\overline h\left(t+x/3\right)$. Actually, since $f$ satisfies the hypothesis of Corollary \ref{cor:HullT1=HullT2}, the convergence $f_{4k}(t,x)\to g(t,x)$ holds for any of the considered topologies. Furthermore, we claim  that $F_{4k}\to G$ in $\T_\theta$, where $G(t,x)=(1/3)\, \overline H\left(t+x/3\right)$. Indeed, for any compact interval $I\subset\R$, for any $j\in\N$, and for any $z(\cdot)\in C(I,B_j)$ with $\theta(t)=2t$ as modulus of continuity,  we have
\begin{equation*}\begin{split}
\int_I\big|F_{4k}\big(&t,z(t)\big)-G\big(t,z(t)\big)\big|^pdt=\frac{1}{3^p}\int_I\left|H_{4k}\left(t+\frac{z(t)}{3}\right)-\overline H\left(t+\frac{z(t)}{3}\right)\right|^pdt\\
&\le3^{1-p}\int_I\left|H_{4k}\left(t+\frac{z(t)}{3}\right)-\overline H\left(t+\frac{z(t)}{3}\right)\right|^p\left|1+\frac{z'(t)}{3}\right|dt\\
&\le\frac{1}{3^p}\int_{I+[-j/3,j/3]}\left|H_{4k}\left(s\right)-\overline H\left(s\right)\right|^pds\,,
\end{split}
\end{equation*}
where, in the first inequality, $z'(t)$ is the derivative almost everywhere of $z(t)$, whose existence is granted by the fact that $z(t)$ is Lipschitz, and we use the fact that  $1/3\le|1+z'(t)/3|$ for every $t\in I$. Moreover, the theorem of change of variables for the measurable case (see Hewitt and Stromberg  \cite[Theorem 20.5]{book:HS}) has been used in the last inequality. Therefore, since $(H_{4k}(s))_{k\in\N}$ converges almost everywhere to $\overline  H(s)$, the Lebesgue theorem of dominated convergence gives us the result. Finally, notice that $G(t,x)$ is a Borel function. Hence, thanks to Theorem \ref{thm:meas+conv=>ThetaC}, we have that $G\in\TC$, and it is straightforward to see that $G\notin\SC$.
\end{exmpl}


\begin{thebibliography}{99}
\bibitem{book:LA} \textsc{L. Arnold}: \emph{Random Dynamical Systems}, \textrm{Springer-Verlag, Berlin, Heidelberg, 1998.}
\bibitem{paper:ZA1} \textsc{Z. Artstein}: Topological dynamics of an ordinary differential equation, \emph{J. Differential Equations} \textbf{23} (1977), 216--223.
\bibitem{paper:ZA2} \textsc{Z. Artstein}: Topological dynamics of ordinary differential equations and Kurzweil equations, \emph{J. Differential Equations} \textbf{23} (1977), 224--243.
\bibitem{paper:ZA3} \textsc{Z. Artstein}: The limiting equations of nonautonomous ordinary differential equations, \emph{J. Differential Equations} \textbf{25} (1977), 184--202.
\bibitem{paper:AW} \textsc{B. Aulbach, T. Wanner}: Integral manifolds for Carath\'eodory type differential equations in Banach spaces, \emph{Six Lectures on Dynamical Systems}
(B. Aulbach \& F. Colonius eds), World Scientific, Singapore, 1996, 45--119.
\bibitem{paper:SBRS} \textsc{S.I. Bodin, R.J. Sacker}: A new approach to asymptotic diagonalization of linear differential systems, \emph{J. Dynam. Differential Equations} \textbf{12} (2000), 229--245.
\bibitem{book:CLR} \textsc{A. Carvalho, J.A. Langa, J. Robinson}: \emph{Attractors for infinite-dimensional non-au\-tono\-mous dynamical systems}, \textrm{Springer-Verlag New York, 2013.}
\bibitem{book:CR} \textsc{R.E. Castillo, H. Rafeiro}: \emph{An Introductory Course In Lebesgue Spaces}, \textrm{Springer, Switzerland, 2016.}
\bibitem{paper:CHLE} \textsc{S.-N. Chow, H. Leiva}: Dynamical spectrum for time dependent linear systems in Banach spaces, \emph{Japan J. Indust. Appl. Math.} \textbf{11} (1994), 379--415.
\bibitem{book:CL} \textsc{E.A. Coddington, N. Levinson}: \emph{Theory Of Ordinary Differential Equations}, \textrm{McGraw-Hill, New York, 1955.}
\bibitem{paper:AJH} \textsc{A.J. Heunis}: Continuous dependence of the solutions of an ordinary differential equation, \emph{J. Differential Equations} \textbf{54} (1984), 121--138.
\bibitem{book:HS} \textsc{E. Hewitt, K. Stromberg}: \emph{Real and abstract analysis. A modern treatment of the theory of functions of a real variable.} Second printing corrected, \textrm{Springer-Verlag, New York, 1969.}
\bibitem{book:JONNF} \textsc{R. Johnson, R. Obaya, S. Novo, C. Nu\~nez, R. Fabbri}: \emph{Nonautonomous Linear Hamiltonian Systems: Oscillation, Spectral Theory And Control.}, Developments in Mathematics \textbf{36}, \textrm{Springer, Switzerland, 2016.}
\bibitem{book:Kall} \textsc{O. Kallenberg}: \emph{Random Measures.} Third Edition, \textrm{Akademie-Verlag, Berlin, 1983.}
\bibitem{book:Kurz} \textsc{J. Kurzweil}: \emph{Ordinary Differential Equations},  Studies in Applied Mechanics \textbf{13}, \textrm{Elsevier, Amsterdam, 1986.}
\bibitem{book:RMGS} \textsc{R.K. Miller, G. Sell}: Volterra Integral Equations and Topological Dynamics, \emph{Mem. Amer. Math. Soc.}, no. 102, \textrm{Amer. Math. Soc., Providence, 1970.}
\bibitem{paper:RMGS1} \textsc{R.K. Miller, G. Sell}: Existence, uniqueness and continuity of solutions of integral equations, \emph{Ann. Math. Pura Appl.} \textbf{80} (1968), 135--152;  Addendum: ibid. \textbf{87} (1970), 281--286.
\bibitem{paper:LWN} \textsc{L.W. Neustadt}: On the solutions of certain integral-like operator equations. Existence, uniqueness and dependence theorems, \emph{Arch. Rational Mech. Anal.} \textbf{38} (1970), 131--160.
\bibitem{paper:CPMR} \textsc{C. P\"otzsche, M. Rasmussen}: Computation of integral manifolds for Carath\'{e}odory differential equations, \emph{J. Numer. Anal.} \textbf{30} (2010), 401--430.
\bibitem{paper:GS1} \textsc{G. Sell}: Compact sets of nonlinear operators, \emph{Funkcial. Ekvac.} \textbf{11} (1968), 131--138.
\bibitem{book:GS} \textsc{G. Sell}: \emph{Topological Dynamics and Ordinary Differential Equations}, \textrm{Van Nostrand-Reinhold, London, 1971.}
\bibitem{paper:WSYY} \textsc{W. Shen, Y. Yi}: Almost automorphic and almost periodic dynamics in skew-product semiflows, \emph{Mem. Amer. Math. Soc.} \textbf{136}, no. 647, \textrm{Amer. Math. Soc., Providence,  1998}.
\bibitem{paper:SS} \textsc{S. Siegmund}: Dichotomy spectrum for nonautonomous differential equations, \emph{J. Dynam. Differential Equations} \textbf{14} (2002), 243--258.
\end{thebibliography}
\end{document}